\pdfoutput=1
%
\documentclass[onefignum,onetabnum]{siamart190516}

\usepackage{amsmath} 
\usepackage{amsfonts}
\usepackage{amssymb}
\usepackage{multirow}
\usepackage{stmaryrd}
\usepackage{subcaption}
\usepackage{makecell}
\usepackage{cite}
\usepackage{boldline}
\usepackage{graphics}
\usepackage{epstopdf}
\usepackage{mathtools}

\newcommand{\vphi}{\varphi} 
\newcommand{\veps}{\varepsilon}

\newcommand{\tspan}{\mathrm{span}}

\newcommand{\cC}{\mathcal{C}}
\newcommand{\cS}{\mathcal{S}}
\newcommand{\cD}{\overline{\mathcal{V}}}
\newcommand{\cF}{\mathcal{F}}
\newcommand{\cV}{\mathcal{V}}
\newcommand{\cW}{\mathcal{W}}

\newcommand{\cJ}{\mathcal{J}}

\newcommand{\cO}{\mathcal{O}}

\newcommand{\cT}{\mathcal{T}}
\newcommand{\bA}{\mathbf{A}}
\newcommand{\bB}{\mathbf{B}}
\newcommand{\bC}{\mathbf{C}}
\newcommand{\bD}{\mathbf{D}}

\newcommand{\bI}{\mathbf{I}}
\newcommand{\bK}{\mathbf{K}}
\newcommand{\bL}{\mathbf{L}}
\newcommand{\bM}{\mathbf{M}}
\newcommand{\bP}{\mathbf{P}}
\newcommand{\bQ}{\mathbf{Q}}
\newcommand{\bR}{\mathbf{R}}
\newcommand{\bS}{\mathbf{S}}
\newcommand{\bU}{\mathbf{U}}
\newcommand{\bV}{\mathbf{V}}
\newcommand{\bX}{\mathbf{X}}

\newcommand{\br}{\mathbf{r}}

\newcommand{\bPsi}{\mathbf{\Psi}}
\newcommand{\bPhi}{\mathbf{\Phi}}
\newcommand{\bSigma}{\mathbf{\Sigma}}

\newcommand{\sign}{\text{sign}}
\newcommand{\quand}{\quad \text{and} \quad}

\newcommand{\sig}[1]{\sigma_{\mathrm{#1}}}
\newcommand{\zero}{\mathrm{zero}}
\newcommand{\lm}{\mathrm{lm}}
\newcommand{\rlm}{\mathrm{rlm}}

\newcommand{\lb}{\llbracket}
\newcommand{\rb}{\rrbracket}

\newtheorem{THM}{Theorem}[section]

\newtheorem{LEM}{Lemma}[section]


\usepackage{lipsum}
\usepackage{amsfonts}
\usepackage{graphicx}
\usepackage{epstopdf}
\usepackage{algorithmic}
\ifpdf
  \DeclareGraphicsExtensions{.eps,.pdf,.png,.jpg}
\else
  \DeclareGraphicsExtensions{.eps}
\fi


\newsiamremark{remark}{Remark}
\newsiamremark{hypothesis}{Hypothesis}
\crefname{hypothesis}{Hypothesis}{Hypotheses}
\newsiamthm{claim}{Claim}

\headers{Low-memory $S_N$-DG methods for
	radiative transport}{Z. Sun and C. D. Hauck}

\title{Low-memory, discrete ordinates, discontinuous Galerkin methods for
	radiative transport%
	\thanks{This material was based, in part, upon work supported by the DOE Office of Advanced Scientific Computing Research.  ORNL is operated by UT-Battelle, LLC., for the U.S. Department of Energy under Contract DE-AC05-00OR22725.
		This research is supported in part by an appointment with the NSF Mathematical Sciences
		Summer Internship Program sponsored by the National Science Foundation, Division of Mathematical
		Sciences (DMS). This program is administered by the Oak Ridge Institute for Science and Education (ORISE)
		through an interagency agreement between the U.S. Department of Energy (DOE) and NSF. ORISE is
		managed by ORAU under DOE contract number DE-SC0014664.
		The United States Government retains and the publisher, by accepting the article for publication, acknowledges that the United States Government retains a non-exclusive, paid-up, irrevocable, world-wide license to publish or reproduce the published form of this manuscript, or allow others to do so, for the United States Government purposes. The Department of Energy will provide public access to these results of federally sponsored research in accordance with the DOE Public Access Plan (\texttt{http://energy.gov/downloads/doe-public-access-plan}).}
}

\author{Zheng Sun\thanks{Department of Mathematics, The Ohio State University, Columbus, OH  43210, USA. (\email{ sun.2516@osu.edu}}) \and Cory D.
	Hauck\thanks{Computer Science and Mathematics Division, Oak Ridge National
		Laboratory, Oak Ridge, TN 37831, USA. (\email{hauckc@ornl.gov.}) }}

\usepackage{amsopn}


\newsiamremark{example}{Example}
\newsiamremark{REM}{Remark}
\usepackage{cleveref}
\begin{document}

\maketitle 

\begin{abstract}
The discrete ordinates discontinuous Galerkin ($S_N$-DG) method is a well-established 
and practical approach for solving the radiative transport equation. In this paper, we study a low-memory variation of the upwind $S_N$-DG method. The proposed method uses a smaller finite element space that is constructed by coupling spatial unknowns across collocation angles, thereby yielding an approximation with fewer degrees of freedom than the standard method.  Like the original $S_N$-DG method, the low memory variation still preserves the asymptotic diffusion limit and maintains the characteristic structure needed for mesh sweeping algorithms. While we observe second-order convergence in scattering dominated, diffusive regime, the low-memory method is in general only first-order accurate. To address this issue, we use upwind reconstruction to recover  second-order accuracy. For both methods, numerical procedures based on upwind sweeps are proposed to reduce the system dimension in the underlying Krylov solver strategy.
\end{abstract}
\begin{keywords}
	Radiative transport, discrete ordinates, discontinuous
	Galerkin, diffusion limit	
\end{keywords}
\begin{AMS}
	65N35, 65N22, 65F50, 35J05
\end{AMS}
\section{Introduction}\label{sc-intr}
\setcounter{equation}{0}
\setcounter{figure}{0}
\setcounter{table}{0}

Radiative transport equations \cite{agoshkov2012boundary,Davison-1957,Pomraning-1973,Lewis-Miller-1984,Mihalis-Mihalis-1999,graziani2006computational,Dautray-Lions-2000} describe the flows of particles, such as photons, neutrons, and electrons, as they pass through and interact with a background medium. These equations are used
in various applications, including astrophysics and nuclear reactor analysis.

In this paper, we consider the scaled, steady-state, linear transport equation
\begin{subequations}\label{eq-main}
	\begin{alignat}{3}
	\Omega \cdot \nabla \Psi(\Omega,x) + \left(\frac{\sig{s}(x)}{\veps}+\veps
	\sig{a}(x)\right) \Psi(\Omega,x) &=
	\frac{\sig{s}(x)}{\veps}\overline{\Psi}(x)
	+ \veps q(x), & \quad 
	&(\Omega,x)\in  S \times D,\\
	\Psi(\Omega,x) &= \alpha(\Omega,x), & &(\Omega,x)\in \Gamma^{-}.
	\end{alignat}
\end{subequations}
Here $D\subset \mathbb{R}^d$ $(d=1,2,3)$ is an open, bounded, and Lipschitz domain;  $S$ is the projection of the unit sphere in $\mathbb{R}^3$ into $\mathbb{R}^d$ (the interval $[-1,1]$ for $d = 1$ and  unit disk for $d = 2$); and $\Gamma^{-} = \{ (x,\Omega) \in S\times\partial
D\mid\Omega\cdot n(x)<0\}$, where $n(x)$ is the outward unit normal vector at any point $x \in \partial D$ where the boundary is $C^1$.   

The \textit{angular flux}
$\Psi$ is the flux of particles at the location $x$ moving with unit speed in the direction $\Omega$, and the \textit{scalar flux} $\overline{\Psi} = \frac{1}{|S|}\int_{S}\Psi d\Omega$ is the average of $\Psi$ over $S$.%
\footnote{Often the quantity $\Phi=4\pi\overline{\Psi}$ is referred to as the scalar flux.  The difference is simply a normalization factor from integration of the sphere.  Here, we borrow the convention used in \cite{Lewis-Miller-1984}.} 
The  functions $\sig{s}$ and $\sig{a}$ are (known) non-dimensionalized scattering and absorption 
cross-sections, respectively, and $q$ is a (known) non-dimensionalized source.  
The function 
$\alpha(\Omega,x)$ is the (known) incoming flux at $x\in \partial D$ moving in the direction $\Omega$.
The constant $\veps>0$ is a scaling parameter which characterizes the relative strength of scattering.

Designing effective numerical methods for \eqref{eq-main} is a serious 
challenge, and the intent of this paper is to address two of the main issues.
Firstly, for a three-dimensional problem, the unknown intensity $\Psi$ is a function 
of three spatial and two angular variables; the discretization of this five-dimensional phase space
usually requires significant computational resources. Secondly,
when the parameter $\veps$ is small, $\Psi$ is nearly independent of $\Omega$ and can be approximated by the solution of a diffusion equation in the variable $x$ only \cite{Habetler-Matkowsky-1975,bardos1984diffusion,bensoussan1979boundary}.  That is, away from the boundary, $\Psi(\Omega,x) = \Psi^{(0)} (x) + O(\veps)$ as $\veps \to 0$, where $\Psi^{(0)} $ satisfies 
\begin{equation}\label{eq-difflim}
-\nabla\cdot\left(\frac{1}{3\sig{s}}\nabla \Psi^{(0)} (x)\right) + \sig{a}
\Psi^{(0)}(x) = q(x), \quad x \in  D,
\end{equation}
along with appropriate boundary conditions.
A numerical method for \eqref{eq-main} should preserve this asymptotic limit without having to resolve the length scales associated with $\veps$ \cite{jin1999efficient}.  In other words, in the limit $\veps \to 0$, a discretization of the transport equation \eqref{eq-main} should become a consistent and stable discretization of the diffusion equation \eqref{eq-difflim}.  Otherwise
a highly refined mesh is needed to approximate the solution accurately \cite{larsen1987asymptotic}.%
\footnote{{This issue is also known as ``locking" in the elliptic literature \cite{babuvska1992locking}.}}

Classical approaches for discretizing \eqref{eq-main} often involve separate
treatment of the angular and spatial variables, and a variety of options are available. 
Among them, the $S_N$-DG method \cite{HHE2010, larsen1989asymptotic,adams2001discontinuous} has received significant attention due to it's robustness, computational efficiency, and convenient implementation.   The $S_N$ method (see\cite{larsen2010advances} for a substantial review and additional references) is a collocation method in which the angular variable $\Omega$ is discretized into a finite number of directions and a quadrature rule is used to evaluate $\overline{\Psi}$.
The $S_N$ discretization preserves non-negativity of $\Psi$ and can incorporate the boundary conditions from \eqref{eq-main} in a straightforward way.  It also preserves the characteristic structure of the advection operator in \eqref{eq-main}, which allows for the use of fast sweeping techniques for inverting the discrete form of the operator on the left-hand side of \eqref{eq-main}.  

Discontinuous Galerkin (DG) methods are a class of finite element methods that construct numerical solutions using piecewise
polynomial spaces.  The DG approach was introduced in \cite{reed1973triangular} for the express purpose of solving equations like \eqref{eq-main}, followed shortly thereafter by a rigorous analysis in \cite{lesaint1974finite}.  Since then, DG methods have been applied to nonlinear 
hyperbolic conservation laws and convection-dominated problems
\cite{cockburn2001runge}, elliptic problems \cite{arnold2002unified},
and equations with higher-order derivatives \cite{yan2002local,xu2010local}.
When used with upwind fluxes, DG methods preserve the characteristic structure of \eqref{eq-main} that enables sweeps.  Moreover, if the approximation space can support globally continuous linear polynomials, then DG methods with upwind fluxes will yield accurate numerical solutions for $\Psi$ without the need to resolve $\veps$ with the spatial mesh  \cite{larsen1989asymptotic, adams2001discontinuous, guermond2010asymptotic}.  However, this condition on the approximation space means that at least $P^1$ elements  must be used for a triangular mesh and $Q^1$ elements for a rectangular mesh.%
\footnote{{This condition can be circumvented for non-upwind methods. In \cite{ragusa2012robust}, the authors made the piecewise constant DG method asymptotic preserving with parameters adjusting numerical fluxes under different regimes. Similar techniques were introduced in finite volume contexts \cite{jin1996numerical} as well and were recently used in \cite{guermond2019positive} to develop a positive, asymptotic preserving method.} }

In order to reduce memory costs in the upwind $S_N$-DG method, while still preserving the asymptotic diffusion limit and maintaining the characteristic structure needed for sweeps, we propose in this paper to couple the finite element spaces between different collocation angles in the discrete ordinate approximation. Since the solution becomes isotropic
in the diffusion limit ($\veps \to 0$), we hypothesize that only a $P^1$ (for triangles) or $Q^1$ (for rectangles)  approximation of the angular average is necessary.  Thus, instead of using a tensor product
finite element space for the $S_N$-DG system, we seek the solution in a proper subspace, 
in which all the elements have isotropic slopes. This choice of finite
element space yields a significant reduction in memory per spatial cell, as illustrated in \cref{tab-sndg-costper}.

\begin{table}[!h]
	\centering
	\begin{tabular}{c|c|c}
		\hline
		Unknowns per cell & Triangles ($P^1$) 			& Rectangles ($Q^1$) \\
		\hline
		Standard $S_N$-DG					& $(d+1)n_\Omega$					& $2^{d}n_\Omega$						\\
		\hline
		low-memory $S_N$-DG					& 
		${n_\Omega + d}$ &
		${(n_\Omega -1)+ 2^{d}}$	\\
		\hline
		Memory cost ratio as $n_\Omega \gg 1$			& $d+1$						& $2^{d}$\\
		\hline	\end{tabular}
	\caption{Memory costs of standard $S_N$-DG and the low-memory variation, both for triangles and rectangles, for spatial dimension $d$. The first two rows give the number of unknowns per angle per spatial cell for each approach.  The last row is the asymptotic ratio of the memory costs by two methods when $n_\Omega$ becomes large. }
	\label{tab-sndg-costper}
\end{table}

In the diffusion limit, the low-memory approach typically displays second-order accuracy. However, because the finite element representation of each ordinate is coupled to all the other ordinates, the overall accuracy of the low-memory approach for fixed $\veps$ is only first-order.   
To address this drawback, we propose a modification of the low-memory scheme
that uses local reconstruction to improve accuracy. As long as the reconstruction uses upwind information, the resulting transport operator can still be inverted with sweeps.   While rigorous theoretic properties of this modified scheme are still under 
investigation, we observe numerically that it recovers second-order accuracy for arbitrary fixed
$\varepsilon$ and captures the asymptotic diffusion limit.  However, the method does generate some small numerical artifacts at the discontiuity of the cross section, which we point out in the numerical results of \Cref{sc-num}.  

The rest of the paper is organized as follows. In \Cref{sc-background}, we
introduce the background and revisit the $S_N$-DG method.  Low-memory
methods, including the original first-order approach and the second-order reconstructed scheme, are
detailed in \Cref{sc-lmdg}.  Numerical tests are provided in  \Cref{sc-num} to illustrate the behavior of both approaches.  Finally, conclusions and future work are discussed in \Cref{sc-conclude}.
\section{The $S_N$-DG method}\label{sc-background}
\setcounter{equation}{0}
\setcounter{figure}{0}
\setcounter{table}{0}

In this section, we review the $S_N$-DG scheme and discuss its asymptotic properties and implementation.  Throughout the paper, we consider the  case $\inf_{x\in D} \sig{s}(x) = \delta_s > 0$ and $\inf_{x\in D}\sig{a}(x)=\delta_{\rm{a}} >0$, unless otherwise stated.  In general, the well-posedness of \eqref{eq-main} also holds for $\sig{a} \geq 0$
\cite{wu2015geometric}.  In some places, we will also assume that the cross-section is piecewise constant, either to simplify the exposition or to make connections between first- and second-order forms of the diffusion limit.  In the numerics, we often consider nonzero boundary conditions.  However, in proofs we often assume that $\alpha = 0$.  When $\alpha$ is nonzero but isotropic, many of the results still hold.  However, when $\alpha$ is anisotropic, the diffusion equation requires a boundary layer correction in order to be uniformly accurate \cite{Habetler-Matkowsky-1975}. At the discrete level, this situation requires more sophisticated analysis \cite{larsen1989asymptotic,adams2001discontinuous,guermond2010asymptotic} than is presented here.

\subsection{Formulation}

Consider a quadrature rule with points $\{\Omega_j\}_{ j =1}^{n_\Omega}$ and positive weights $\{w_j\}_{ j =1}^{n_\Omega}$ such that
\begin{equation}
\frac{1}{|S|}\int_S f(\Omega) d\Omega \approx \sum_{j=1}^{n_\Omega} w_j f(\Omega_j), \quad 
\forall f \in C(S).
\end{equation}
We assume the quadrature is exact for polynomials in $\Omega$ up to degree two%
\footnote{Level symmetric quadratures of moderate size will satisfy these properties. See, e.g., \cite{Lewis-Miller-1984} and references therein.}%
; that is, 
\begin{eqnarray}
\label{eq-polyquad}
(i)~\sum_{j=1}^{n_\Omega} w_j = 1,\quad
(ii)~\sum_{j=1}^{n_\Omega} w_j \Omega_j = 0, \quand
(iii)~\sum_{j=1}^{n_\Omega} w_j \Omega_j \otimes \Omega_j = \frac{1}{3} \operatorname{Id}.
\end{eqnarray}
The $S_N$ method approximates the angular flux $\Psi$ at the quadrature points $\{\Omega_j\}_{ j =
	1}^{n_\Omega}$ by a vector-valued 
function $\psi(x) = (\psi_1(x),\psi_2(x),\dots,\psi_{n_\Omega}(x))$ whose components satisfy a coupled system with $n_\Omega$ equations
\begin{equation}
\label{eq:sn}
\Omega_j \cdot \nabla \psi_j(x) + \left(\frac{\sig{s}}{\veps}+\veps \sig{a}\right)\psi_j(x) 
= \frac{\sig{s}}{\veps} \overline{\psi}(x) + \veps q(x),
\qquad 
\overline{\psi}(x) = \sum_{j=1}^{n_\Omega} w_j \psi(\Omega_j, x).
\end{equation}

To formulate the upwind DG discretization of the $S_N$ system \eqref{eq:sn}, let $\cT_h = \{K\}$ be a quasi-uniform partition of the domain $D$. We assume $D = \cup_{K\in \cT_h} \mathrm{cl}(K)$ to avoid unnecessary technicalities. Let $\cF_h = \cup_{K\in \cT_h}\partial K$ be the collection of cell interfaces and let $\cF_h^\partial$ be the collection of boundary faces. Given a cell $K$, we denote by $\nu_K$ the outward normal on $\partial K$ and for any $x \in \partial K$, let $v^{\rm{int}}(x) = \lim_{\delta \to 0^+}v(x - \delta \nu_K)$ and $v^{\rm{ext}}(x) = \lim_{\delta \to 0^+} v(x + \delta \nu_K)$.  Given a face $F$, we denote by $\nu_F$ a prescribed normal (chosen by convention) and, for any $x \in F$, let  $v^{\pm} = \lim_ {\delta \to 0^+} v(x \pm \delta \nu_F)$.  For convenience, we assume trace values are identically zero when evaluated outside of $D$.

The standard $S_N$-DG method uses the tensor-product finite element space
\begin{equation}\label{eq-cV}
\cV_h= \prod_{j=1}^{n_\Omega} V_h,\qquad V_h = \{v_j: v_j|_K \in Z_1(K)\},
\end{equation}
where for  triangular or tetrahedral meshes, $Z_1(K)$ is the space $P^1(K)$ of linear polynomials
on $K$ and for Cartesian meshes $Z_1(K)$ is the space $Q^1(K)$ of multilinear polynomials
on $K$. The space $\cV_h$
can be equipped with an inner product $(\cdot,\cdot)$  and associated norm $\|\cdot\|$ given by
\begin{equation}
(u,v) = \sum_{K\in \cT_h} \sum_{j=1}^{n_\Omega} w_j \int_K u_j v_j dx
\qquad\text{and}\qquad  \|v\| = \sqrt{(v,v)}.
\end{equation}
The semi-norm induced by jumps at the cell interfaces
is given by 
\begin{equation}\label{eq-jump}
\lb v\rb = \left({\sum_{F\in \cF_h}\sum_{j=1}^{n_\Omega} w_j \int_F
	|\Omega_j\cdot \nu_F| (v_j^{-}-v_j^{+})^2 dx}\right)^{1/2}.
\end{equation} 
To construct the $S_N$-DG method, define the local operators
\begin{subequations}
	\begin{align}
	\label{eq-Ljk}
	L_{j,K}(u,v) =& -\int_K u_{j} \Omega_j \cdot \nabla v_j dx + \int_{\partial K}
	\widehat{u}_{j} \Omega_j\cdot \nu_K v^{\rm{int}}_j dx \\
	&+ \int_K
	\left(\frac{\sig{s}}{\veps}+\veps \sig{a}\right) u_jv_j dx,\nonumber
	\\
	\label{eq-Sjk}
	S_{j,K}(u,v) =&  \int_K\frac{\sig{s}}{\veps}\overline{u} v_j dx, \quad \text{with}~\overline{u} = \sum_{j=1}^{n_\Omega} w_j u_j,
	\\
	Q_{j,K,\alpha}(v) =& \int_K \veps q v_j dx - \int_{\partial K \cap
		\cF_h^{\partial}} \alpha \Omega_j\cdot \nu_K v_j^{\mathrm{int}} dx,
	\end{align}
\end{subequations}
where  $\widehat{u}_j (x)= \lim_{\delta \to 0^-} u(x + \delta \Omega_j) $ is the upwind trace at $x \in \partial K$, and is defined as zero when the limit is taken outside of $D$. Then set 
\begin{equation}\label{eq-Bdef}
B(u,v) = L(u,v) - S(u,v),
\end{equation} 
where
\begin{equation}
\label{eq-LandS}
L(u,v) = \sum_{K\in \cT_h}\sum_{j=1}^{n_\Omega} w_jL_{j,K}(u,v)
\quand 
S(u,v) = \sum_{K\in \cT_h}\sum_{j=1}^{n_\Omega} w_j S_{j,K}(u,v),
\end{equation}
and let 
\begin{equation}\label{eq-lalpha}
Q_{\alpha}(v) = \sum_{K\in \cT_h}\sum_{j=1}^{n_\Omega}
w_j Q_{j,K,\alpha}(v).
\end{equation} 
The $S_N$-DG method is then: \textit{find $\psi_{h}=(\psi_{h,1},\dots,\psi_{h,n_\Omega}) \in \cV_h $ such that} 
\begin{equation}\label{eq-sndg-scheme}
B(\psi_h,v) = Q_{\alpha}(v), \qquad \forall v \in \cV_h.
\end{equation}

\subsubsection{Implementation}\label{sc-basis}
Recall that $n_\Omega$ is the number of discrete ordinates in the $S_N$ discretization.  Let $n_x = |\cT_h|$ be the number of mesh cells in $\cT_h$ and let $n_P$
be the dimension of $Z_1(K)$.  Then the dimension of $\cV_h$ is $n_\Omega\cdot n_x \cdot n_P$. 

Let $\{b^{p,r}:p = 1,\ldots,n_x,r=0,\ldots,n_P-1\}$ be a set of basis functions for $V_h$, with $b^{p,r}$ locally supported on $K_p \in \cT_h$. Then the set $\mathbb{B} = \{\xi^{l,p,r}:l =1,\dots,n_\Omega,p=1,\dots,n_x,r=0,\dots,n_P-1\}$, where $\xi_j^{l,p,r}(x) = \delta_{l j} b^{p,r}(x)$ ($j = 1,\ldots n_\Omega$) and $\delta$ is the Kronecker delta,
gives a complete set of basis functions
for $\cV_h$. 
With this choice of basis functions, the variational formulation in \eqref{eq-sndg-scheme}, written as
\begin{equation}\label{eq-vari-LSQ}
L(\psi_h,v) = S(\psi_h,v) + Q_{\alpha}(v),\qquad \forall v\in \cV_h,
\end{equation}
can be assembled into a linear system (detailed in  \Cref{ap-sndg-matrix})
\begin{equation}\label{eq-sndg-mat}
\bL \mathbf{\Psi} = \bM \bP \mathbf{\Psi} + \bQ.
\end{equation}
In the above equation, $\bL$ is an $(n_\Omega\cdot n_x\cdot n_P)\times (n_\Omega\cdot n_x\cdot
n_P)$ block diagonal matrix, where the $j$-th block ($j = 1,\ldots n_\Omega$) corresponds to the discretization of the operator $\psi_j \to \Omega_j \cdot \nabla \psi_j + \left(\frac{\sig{s}}{\veps}+\veps \sig{a}\right)\psi_j$; $\bM$ is an injective  $(n_\Omega\cdot n_x\cdot n_P)\times (n_x\cdot n_P)$ matrix, 
$\bP$ is an $(n_x\cdot n_P) \times (n_\Omega\cdot n_x\cdot n_P)$ matrix; $\bQ$ is an $(n_\Omega\cdot n_x\cdot n_P)$ vector assembled from the source $q$ and the inflow boundary $\alpha$; and $\bPsi = (\psi^{l,p,r})$ is an $(n_\Omega\cdot n_x\cdot n_P)$ vector such that
$\psi_h = \sum_{l,p,r}\psi^{l,p,r}\xi^{l,p,r}$. 

If upwind values are used to evaluate the numerical trace $\widehat{u}_j$, each block of $\bL$ can be inverted efficiently with a sweep algorithm. 
The system in \eqref{eq-sndg-mat} can be solved numerically with a Krylov method by first solving the reduce system 
\begin{equation}\label{eq-sndg-phi}
\bPhi - \bP \bL^{-1}\bM \bPhi = \bP \bL^{-1} \bQ
\end{equation}
for the ${n_x\cdot n_P}$ vector $\bPhi:=\bP \bPsi$. This equation is derived by applying $\bL^{-1}$ and then $\bP$ to \eqref{eq-sndg-mat}.  In a second step $\bPsi$ is recovered from the relation  
\begin{equation}\label{eq-sndg-final-sweep}
\bPsi = \bL^{-1}\bM \bPhi + \bL^{-1} \bQ.
\end{equation} 
The following theorem is proven in \Cref{ap-sndg-mat-psi}.

\begin{THM} \label{thm-sndg-mat-psi}
	The matrix
	$\bI_{n_x\cdot n_P}-\bP\bL^{-1}\bM$ is invertible.
\end{THM}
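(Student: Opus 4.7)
The plan is in two stages. First, I will show that the full $S_N$-DG matrix $\bL - \bM\bP$ representing the bilinear system \eqref{eq-vari-LSQ} is invertible by proving coercivity of $B = L - S$ on $\cV_h$. Second, I will deduce invertibility of the reduced matrix $\bI_{n_x\cdot n_P} - \bP\bL^{-1}\bM$ from that of $\bL - \bM\bP$ by a direct kernel computation.

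For the coercivity step, I would test with $v = u$ and apply integration by parts cellwise via $u_j\,\Omega_j \cdot \nabla u_j = \hf\,\Omega_j \cdot \nabla u_j^2$. With the upwind choice of $\widehat{u}_j$ and the convention that trace values vanish outside $D$, collecting the resulting boundary contributions from neighboring cells reorganizes them exactly into $\hf \lb u \rb^2$, yielding
\begin{equation*}
L(u,u) = \hf \lb u \rb^2 + \sum_{j=1}^{n_\Omega} w_j \int_D \left(\frac{\sig{s}}{\veps}+\veps\sig{a}\right) u_j^2\,dx.
\end{equation*}
On the other hand, the identities $\sum_j w_j = 1$ and $\sum_j w_j u_j = \overline{u}$ give $S(u,u) = \int_D (\sig{s}/\veps)\,\overline{u}^2\,dx$, and Jensen's inequality $\overline{u}^2 \le \sum_j w_j u_j^2$ lets the $\sig{s}/\veps$ piece in $L(u,u)$ absorb $S(u,u)$. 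Combined with $\sig{a} \ge \delta_{\rm a} > 0$, this gives the coercivity estimate
\begin{equation*}
B(u,u) \ge \hf \lb u \rb^2 + \veps\,\delta_{\rm a}\,\|u\|^2,
\end{equation*}
which forces $u = 0$ whenever $B(u,\cdot) \equiv 0$ on $\cV_h$. In finite dimensions this injectivity is equivalent to invertibility of the matrix $\bL - \bM\bP$ realizing $B$ in the basis $\mathbb{B}$.

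For the second stage, suppose $\bPhi \in \mathbb{R}^{n_x \cdot n_P}$ satisfies $(\bI_{n_x\cdot n_P} - \bP\bL^{-1}\bM)\bPhi = 0$ and set $\bPsi := \bL^{-1}\bM\bPhi$. Then $\bL\bPsi = \bM\bPhi$ and $\bP\bPsi = \bPhi$, so
\begin{equation*}
(\bL - \bM\bP)\bPsi = \bM\bPhi - \bM\bP\bPsi = \bM(\bPhi - \bP\bPsi) = 0.
\end{equation*}
Invertibility of $\bL - \bM\bP$ forces $\bPsi = 0$, whence $\bPhi = \bP\bPsi = 0$, showing that $\bI_{n_x\cdot n_P} - \bP\bL^{-1}\bM$ has trivial kernel and is invertible.

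The main technical work is the cellwise bookkeeping that produces the clean jump identity for $L(u,u)$: pairing interior faces via the two adjacent cells with opposite normals, and handling inflow and outflow portions of $\partial D$ consistently with the zero-extension convention. Once this standard upwind DG energy estimate is in hand, both the coercivity bound and the linear-algebraic reduction from $\bL - \bM\bP$ to its Schur complement $\bI_{n_x\cdot n_P}-\bP\bL^{-1}\bM$ follow immediately.
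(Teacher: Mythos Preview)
Your proposal is correct and follows essentially the same approach as the paper: invoke invertibility of $\bL-\bM\bP$ (via coercivity of $B$) and then a kernel argument on $\bI-\bP\bL^{-1}\bM$ using $\bPsi:=\bL^{-1}\bM\bPhi$. The only minor differences are that you spell out the upwind energy identity the paper defers to references, and in the last step you conclude $\bPhi=\bP\bPsi=0$ directly whereas the paper instead uses $\bM\bPhi=\bL\bPsi=0$ together with injectivity of $\bM$; both routes are valid.
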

\begin{REM}[Sherman--Morrison formula]  According to the
	Sherman-Morrison formula (see for example \cite[Section 2.1.3]{golub2012matrix}): given invertible matrices $\bB = \bA + \bU \bV$ and 
	$\bI + \bV \bA^{-1}\bU$,
	\begin{equation}\label{eq-sndg-mat-sm}
	\bB^{-1} = \bA^{-1} - \bA^{-1}\bU (\bI+\bV\bA^{-1}\bU)^{-1}\bV \bA^{-1}.
	\end{equation}
	The direct application of \eqref{eq-sndg-mat-sm}
	with $\bA = \bL$, $\bU = -\bM$ and $\bV = \bP$, yields the formula in \eqref{eq-sndg-final-sweep} with $\bPhi$ given by \eqref{eq-sndg-phi}. 
\end{REM}

\subsubsection{Asymptotic scheme}

As $\veps \to 0$, the $S_N$-DG scheme gives a consistent
approximation to the asymptotic diffusion problem. For simplicity, we focus here on the zero inflow boundary condition $\alpha
= 0$. The analysis of more general boundary conditions can be found in
\cite{adams2001discontinuous, guermond2010asymptotic, guermond2014discontinuous, larsen1989asymptotic}.

We use an overline to represent isotropic subspaces. For example, 
\begin{equation}
\cD_h = \{v = (v_1,\ldots,v_{n_\Omega}) \in \cV_h: v_i = \overline{v}, \forall i\}.
\end{equation}
We further define $\cC_{h,\zero}$ to be the space of continuous functions in $\cD_h$ that vanish on $\partial D$. $\cD_h^d = \{(\vphi_1,\dots,\vphi_d): \vphi_i \in \cD_h\}$ is used to represent the tensor product space of $\cD_h$ with an induced norm still denoted as $\|\cdot\|$.  In particular, since $\cD_h$ and $V_h$ are isomorphic, 
we often identify $\cD_h$ with $V_h$.
To facilitate the discussion, we also define  
\begin{equation}
\label{eq-Jh}
J_h = \frac{1}{\veps}\sum_{j=1}^{n_\Omega} w_j\Omega_j \psi_{h,j} = \sum_{j=1}^{n_\Omega} w_j\Omega_j \frac{\psi_{h,j}-\overline{\psi}_h}{\veps},
\end{equation} 
which is a vector field in $\mathbb{R}^d$.
The following result is proved in  \cite{guermond2010asymptotic}\footnote{The result in \cite{guermond2010asymptotic} is actually stated for more generally.  In particular it allows $\alpha$ to be nonzero and possibly anisotropic.}; see also \cite{adams2001discontinuous} and  \Cref{thm-lmdg-asympscheme} in this paper. 

\begin{THM}[Asymptotic scheme]\label{thm-sndg-asympscheme}
	Suppose $\alpha = 0$.  
	Then as $\veps \to 0$, $(\psi_h)_{\veps>0}$ and $(J_h)_{\veps>0}$ converge to ${\psi}_h^{(0)} = \overline{\psi}_h^{(0)} \in
	\cC_{h,\zero}$ and $J_h^{(0)}\in \cD_{h}^d$, respectively, that are the unique solution to the mixed problem:
	\begin{subequations}\label{eq-sndg-mix}
		\begin{gather}
		\sum_{K\in \cT_h} \int_K \left(-J_h^{(0)}\cdot \nabla \vphi +
		\sig{a}{\psi}_h^{(0)} \vphi\right) dx= \int_D q\vphi
		dx,\label{eq-sndg-lim1}\\
		\sum_{K\in \cT_h}\int_K\left(\frac{1}{3}\nabla {\psi}_h^{(0)}  +
		\sig{s}
		J_h^{(0)}\right)\cdot \zeta dx =
		0\label{eq-sndg-lim2},
		\end{gather}
	\end{subequations}
	$\forall \vphi \in \cC_{h,\zero}$ and $\forall \zeta \in \cD_h^{d}$.
\end{THM}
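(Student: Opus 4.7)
My plan is a compactness-plus-energy argument in the finite-dimensional space $\cV_h$, in the spirit of \cite{adams2001discontinuous,guermond2010asymptotic}. First, I would derive $\veps$-uniform a priori bounds by testing \eqref{eq-sndg-scheme} with $v=\psi_h$. The standard upwind-DG integration by parts, combined with the identity that turns the two $\sig{s}/\veps$ contributions in $L$ and $S$ into a squared deviation from the angular average, gives
\begin{equation*}
\tfrac{1}{\veps}(\sig{s}(\psi_h-\overline{\psi}_h),\psi_h-\overline{\psi}_h) + \veps(\sig{a}\psi_h,\psi_h) + \tfrac12\lb\psi_h\rb^2 + \tfrac12\sum_{j}w_j\!\int_{\partial D}\!|\Omega_j\cdot\nu|\psi_{h,j}^2\,dx = \veps\int_D q\,\overline{\psi}_h\,dx.
\end{equation*}
Young's inequality together with $\sig{a}\geq\delta_{\rm a}>0$ then yield $\|\psi_h\|=O(1)$, $\|\psi_h-\overline{\psi}_h\|=O(\veps)$ (hence $\|J_h\|=O(1)$), $\lb\psi_h\rb=O(\sqrt\veps)$, and a boundary trace of order $O(\sqrt\veps)$. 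Since $\cV_h$ is finite dimensional, a subsequence converges: $\psi_h\to\psi_h^{(0)}$ and $J_h\to J_h^{(0)}$. Passing to the limit in the bounds forces $\psi_h^{(0)}$ to be isotropic, continuous across interior faces, and zero on $\partial D$, so $\psi_h^{(0)}\in\cC_{h,\zero}$, and $J_h^{(0)}\in\cD_h^d$ is immediate.

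I would then derive the two equations of \eqref{eq-sndg-mix} by two choices of test function. For \eqref{eq-sndg-lim1}, take $v=\vphi\in\cC_{h,\zero}$: isotropy of $\vphi$ cancels the $\sig{s}/\veps$ term in $L$ against $S$; continuity and the vanishing boundary trace of $\vphi$ eliminate all upwind face sums; and the volume advection reduces via $\sum_j w_j\Omega_j\psi_{h,j}=\veps J_h$ to $-\veps\sum_K\int_K J_h\cdot\nabla\vphi$. Dividing by $\veps$ and letting $\veps\to 0$ gives \eqref{eq-sndg-lim1}. For \eqref{eq-sndg-lim2}, take $v_j=\Omega_j\cdot\overline{\zeta}$ with $\zeta\in\cD_h^d$: identity (ii) of \eqref{eq-polyquad} makes $\overline{v}=0$, killing both $Q_0(v)$ and $S(\psi_h,v)$; the $\sig{s}/\veps$ mass term in $L$ collapses exactly to $\int_D\sig{s}J_h\cdot\overline{\zeta}$; the $\veps\sig{a}$ piece is $O(\veps^2)$; and identity (iii) reduces the volume advection to $-\tfrac13\sum_K\int_K\overline{\psi}_h\,\nabla\cdot\overline{\zeta}$ modulo $O(\veps)$. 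Decomposing the upwind trace as $\widehat{\psi}_{h,j}=\tfrac12(\psi_{h,j}^-+\psi_{h,j}^+)+\tfrac12\sign(\Omega_j\cdot\nu_F)(\psi_{h,j}^- -\psi_{h,j}^+)$ and invoking (iii) again turns the interior face sums into a contribution involving only $\overline{\psi}_h$ and $\overline{\zeta}$ up to an $O(\sqrt\veps)$ remainder controlled by $\lb\psi_h\rb$; the boundary face sum is handled by $\|\psi_h\|_{\partial D}=O(\sqrt\veps)$ together with $\psi_h^{(0)}|_{\partial D}=0$. In the limit, the combined volume-plus-face expression is precisely a discrete integration by parts applied to the continuous function $\psi_h^{(0)}$, producing $\sum_K\int_K\tfrac13\nabla\psi_h^{(0)}\cdot\overline{\zeta}$; together with the scattering piece this is \eqref{eq-sndg-lim2}.

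Uniqueness of the pair solving \eqref{eq-sndg-mix} follows by testing \eqref{eq-sndg-lim1} with $\vphi=\psi_h^{(0)}$ and \eqref{eq-sndg-lim2} with $\zeta=3J_h^{(0)}$ and adding: the cross term cancels, yielding $\int_D\bigl(\sig{a}(\psi_h^{(0)})^2 + 3\sig{s}|J_h^{(0)}|^2\bigr)dx=\int_D q\,\psi_h^{(0)}\,dx$, from which uniqueness is immediate; uniqueness in turn promotes subsequential convergence to convergence of the full $\veps$-indexed family. The main technical hurdle is the face-flux analysis for the second test function: the upwind rule couples $\sign(\Omega_j\cdot\nu_F)$ to both $\psi_h$ and the test function, so correctly isolating the $\tfrac13$ prefactor and showing that all anisotropic remainders vanish requires careful bookkeeping that combines the $O(\sqrt\veps)$ jump bound, an inverse trace inequality applied to the $O(\veps)$ deviation $\psi_h-\overline{\psi}_h$, and repeated use of the quadrature identities \eqref{eq-polyquad}.
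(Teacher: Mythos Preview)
Your proposal is correct and follows essentially the same route as the paper (which in turn defers to \cite{guermond2010asymptotic} and its own \Cref{thm-lmdg-asympscheme}): energy estimate for $\veps$-uniform bounds, finite-dimensional compactness, the two test functions $\vphi\in\cC_{h,\zero}$ and $v_j=\Omega_j\cdot\zeta$, and uniqueness of the mixed problem to upgrade subsequential to full convergence.

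The one place where your bookkeeping differs slightly is in the face-term analysis for \eqref{eq-sndg-lim2}. You propose to decompose the upwind trace into symmetric and jump parts, then bound the anisotropic remainders using $\lb\psi_h\rb=O(\sqrt\veps)$ and an inverse trace inequality on $\psi_h-\overline{\psi}_h$. The paper instead passes to the limit \emph{first}: since $\psi_h\to\overline{\psi}_h^{(0)}\in\cC_{h,\zero}$, in the limit the upwind trace $\widehat{\psi}_{h,j}$ becomes simply the single-valued $\overline{\psi}_h^{(0)}$ on each face, and the volume-plus-face combination collapses directly via integration by parts (using quadrature identity (iii)) to $\sum_K\int_K\tfrac13\nabla\overline{\psi}_h^{(0)}\cdot\zeta\,dx$. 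Both orderings are valid; the paper's is shorter because continuity and the zero boundary trace of the limit do the work that your explicit remainder estimates would otherwise have to do.
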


\section{Low-memory strategies}\label{sc-lmdg}
\setcounter{equation}{0}
\setcounter{figure}{0}
\setcounter{table}{0}
In this section, we generalize the statement of \Cref{thm-sndg-asympscheme} slightly to allow for proper subspaces of $\cV_h$ in the finite element formulation. Based on the analysis, a first-order low-memory scheme is constructed. We then apply the reconstruction technique to lift the accuracy of the method to second-order. 

\subsection{Asymptotic schemes with subspaces of $\cV_h$}\label{subsec-asymptotic-subspaces} The results of \cref{thm-sndg-asympscheme} suggest that, rather than $\psi_{h}$, it is the approximation of the integrated quantities $\overline{\psi}_h$ and $J_h$ that play an important role in the diffusion limit.  In particular, the continuity requirement on $\overline{\psi}_h^{(0)}$ plays a crucial role.  Indeed, as is well known \cite{adams2001discontinuous}, if the space $V_h$ is constructed from piecewise constants, then \eqref{eq-sndg-mix} implies that ${\psi}_h^{(0)}$ is a global constant and $J_h^{(0) }= 0$.   This solution is clearly inconsistent with the diffusion limit.   However, it is possible to construct a DG method: \textit{find $\psi_{h}=(\psi_{h,1},\dots,\psi_{h,n_\Omega}) \in \cW_h $ such that} 
\begin{equation}\label{eq-sndglm-scheme}
B(\psi_h,v) = Q_{\alpha}(v), \qquad \forall v \in \cW_h
\end{equation}
based on a proper subspace $\cW_h \subset \cV_h$ that maintains the diffusion limit, but requires fewer unknowns for a given mesh $\cT_h$.

\begin{THM}\label{thm-gene-uni-solvency}
	For each $\veps>0$ and linear subspace $\cW_h \subset \cV_h$, \eqref{eq-gene-scheme} has a unique solution. In particular, if $\alpha = 0$, the solution  
	satisfies the energy estimate 
	\begin{equation}\label{eq-stab}
	\frac{1}{\veps}\|\sig{s}^{\frac{1}{2}}(\psi_h - \overline{\psi}_h)\|^2
	+ \frac{\veps}{2}\|\sig{a}^\frac{1}{2}\psi_h\|^2 
	+\frac{1}{2}\lb \psi_h\rb^2
	\leq \frac{\veps}{2\delta_{\rm{a}}}\|q\|^2.
	\end{equation}
\end{THM}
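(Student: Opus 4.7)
My plan is to derive the energy estimate first and then deduce well-posedness from it. Since \eqref{eq-sndglm-scheme} is a square linear system on the finite-dimensional space $\cW_h$, uniqueness implies existence; and the estimate applied with $q = 0$, combined with $\inf_D \sig{a} = \delta_{\rm{a}} > 0$, forces the only homogeneous solution to be zero. So the heart of the argument is the a priori bound.

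To prove the estimate I would test the variational formulation with $v = \psi_h$, which is admissible because $\cW_h$ is a linear subspace. The target identity is
\begin{equation*}
B(\psi_h,\psi_h) = \frac{1}{\veps}\|\sig{s}^{1/2}(\psi_h - \overline{\psi}_h)\|^2 + \veps \|\sig{a}^{1/2}\psi_h\|^2 + \frac{1}{2}\lb \psi_h\rb^2.
\end{equation*}
The lower-order contribution from $\int_K (\sig{s}/\veps + \veps \sig{a}) u_j^2\,dx$ inside $L$ is immediate. For the advection part I would integrate by parts cell by cell using $2 u_j \Omega_j \cdot \nabla u_j = \Omega_j \cdot \nabla (u_j^2)$, then on each face combine contributions from its two sides with the upwind rule (i.e. $\widehat u_j = u_j^-$ when $\Omega_j \cdot \nu_F > 0$). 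The standard algebra collapses each face contribution to $\tfrac{1}{2}\int_F |\Omega_j \cdot \nu_F|(u_j^- - u_j^+)^2\,dx$; with $\alpha = 0$ the exterior trace vanishes on $\cF_h^\partial$, so boundary faces fit into the same formula and the total reproduces $\tfrac{1}{2}\lb \psi_h\rb^2$ after weighting by $w_j$ and summing over $j$. A direct computation gives $S(u,u) = \tfrac{1}{\veps}\|\sig{s}^{1/2}\overline{u}\|^2$, and subtracting from $\tfrac{1}{\veps}\|\sig{s}^{1/2}u\|^2$ produces the variance sum via the pointwise identity $\sum_j w_j u_j^2 - \overline u^2 = \sum_j w_j(u_j - \overline u)^2$.

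With $B(\psi_h,\psi_h)$ in coercive form, I would then bound the right-hand side. Because $q$ is independent of $\Omega$, $Q_0(\psi_h) = \veps \int_D q\,\overline{\psi}_h\,dx$. Jensen's inequality $\overline{\psi}_h^2 \leq \sum_j w_j \psi_{h,j}^2$ gives $\|\overline{\psi}_h\|_{L^2(D)} \leq \|\psi_h\|$, hence $|Q_0(\psi_h)| \leq \veps\|q\|\,\|\psi_h\| \leq (\veps/\sqrt{\delta_{\rm{a}}})\,\|q\|\,\|\sig{a}^{1/2}\psi_h\|$. A weighted Young inequality $ab \leq a^2/(2\veps) + \veps b^2/2$ then yields
\begin{equation*}
|Q_0(\psi_h)| \leq \frac{\veps}{2\delta_{\rm{a}}}\|q\|^2 + \frac{\veps}{2}\|\sig{a}^{1/2}\psi_h\|^2,
\end{equation*}
and absorbing the second term on the left reproduces the claimed bound \eqref{eq-stab}.

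The only steps that need genuine care are the upwind flux identities on $\partial K$---specifically verifying that, with $\alpha = 0$, the boundary face contributions combine cleanly into the full jump semi-norm $\lb\cdot\rb$, which per \eqref{eq-jump} already includes $\cF_h^\partial$---and the variance identity that makes $L - S$ manifestly non-negative. Both are entirely standard DG computations. The conceptual point, and the reason the proof extends verbatim to any subspace $\cW_h$, is that every manipulation is an identity or inequality evaluated at the single element $\psi_h \in \cW_h$; none of them rely on the tensor-product structure of $\cV_h$ or on the availability of particular test functions beyond $\psi_h$ itself.
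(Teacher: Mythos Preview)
Your argument is correct and matches the paper's approach exactly: the paper does not spell out the proof but simply states that it ``is based on coercivity of $B(\cdot,\cdot)$'' and refers to \cite{HHE2010,guermond2010asymptotic}, which is precisely the test-with-$\psi_h$ coercivity computation you carry out. The details you give (upwind flux identity yielding $\tfrac12\lb\psi_h\rb^2$, the variance identity for the scattering term, and the Young-inequality absorption of $Q_0(\psi_h)$) are the standard ones underlying those references, and your observation that nothing beyond $\psi_h\in\cW_h$ as a test function is needed is exactly why the result extends to arbitrary subspaces.
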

The proof is based on coercivity of $B(\cdot,\cdot)$ and we refer to \cite{HHE2010} and \cite{guermond2010asymptotic} for details. Here, $\alpha = 0$ is assumed for simplicity. Energy estimates with general inflow boundary condition can be found in \cite[Lemma 4.2]{guermond2010asymptotic}. In \cite{HHE2010}, the case $\veps = 1$ is studied and error estimates are derived using the coercivity with respect to a modified norm. 

We next characterize sufficient conditions  for  $\cW_h $.  Define the spaces 
\begin{equation}
\overline{\Omega \cW_h}:= \{\sum_{j=1}^{n_{\Omega}} w_j \Omega_j v_j: v\in \cW_h\} \subset \cD_h^d
\quand
\Omega\cdot \overline{\Omega \cW_h} := \{\Omega\cdot \zeta:\zeta \in \overline{\Omega \cW_h}\} \subset \cV_h,
\end{equation}
where $\Omega\cdot \zeta:=(\Omega_1 \cdot \zeta, \ldots, \Omega_{n_\Omega} \cdot \zeta)$.
According to \eqref {eq-Jh}, $J_h\in \overline{\Omega \cW_h}$. \Cref{thm-sndg-asympscheme} can now be generalized to the space $\cW_h $.

\begin{THM}\label{thm-lmdg-asympscheme}
	Suppose $\alpha = 0$. Suppose $\cW_h \subset \cV_h$ is a linear space such that
	$\Omega\cdot \overline{\Omega \cW_h} \subset \cW_h$.
	Then as $\veps \to 0$, $({\psi_h})_{\veps>0}$ and $({J}_h)_{\veps>0}$ converge to ${\psi}_h^{(0)} = \overline{\psi}_h^{(0)}\in
	\cC_{h,\zero}\cap\cW_h$ and ${J}_h^{(0)}\in\overline{\Omega \cW_h}$, respectively, that are the unique solution to the mixed problem
	\eqref{eq-sndg-mix},
	
	$\forall \vphi \in \cC_{h,\zero}\cap \cW_h$ and $\forall \zeta \in \overline{\Omega \cW_h}$.

\end{THM}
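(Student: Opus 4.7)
The plan is to adapt the asymptotic analysis used for \Cref{thm-sndg-asympscheme} to the smaller space $\cW_h$, proceeding in three movements: extract uniform bounds on $(\psi_h, J_h)$ from the energy estimate, identify subsequential limits together with their function-space regularity, and pass to the limit in the variational formulation with two carefully chosen families of test functions. From \eqref{eq-stab} I obtain, uniformly in $\veps$, the bounds $\|\psi_h\|\le C$, $\|\psi_h-\overline{\psi}_h\|\le C\veps$, $\lb \psi_h\rb \le C\veps^{1/2}$, and, via \eqref{eq-Jh}, $\|J_h\|\le C$. Since $\cW_h$ is finite-dimensional, these produce subsequences with $\psi_h \to \psi_h^{(0)}$ and $J_h\to J_h^{(0)}$. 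The first bound forces $\psi_h^{(0)}$ to be isotropic; the jump bound, combined with $\alpha=0$ and the convention that trace values vanish outside $D$, forces both continuity across interior faces and vanishing trace on $\partial D$; closedness of $\cW_h$ then gives $\psi_h^{(0)} \in \cC_{h,\zero}\cap\cW_h$ and $J_h^{(0)}\in \overline{\Omega\cW_h}$.

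For the limit equations I would use two test-function families in \eqref{eq-sndglm-scheme}. First, setting $v_j = \vphi$ (independent of $j$) for arbitrary $\vphi\in\cC_{h,\zero}\cap\cW_h$, the quadrature identities (i) and (ii) make the $\sig{s}/\veps$ pieces of $L$ and $S$ cancel, continuity of $\vphi$ collapses the interior upwind-flux terms, and $\vphi|_{\partial D}=0$ together with $\alpha=0$ kills the boundary term; after dividing by $\veps$ and sending $\veps\to 0$, the surviving identity is exactly \eqref{eq-sndg-lim1}. Second, setting $v_j = \Omega_j\cdot\zeta$ with $\zeta\in\overline{\Omega\cW_h}$, the standing hypothesis $\Omega\cdot\overline{\Omega\cW_h}\subset\cW_h$ guarantees $v\in\cW_h$, while quadrature identity (ii) causes both $Q_\alpha(v)$ and $S(\psi_h,v)$ to vanish, reducing the scheme to $L(\psi_h,v)=0$. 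Splitting $\psi_{h,j}=\overline{\psi}_h+(\psi_{h,j}-\overline{\psi}_h)$ and invoking quadrature identity (iii), the isotropic part of the volume term becomes $-\tfrac{1}{3}\sum_K\int_K \overline{\psi}_h\nabla\cdot\zeta\,dx$, the isotropic part of the upwind surface term becomes $\tfrac{1}{3}\sum_F\int_F \overline{\psi}_h\,\nu_F\cdot [\zeta]\,dx$ (using continuity of $\overline{\psi}_h^{(0)}$ to identify the upwind value in the limit), and the anisotropic piece collects into $\sum_K\int_K(\sig{s}+\veps^2\sig{a})J_h\cdot\zeta\,dx$ up to an $O(\veps)$ remainder. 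Sending $\veps\to 0$ and performing element-wise integration by parts on the volume term causes the interior face jumps to exactly cancel and the boundary-face contributions to drop (since $\overline{\psi}_h^{(0)}|_{\partial D}=0$), leaving \eqref{eq-sndg-lim2}.

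For uniqueness, given two limit pairs $(\psi_h^{(0)},J_h^{(0)})$ and $(\tilde{\psi}_h^{(0)},\tilde{J}_h^{(0)})$, I would test \eqref{eq-sndg-lim1} with $\vphi = \psi_h^{(0)}-\tilde{\psi}_h^{(0)}$ and three times \eqref{eq-sndg-lim2} with $\zeta = J_h^{(0)}-\tilde{J}_h^{(0)}$; the cross gradient terms cancel and I obtain $\sum_K \int_K (3\sig{s}|J_h^{(0)}-\tilde{J}_h^{(0)}|^2+\sig{a}|\psi_h^{(0)}-\tilde{\psi}_h^{(0)}|^2)\,dx=0$, and the strict positivity of $\sig{s},\sig{a}$ forces equality. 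Uniqueness then promotes subsequential convergence to convergence of the whole family. The main obstacle is the second test-function step: one must simultaneously track the upwind-flux jumps together with the element-wise divergence of $\zeta$, and see that only the specific combination furnished by quadrature identity (iii) closes the integration by parts so that all face-jump contributions telescope exactly into the divergence--gradient pairing in \eqref{eq-sndg-lim2}, with no spurious contribution at $\partial D$ where $\overline{\psi}_h^{(0)}$ vanishes but $\zeta$ need not.
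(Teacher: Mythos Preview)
Your proposal is correct and follows essentially the same five-step structure as the paper's proof: uniform bounds from the energy estimate, compactness in the finite-dimensional space $\cW_h$, testing with isotropic $\vphi\in\cC_{h,\zero}\cap\cW_h$ to recover \eqref{eq-sndg-lim1}, testing with $v=\Omega\cdot\zeta$ (where the hypothesis $\Omega\cdot\overline{\Omega\cW_h}\subset\cW_h$ is precisely what makes $v$ admissible) to recover \eqref{eq-sndg-lim2}, and uniqueness via positivity of $\sig{s},\sig{a}$. One small slip: it is your \emph{second} bound $\|\psi_h-\overline{\psi}_h\|\le C\veps$, not the first, that forces the limit to be isotropic.
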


\begin{proof}
	
	Because the proof follows the arguments in \cite[Section 4]{guermond2010asymptotic} closely, 
	we provide only 
	a brief outline, emphasizing where the condition on the space $\cW_h$ plays a role.  
	
	1. The stability estimate in \eqref{eq-stab} provides the following three bounds:
	\begin{equation}\label{eq-bound}
	(i)~\|\psi_h \|^2\leq   \frac{1}{\delta_{\rm{a}}^2}\|q\|^2
	,\quad 
	(ii)~\|\psi_h -\overline{\psi}_h \|^2 \leq \frac{\veps^2}{\delta_{\rm{a}}
		\delta_{\rm{s}}}\|q\|^2
	,\quand
	(iii)~ \lb\psi_h\rb^2 \leq
	\frac{\veps}{\delta_{\rm{a}}}\|q\| ^2.
	\end{equation}
	Bounds (i) and (ii)  imply that $\psi_h $ converges (via a subsequence) to a function  $ {\psi}_h^{(0)} \in  \overline{\cV}_h$.  Bound (iii)
	implies that $\psi_h^{(0)} \in \cC_{h,\zero} \cap \cW_h = \cC_{h,\zero} \cap \overline{\cW}_h$.
	
	2. Since, from the definition in \eqref{eq-Jh},
	\begin{equation}
	\|J_h \| \leq \sum_{j=1}^{n_\Omega} w_j  \frac{\|\psi_h-\overline{\psi}_h\|}{\veps},
	\end{equation}
	where $\|J_h\|$ is the tensor product norm of $J_h$ in  $\overline{\cV}^d_h$,
	the bound (ii) implies further that  $(J_h )_{\veps>0} \subset \overline{\Omega \cW_h}$ is uniformly bounded and hence converges subsequentially to a limit  $J_h^{(0)}\in \overline{\Omega \cW_h}$.

	3. The equation in \eqref{eq-sndg-lim1} is derived by testing \eqref{eq-sndglm-scheme} with $v =\vphi \in \cC_{h,\zero}\cap \cW_h$ and 
	using the fact that $\vphi$ is independent of $\Omega$ and continuous in $x$.
	
	4. It is the derivation of \eqref{eq-sndg-lim2} which uses the condition $\Omega\cdot \overline{\Omega \cW_h} \subset \cW_h$.  Specifically, if $v = \Omega\cdot \zeta$ with $\zeta \in \overline{\Omega\cW_{h}}$, then this condition implies that $v \in \cW_h$.  Therefore, we can test \eqref{eq-sndglm-scheme} with this choice of $v$ to find that
	\begin{equation}
	\begin{aligned}
	L(\psi_h,\Omega \cdot \zeta) - S(\psi_h,\Omega \cdot \zeta) 
	=&- \sum_{j=1}^{n_\Omega} w_j \sum_{K \in \cT_h} 
	\int_K \psi_{h,j} \Omega_j \cdot \nabla (\Omega_j \cdot \zeta) dx \\
	&+ \sum_{j=1}^{n_\Omega} w_j \sum_{K \in \cT_h} 
	\int_{\partial K} \widehat\psi_{h,j} (\Omega_j \cdot \nu_K) (\Omega_j \cdot \zeta^{\rm{int}}) dx \\
	&+ \sum_{j=1}^{n_\Omega} w_j \sum_{K \in \cT_h}
	\int_K \left(
	\left( \frac{\sig{s}}{\veps} + \veps \sig{a} \right) \psi_{h,j} 
	- \frac{\sig{s}}{\veps} \overline{\psi}_h  \right) (\Omega_j \cdot \zeta) dx \\
	=:&\ I + II + III.
	\end{aligned}
	\end{equation}
	We combine $I$ and $II$, using the fact that $ \overline{\psi}^{(0)}_{h} \in \cC_{h,\zero}$ and invoking \eqref{eq-polyquad}. This gives
	\begin{equation}
	\label{eq-fick1}
	\begin{aligned}
	\lim_{\veps\to 0} (I + II)  
	&=\sum_{j=1}^{n_\Omega} w_j (\Omega_j \otimes \Omega_j) : 
	\sum_{K \in \cT_h}  \left(- \int_K  \overline{\psi}^{(0)}_{h}  \nabla \zeta dx   + \int_{\partial K} \overline{\psi}^{(0)}_{h}  \nu_K \otimes \zeta^{\rm{int}}  dx \right)  \\
	& = \frac{1}{3} \operatorname{Id} : \sum_{K \in \cT_h}  \int_K \nabla \overline{\psi}^{(0)}_{h} \otimes \zeta dx
	=  \sum_{K \in \cT_h}  \int_K \frac{1}{3} \nabla \overline{\psi}^{(0)}_{h} \cdot \zeta  dx.
	\end{aligned}
	\end{equation}
	Since $\sum_{j=1}^{n_\Omega} w_j \overline{\psi}_{h}  \Omega_j =0 $,
	\begin{align}
	\label{eq-fick2}
	\lim_{\veps\to 0} III = 
	\lim_{\veps\to 0}  \sum_{K \in \cT_h}
	\int_K 
	\left( \frac{\sig{s}}{\veps} + \veps \sig{a} \right) \sum_{j=1}^{n_\Omega}(w_j \psi_{h,j} \Omega_j )\cdot \zeta dx =
	\int_K \sig{s} J^{(0)}_h  \cdot \zeta dx .
	\end{align}
	Finally, the right-hand side of \eqref{eq-sndglm-scheme} is (for $\alpha = 0$)
	\begin{equation}
	\label{eq-L0-test}
	Q_0(v) = \sum_{j=1}^{n_\Omega} w_j \sum_{K \in \cT_h} \int_K \Omega_j \cdot \zeta  q dx= 0.
	\end{equation}
	Combining  \eqref{eq-fick1}, \eqref{eq-fick2}, and \eqref{eq-L0-test} recovers \eqref{eq-sndg-lim2}.
	
	5. Uniqueness of the subsequential limits $\psi ^{(0)}_h$ and $J^{(0)}_h$ follows from the uni-solvency of  \eqref{eq-sndg-mix}.  Indeed if $(\widetilde  \psi_h,\widetilde  J_h)$ is the difference between any two solutions of  \eqref{eq-sndg-mix}, then 
	\begin{equation}
	3\sig{s} \|\widetilde J_h\|^2 + \sig{a}\|\widetilde \psi_h \|^2 = 0.
	\end{equation}
	Since $\sig{s}$ and $\sig{a}$ are assumed positive, it follows that $\widetilde \psi_h$ and $\widetilde J_h$ are identically zero.
\end{proof}

We then discuss the choice of $\cW_h$ and the corresponding space pair, $\cS_h := \cC_{h,\zero}\cap \cW_h$ and $\cJ_h := \overline{\Omega\cW_h}$, in the diffusion limit. Let $Z_0(K)$ be the space spanned by constants on $K$. Then we define the piecewise constant space
$
\cV_{h,0} = \{v\in \cV_h:v_j|_K \in Z_0(K),  \forall K \in \cT_h\}
$ and its orthogonal complement
$\cV_{h,1} = \{v \in \cV_h: \int_K v_j dx = 0,  \forall K \in \cT_h\}.
$
The isotropic subspace of $\cV_{h,r}$ is denoted by  $\cD_{h,r}$ and the subsequent product space is denoted by $\cD_{h,r}^d$, $r = 0,1$. 

1. When $\cW_h = \cV_{h,0}$ or $\cW_h =\{v\in \cV_h:v_j|_K \in P_1(K),\forall K \in \cT_h\}$, we have $\cS_h = \{0\}$, which implies $\psi_h^{(0)} = 0$ and $J_h^{(0)} = 0$. 

2. When $\cW_h = \cV_{h,0} + \cD_{h,1} + \Omega \cdot \cD_{h,1}^d$, it can be shown that $\cS_h = \cC_{h,\zero}$, $\cJ_h =\cD_h^d$\footnote{Since $\cD_h^d\supset \cJ_h = \overline{\Omega \cW_h} \supset \overline{\Omega \left(\Omega\cdot \cD_h^d\right)} = \cD_h^d$, which forces $\cJ_h = \cD_h^d$. Here we have used (iii) in \eqref{eq-polyquad} for the last equality.} and $\Omega\cdot \cJ_h \subset \cW_h$. The asymptotic scheme is the same as that of the original $S_N$-DG method.
If $\sig{s}$ and $\sig{a}$ are both piecewise constant, then the asymptotic scheme has the primal form: \textit{find $\psi_h^{(0)} \in \cC_{h,\zero}$, such that
	\begin{equation}\label{eq-lmdg-cg}
	\sum_{K\in \cT_h} \int_K \left(\frac{1}{3 \sig{s}}\nabla {\psi}_h^{(0)}
	\cdot \nabla \vphi+\sig{a}{\psi}_h^{(0)}\vphi\right) dx= \int_D q\vphi
	dx,
	\end{equation}
	$\forall \vphi \in \cC_{h,\zero}$.} This is the classical continuous Galerkin approximation, which is stable and second-order accurate.  

3. When $\cW_h =\cV_{h,0} + \cD_{h,1}$, then $\cS_h = \cC_{h,\zero}$, $\cJ_h =\cD_{h,0}^d$ and $\Omega\cdot \cJ_h \subset \cW_h$. With $P^1$ elements and triangular meshes, the asymptotic scheme is essentially the $P_N$ scheme suggested by Egger and Schlottbom in \cite{egger2012mixed} with $N = 1$. If $Q^1$ elements and Cartesian meshes are used, the scheme yields the same variational form as that in \cite{egger2012mixed},  while the space pair no longer satisfies the condition $\nabla \cS_h \subset \cJ_h$.

From another point of view, suppose $\sig{s}$ and $\sig{a}$ are piecewise constant, the primal form is: \emph{find $\psi_h^{(0)} \in \cC_{h,\zero}$, such that
	\begin{equation}\label{eq-difflimPF}
	\sum_{K\in \cT_h} \int_K\left( \frac{1}{3\sig{s}} \Pi_0(\nabla \psi_h^{(0)}) \cdot \Pi_0(\nabla \vphi) + \sig{a}\psi_h^{(0)}\vphi\right)dx  = 		
	\sum_{K\in \cT_h} \int_K q \vphi dx,
	\end{equation}
	$\forall \vphi \in \cC_{h,\zero}$.}
For $P^1$ elements on triangular meshes, \eqref{eq-difflimPF} is identical to \eqref{eq-lmdg-cg}.
For $Q_1$ elements on Cartesian meshes, one can show that \eqref{eq-difflimPF} is unisolvent. Furthermore, $\|\Pi_0(\nabla \psi_h^{(0)})\|^2 + \|\psi_h^{(0)}\|^2 \leq \max(\frac{3\sig{s}}{2\sig{a}},\sig{a}^{-2}) \|q\|^2$, if $\sig{a}\geq \delta_{\mathrm{a}} >0$. While the accuracy is hard to analyze under the finite element framework. Assume a uniform square mesh with cell length $h$. Let $\sig{s}$ and $\sig{a}$ be globally constant. Then \eqref{eq-difflimPF} can be rewritten as a finite difference scheme under the Lagrange basis functions.
\begin{align}
-\frac{\psi_{i-1,j-1}+\psi_{i-1,j+1}-4\psi_{i,j}+\psi_{i+1,j-1}+\psi_{i+1,j+1}}{3\sig{s}\cdot 2h^2}
+ \sig{a}A[\psi_{i,j}] = & A[q_{i,j}],\\
A[\psi_j] :=  \frac{1}{36} \left(\psi_{i-1,j-1}+\psi_{i-1,j+1}+\psi_{i+1,j-1}+\psi_{i+1,j+1}\right) &\\
+  \frac{1}{9}\left(\psi_{i-1,j}+\psi_{i,j-1}+\psi_{i,j+1}+\psi_{i+1,j}\right)\ + \frac{4}{9}\psi_{i,j}.& \nonumber
\end{align}
The truncation error of the method is $\cO(h^2)$.

\ 

At the first glance,
$\cW_h = \cV_{h,0} + \cD_{h,1} + \Omega \cdot \cD_{h,1}$ seems to be the natural choice for constructing the low-memory scheme that preserves the correct diffusion limit. However, coupling between angles requires special treatment for reducing the system dimension. The extra moments $\Omega \cdot \cD_{h,1}^{d}$ will make the resulting system even larger than that of the original $S_N$-DG method. Although it may be worth to include extra moments for problems with anisotropic scattering, for which a large system has to be solved anyway, we avoid this option for solving \eqref{eq-main}. We therefore explore the other choice $\cW_h = \cV_{h,0} + \cD_{h,1}$ in the rest of the paper. 
\subsection{Low-memory scheme}
Based on the analysis and discussion of \Cref {subsec-asymptotic-subspaces}, we propose a scheme that uses the finite element space
\begin{equation}
\cV_h^{\lm} = \cV_{h,0}+\cD_{h,1}.
\end{equation} 
The low-memory $S_N$-DG scheme
is written as follows: \textit{find $\psi_h \in \cW_h$, such that
	\begin{equation}\label{eq-gene-scheme}
	B(\psi_h,v) = Q_{\alpha}(v), \qquad \forall v \in \cV_h^\lm,
	\end{equation}}
where $B$ and $Q_\alpha$ are defined in \eqref{eq-Bdef} and \eqref{eq-lalpha}, respectively.

We now show that this scheme can be implemented using sweeps; i.e., a strategy analagous to the \eqref{eq-sndg-phi} and \eqref{eq-sndg-final-sweep}, which relies heavily on the fast inversion of the operator $\bL$.  For simplicity, we only consider the case $\sig{s}$ being piecewise constant.  The implementation is based on the block matrix formulation \eqref{eq-sndg-mat} of the $S_N$-DG method:

\begin{equation}
\bL = \left[\begin{matrix} \bL_{00}&\bL_{01}\\ \bL_{10}&
\bL_{11}\end{matrix}\right], \quad
\bS = \left[\begin{matrix}\bM_0\bP_0&\\&\bM_1\bP_1\end{matrix}\right],\quand
\bQ = \left[\begin{matrix}\bQ_0\\\bQ_1\end{matrix}\right].
\end{equation}
Here $\bL_{rr'}$ are matrix blocks associated to $L(u,v)$ with $u\in \cV_{h,r'}, v\in \cV_{h,r}$. The sizes of $\bL_{00}$, $\bL_{01}$, $\bL_{10}$ and $\bL_{11}$ are $(n_\Omega\cdot n_x)\times (n_\Omega\cdot n_x)$, $(n_\Omega\cdot n_x)\times (n_\Omega\cdot n_x\cdot (n_P-1))$, $(n_\Omega\cdot n_x\cdot (n_P-1))\times (n_\Omega\cdot n_x)$ and $(n_\Omega\cdot n_x\cdot (n_P-1))\times (n_\Omega\cdot n_x\cdot (n_P-1))$, respectively.
The block $\bS_{rr'} = \bM_{r} \bP_{r'}$ is associated to $S(u,v)$ with $u\in \cV_{h,r'}, v\in \cV_{h,r}$;  it has the same size as $\bL_{rr'}$. The matrices $\bM_0$ and $\bP_0$ have dimensions $(n_\Omega\cdot n_x)\times n_x$ and $n_x \times(n_\Omega\cdot n_x)$, respectively; the matrices $\bM_1$ and $\bP_1$ have dimensions  $(n_\Omega\cdot n_x\cdot(n_P-1))\times (n_x\cdot(n_P-1))$ and $(n_x\cdot (n_P-1)) \times(n_\Omega\cdot n_x\cdot (n_P-1))$, respectively. The vector block $\bQ_r$ is associated to $Q_{\alpha}(v)$ for $v\in \cV_{h,r}$, with $\bQ_0$ an $n_\Omega\cdot n_x$ vector and $\bQ_1$ an $n_\Omega\cdot n_x\cdot (n_P-1)$ vector. 

Recall from \Cref{sc-basis} that for each $p$, $\{b^{p,r}\}_{r=0}^{n_P-1}$ forms a basis for $Z_1(K_p)$, and $\xi^{l,p,r}_j = \delta_{lj}b^{p,r}$. We further assume $\{b^{p,r}\}_{r=0}^{n_P-1}$ is an orthogonal set and $\{b^{p,0}\}_{p=1}^{n_x}$ is a set of constant functions on $K_p$. Then $\mathbb{B}_0 = \{\xi^{l,p,0}:l = 1,\dots,n_\Omega, p = 1,\dots,n_x\}$ and  $\mathbb{B}_1 = \{\xi^{l,p,r}:l = 1,\dots,n_\Omega, p = 1,\dots,n_x,r = 1,\dots,n_P-1\}$ are sets of basis functions for $\cV_{h,0}$ and $\cV_{h,1}$, respectively.
Let $\mathbb{B}_1^\lm = \{\eta^{p,r}: \eta^{p,r}_j = b^{p,r},j =1,\dots,n_\Omega, p = 1,\dots,n_x, r =1,\dots,n_P-1\}$. Then $\mathbb{B}_1^\lm$ 
is a set of basis for $\cD_{h,1}$. Hence $\cV_h^\lm = \tspan \{\mathbb{B}_0,\mathbb{B}_1^\lm\}$. 
The dimension of $\cV_h^\lm$ is then $n_\Omega\cdot n_x + n_x\cdot (n_P-1)$.
Because $\eta^{p,r}_j = b^{p,r} =  \sum_{l=1}^{n_\Omega}\delta_{lj}b^{p,r}= \sum_{l=1}^{n_\Omega} \xi_j^{l,p,r}$, there exists a mapping from $\mathbb{B}_1$ to $\mathbb{B}_1^\lm$
\begin{equation}
\eta^{p,r} = \sum_{l=1}^{n_\Omega}\xi^{l,p,r} = \sum_{l',p',r'=1}^{n_\Omega}\Sigma^{(p,r),(l',p',r')}\xi^{l',p',r'},
\end{equation}
where $\bSigma = (\Sigma^{(p,r),(l',p',r')})$ is an $(n_x\cdot (n_P-1))\times
(n_\Omega\cdot n_x \cdot (n_P-1))$ matrix with components 
$\Sigma^{(p,r),(l',p',r')} = \delta_{pp'}\delta_{rr'}$. The matrix $\bSigma$ corresponds to a 
summation operator that maps an angular flux to a scalar flux, while $\bSigma^T$ copies
the scalar flux to each angular direction. 

Let the solution of the low-memory method be represented by 
$\bPsi = \left[\bPsi_0, \bSigma^T\bPhi_1\right]^T$.
Using the fact $\bP_1\bSigma^T = \bI_{n_x\cdot (n_P-1)}$, one can show $\bPsi$ satisfies the equations
\begin{subequations}\label{eq-lmdg-block}
	\begin{gather}
	\bL_{00}\bPsi_0+\bL_{01}\bSigma^T\bPhi_1 = \bM_0\bP_0\bPsi_0 +
	\bQ_0,\label{eq-lmdg-block0}\noeqref{eq-lmdg-block0}\\
	\bSigma\bL_{10}\bPsi_0+\bSigma\bL_{11}\bSigma^T\bPhi_1 =
	\bSigma\bM_1\bPhi_1 + \bSigma\bQ_1.\label{eq-lmdg-block1}\noeqref{eq-lmdg-block1}
	\end{gather}
\end{subequations}
As that in the original $S_N$-DG method, the system dimension of \eqref{eq-lmdg-block} can be reduced with the following procedure. 

\

1. Solve for $\bPhi_1$ in terms of $\bPsi_0$ through \eqref{eq-lmdg-block1}: 
\begin{equation}\label{eq-lmdg-psi1}
\bPhi_1 = \bB_{11}^{-1}\bSigma \left(-\bL_{10} \bPsi_0 +
\bQ_1\right), \qquad \bB_{11} = \bSigma \bL_{11}\bSigma^T -
\bSigma\bM_1.
\end{equation}

2. Substitute $\bPhi_1$ from \eqref{eq-lmdg-psi1} into \eqref{eq-lmdg-block0} to
obtain a closed equation for $\bPsi_0$:
\begin{equation}
\begin{aligned}\label{eq-lmdg-psi0}
\bPsi_0 
- \bL_{00}^{-1} \bM_0 (\bP_0 \bPsi_0)  
&-\bL_{00}^{-1} \bL_{01}\bSigma^T(\bB_{11}^{-1} \bSigma\bL_{10} \bPsi_0) 
= \bL_{00}^{-1} (\bQ_0-
\bL_{01}\bSigma^T\bB_{11}^{-1}\bSigma\bQ_1).
\end{aligned}
\end{equation}

3. Apply $\bP_0$ and $\bSigma\bL_{10}$ to \eqref{eq-lmdg-psi0} to 
obtain a closed system for $\bX_0 = \bP_0\bPsi_0$ and {$\bX_1 = \bB_{11}^{-1}\bSigma\bL_{10}\bPsi_0$}:
\begin{equation}\label{eq-lmdg-reducemat}
\begin{aligned}
\bK
\left[\begin{matrix}\bX_0\\\bX_{1} \end{matrix}\right]=\left[\begin{matrix}
\bP_0\\
\bSigma\bL_{10}
\end{matrix}\right]\bL_{00}^{-1} (\bQ_0-
\bL_{01}\bSigma^T\bB_{11}^{-1}\bSigma\bQ_1),
\end{aligned}
\end{equation}
where
\begin{equation}
\bK = \left[\begin{matrix}
\bI_{n_x}- \bP_0\bL_{00}^{-1} \bM_0 & -\bP_0\bL_{00}^{-1}\bL_{01}\bSigma^T\\
- \bSigma\bL_{10}\bL_{00}^{-1} \bM_0 &\bB_{11} -\bSigma\bL_{10}\bL_{00}^{-1}\bL_{01}\bSigma^T \\
\end{matrix}\right].
\end{equation}

4. Solve for $\bX_0$ and $\bX_1$ in \eqref{eq-lmdg-reducemat}. 
Then use \eqref{eq-lmdg-psi0} and \eqref{eq-lmdg-psi1} to obtain $\bPsi$:
\begin{subequations}
	\label{eq-lm-step4}
	\begin{align}
	\label{eq-lm-step4-phi}
	\bPsi_0 &=  \bL_{00}^{-1} \bM_0 \bX_0 + \bL_{00}^{-1}\bL_{01}\bSigma^T\bX_1 + \bL_{00}^{-1} (\bQ_0-
	\bL_{01}\bSigma^T\bB_{11}^{-1}\bSigma\bQ_1),\\
	\label{eq-lm-step4-psi}
	\bPhi_1 &= \bB_{11}^{-1} \bSigma\left(-\bL_{10} \bPsi_0 + \bQ_1\right).
	\end{align}
\end{subequations}

\

Only Step 4 above is needed to implement the algorithm.
If one solves for $\bPsi_0$ directly from \eqref{eq-lmdg-psi0}, 
then an $(n_\Omega\cdot n_x)\times(n_\Omega\cdot n_x)$ matrix should be
inverted. While with \eqref{eq-lmdg-reducemat}, 
the matrix dimensions are reduced to $(n_x\cdot n_P)\times(n_x\cdot n_P)$. 
Typically  $n_P$ is much smaller than $n_\Omega$.

We state the following theorems on the invertibility of $\bB_{11}$ and $\bK$, whose proof can be found in \Cref{ap-B11} and \Cref{ap-K}, respectively.

\begin{THM}\label{lem-B11}
	$\bB_{11}$ is invertible.
	Furthermore, if the quadrature rule is central symmetric, then
	$\bB_{11}$ is symmetric positive definite. Here, central symmetry means $\Omega_j$ and $-\Omega_j$ are
	both selected in the quadrature rule and their weights are equal $w_j = w_{-j}$. 
\end{THM}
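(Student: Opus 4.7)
The plan is to identify $\bB_{11}$ as the stiffness matrix of the bilinear form $B(\cdot,\cdot)$ restricted to the isotropic subspace $\cD_{h,1}$, and then establish both claims via an energy argument. Using $\bP_1\bSigma^T = \bI_{n_x\cdot(n_P-1)}$, a direct computation shows that for $u,v \in \cD_{h,1}$ with $\eta$-basis coefficient vectors $\bPhi,\bPhi'$, one has $(\bPhi')^T \bB_{11} \bPhi = B(u,v)$. Hence invertibility will follow from $B(u,u) > 0$ for all nonzero $u \in \cD_{h,1}$, and symmetry of $\bB_{11}$ from $B(u,v) = B(v,u)$ on $\cD_{h,1}$.

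For the first claim, I would substitute the isotropic components $u_j = \phi$, $v_j = \psi$ (with zero cell averages) into $B = L - S$. Two cancellations simplify the form: the volume gradient term $-\sum_j w_j \int_K \phi\,\Omega_j\cdot \nabla \psi\,dx$ vanishes by $\sum_j w_j \Omega_j = 0$, and the two $\sig{s}/\veps$ contributions in $L$ and $S$ cancel since $\bar u = \phi$ and $\sum_j w_j = 1$. What remains is
\begin{equation*}
B(u,v) = \int_D \veps\,\sig{a}\,\phi\psi\,dx + \sum_{K\in \cT_h}\sum_{j=1}^{n_\Omega} w_j \int_{\partial K} \hat\phi_j\,\Omega_j\cdot \nu_K\,\psi^{\rm int}\,dx.
\end{equation*}
Testing with $v = u$ and applying the standard upwind DG identity in each direction $\Omega_j$ (completing the square produces face jumps and outflow boundary traces, while the extra $\tfrac12 (\phi^{\rm int})^2$ pieces sum to zero after $w_j\Omega_j$-averaging) yields
\begin{equation*}
B(u,u) = \int_D \veps\,\sig{a}\,\phi^2\,dx + \tfrac12 \lb u \rb^2 + \tfrac12 \sum_{j=1}^{n_\Omega} w_j \int_{\partial D,\,\Omega_j\cdot \nu>0} (\phi^{\rm int})^2\,\Omega_j\cdot \nu\,dx.
\end{equation*}
All three terms are non-negative and the first is bounded below by $\veps\,\delta_{\rm a}\,\|\phi\|^2$; hence $B(u,u)>0$ whenever $\phi \neq 0$. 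Injectivity follows, and since $\bB_{11}$ is square it is invertible.

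For the second claim, I would verify symmetry of the remaining upwind boundary term under central symmetry of the quadrature. On each interior face $F$ with prescribed normal $\nu_F$, the four contributions from inflow/outflow ordinates on both sides of $F$ can be regrouped by pairing $\Omega_j \leftrightarrow -\Omega_j$ (equal weights by assumption); the collected face sum collapses to $(\phi^- - \phi^+)(\psi^- - \psi^+)\sum_{\Omega_j\cdot\nu_F>0} w_j|\Omega_j\cdot\nu_F|$, which is manifestly symmetric in $(\phi,\psi)$. The $\partial D$ contributions involve only outflow ordinates (the inflow upwind trace is zero by convention) and are already symmetric without pairing. Combined with the positivity from the energy identity above, this gives $\bB_{11}$ symmetric positive definite. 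The main obstacle is the bookkeeping in this pairing---tracking signs of $\Omega_j\cdot\nu_F$ on both sides of $F$ and matching the upwind trace $\hat\phi_j$ across adjacent cells---but once the pairing is set up, the result follows directly from the quadrature conditions in \eqref{eq-polyquad} and the zero-mean property defining $\cD_{h,1}$.
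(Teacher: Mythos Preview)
Your approach is essentially the same as the paper's: both identify $\bB_{11}$ with the bilinear form $B$ restricted to $\cD_{h,1}$, note the same two cancellations (the volume gradient term via $\sum_j w_j\Omega_j = 0$ and the $\sig{s}/\veps$ terms via isotropy), and establish symmetry by pairing $\Omega_j\leftrightarrow -\Omega_j$ on each face under central symmetry of the quadrature. The one noteworthy difference is in how invertibility is obtained: the paper simply invokes the general unisolvency result (\Cref{thm-gene-uni-solvency}), which applies to any subspace $\cW_h\subset\cV_h$ and, as the paper notes, remains valid even when $\sig{a}=0$; your direct energy argument instead relies on the standing assumption $\sig{a}\ge\delta_{\rm a}>0$ to force $\phi=0$ from $B(u,u)=0$. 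Both routes are valid under the paper's hypotheses, but the paper's is slightly more general. (A minor bookkeeping slip: the coefficient $\tfrac12$ on your boundary outflow term should not be there---the outflow contribution is $\sum_j w_j\int_{\partial D,\,\Omega_j\cdot\nu>0}(\Omega_j\cdot\nu)(\phi^{\rm int})^2$ without the $\tfrac12$---but this does not affect the non-negativity conclusion.)
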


\begin{THM}\label{thm-K}
	$\bK$ is invertible.
\end{THM}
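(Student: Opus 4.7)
The plan is to reduce invertibility of $\bK$ to the unique solvability guaranteed by \Cref{thm-gene-uni-solvency}, specialized to the subspace $\cW_h = \cV_h^{\lm}$. Specifically, I would show the null space of $\bK$ is trivial by constructing, from any vector in the kernel, a function $\psi_h \in \cV_h^{\lm}$ that solves the homogeneous weak problem $B(\psi_h,v) = 0$ for all $v \in \cV_h^{\lm}$, and then forcing $\psi_h = 0$ via uniqueness.

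Before the main argument, I would record two preliminary facts. First, $\bL_{00}$ is invertible: the block $\bL_{00}$ represents the bilinear form $L(\cdot,\cdot)$ restricted to the piecewise-constant subspace $\cV_{h,0}$, and a standard integration-by-parts computation shows that
\begin{equation}
L(u,u) = \tfrac{1}{2}\lb u\rb^2 + \tfrac{1}{2}\sum_{j=1}^{n_\Omega} w_j\int_{\partial D}|\Omega_j\cdot n|(u_j^{\mathrm{int}})^2 dx + \int_D \Big(\tfrac{\sig{s}}{\veps}+\veps\sig{a}\Big)\sum_{j=1}^{n_\Omega} w_j u_j^2 dx,
\end{equation}
so $L$ is coercive on $\cV_h$ and hence on $\cV_{h,0}$. (Equivalently, each angular block of $\bL_{00}$ is triangular under a sweep ordering with strictly positive diagonal.) Second, $\bSigma^T$ has full column rank since $\bP_1\bSigma^T = \bI_{n_x\cdot(n_P-1)}$, so $\bSigma^T \bPhi_1 = 0$ implies $\bPhi_1 = 0$.

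For the main step, suppose $\bK[\bX_0^T,\bX_1^T]^T = 0$, and define
\begin{equation}
\bPsi_0 := \bL_{00}^{-1}\bigl(\bM_0 \bX_0 + \bL_{01}\bSigma^T\bX_1\bigr), \qquad \bPhi_1 := -\bX_1, \qquad \bPsi := [\bPsi_0^T,\ (\bSigma^T\bPhi_1)^T]^T.
\end{equation}
The first block row of $\bK[\bX_0^T,\bX_1^T]^T=0$ gives $\bX_0 = \bP_0\bPsi_0$, while the second gives $\bB_{11}\bX_1 = \bSigma\bL_{10}\bPsi_0$, which combined with the definition $\bB_{11} = \bSigma\bL_{11}\bSigma^T - \bSigma\bM_1$ yields $\bSigma\bL_{10}\bPsi_0 + \bSigma\bL_{11}\bSigma^T\bPhi_1 = \bSigma\bM_1\bPhi_1$. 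A direct substitution then shows that $\bPsi$ satisfies the block system \eqref{eq-lmdg-block} with $\bQ_0 = 0$ and $\bQ_1 = 0$, i.e., $\psi_h$ (the function corresponding to $\bPsi$) is a solution of \eqref{eq-gene-scheme} with $q = 0$ and $\alpha = 0$. By \Cref{thm-gene-uni-solvency} applied to $\cW_h = \cV_h^{\lm}$, the only such solution is $\psi_h = 0$, so $\bPsi_0 = 0$ and $\bSigma^T\bPhi_1 = 0$; the full-column-rank property of $\bSigma^T$ then forces $\bPhi_1 = 0$, hence $\bX_1 = 0$, and consequently $\bX_0 = \bP_0\bPsi_0 = 0$. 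The main obstacle is purely bookkeeping: one has to carry the sign convention $\bPhi_1 = -\bX_1$ (inherited from moving $\bSigma\bL_{10}\bPsi_0$ across the equality in \eqref{eq-lmdg-psi1}) through the substitutions so that both block equations of \eqref{eq-lmdg-block} reduce to identities; no new analytic ingredient beyond \Cref{thm-gene-uni-solvency} is required.
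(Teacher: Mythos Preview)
Your argument is correct. Both your proof and the paper's reduce invertibility of $\bK$ to the unisolvency of the low-memory variational problem (\Cref{thm-gene-uni-solvency}), but the routes differ. The paper first observes that unisolvency makes the operator in \eqref{eq-lmdg-psi0} invertible, then invokes an abstract matrix lemma (if $\bI_m-\bC\bA-\bD\bB$ is invertible then so is $\bI_{n+n'}-\begin{bmatrix}\bA\\\bB\end{bmatrix}\begin{bmatrix}\bC&\bD\end{bmatrix}$), and finally factors $\bK$ as that matrix times $\operatorname{diag}(\bI_{n_x},\bB_{11})$. You instead work directly in the kernel of $\bK$: from $[\bX_0;\bX_1]$ you build $\bPsi_0$ and $\bPhi_1=-\bX_1$, verify they satisfy the homogeneous block system \eqref{eq-lmdg-block}, and invoke uniqueness. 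Your route is more self-contained and avoids the detour through \eqref{eq-lmdg-psi0} and \Cref{lem-B11}; the paper's route has the advantage that the abstract lemma and factorization are reusable (the same template is applied to $\widetilde{\bK}$ in the reconstructed scheme).
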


\begin{REM}
	Typically, the linear system in such context is solved using the Krylov
	method, in which one needs to evaluate the multiplication 
	of a vector with $\bK$ in each iteration. We can use 
	the following formula to avoid repeated evaluation 
	in the left multiplication of $\bK$. 
	\begin{equation}\label{eq-nonrec-block}
	\bK \left[\begin{matrix}\bX_0\\\bX_{1} \end{matrix}\right]=
	\left[\begin{matrix}
	\bI_{n_x} &   \\
	&\bB_{11}\\
	\end{matrix}\right]
	\left[\begin{matrix}
	\bX_0 \\ \bX_1
	\end{matrix}\right]
	-
	\left[\begin{matrix}
	\bP_0    \\
	\bSigma\bL_{10}\\
	\end{matrix}\right]\bL_{00}^{-1} 
	\left(\bM_0\bX_0+\bL_{01}\bSigma^T\bX_1\right).
	\end{equation}
\end{REM}

\begin{REM}
	As demonstrated in \cite{heningburg2019hybrid}, the inversion of the block $\bL_{00}$ in \eqref{eq-lmdg-reducemat}, rather than the full matrix $\bL$ in \eqref{eq-sndg-phi}, results in a significant savings in terms of floating point operations (and hence time-to-solution).  This savings will be partially offset by the need to invert the matrix $\bB _{11}$ in \eqref{eq-lmdg-psi1}.  However, since the overall effect on time-to-solution depends heavily on the details of implementation, we do not investigate this aspect of the low-memory method in the  numerical results, but instead leave such an investigation to future work.
\end{REM}

\subsection{Reconstructed low-memory scheme}\label{sc-rlmdg}
Because the low-memory scheme couples the angular components of $\cV_{h,1}$, it is only first-order for fixed $\veps > 0$.  To recover second-order accuracy (formally), we introduce a spatial reconstruction procedure to approximate the anisotropic parts of $\cV_{h,1}$.

\subsubsection{Numerical scheme}\label{sc-rec}

We denote by
$\Pi_i$ the orthogonal projection from $\cV_{h}$ to $\cV_{h,i}$, $i = 0,1$.
The only information from the low-memory space $\cV_h^{\lm}$ retains from $v \in \cV_{h,1}$ is $\overline{\Pi_1(v)}$; the information contained in
$\Pi_1 (v) - \overline{\Pi_1 (v)}$ is missing. We therefore introduce an operator $R_\alpha^*v =
R_\alpha \Pi_0(v) - \overline{R_\alpha \Pi_0(v)}$, where $R_\alpha\Pi_0$ is an operator that returns the reconstructed 
slopes using piecewise constants and the boundary condition $\alpha$, to rebuild the difference.
Then the reconstructed scheme is written as: \textit{find $\psi_h \in \cV_h^\lm$ such that
	\begin{equation}\label{eq-rlmdg-vari}
	B(\psi_h + R_\alpha^*\psi_h, v) = Q_{\alpha}(v),\qquad 
	\forall v\in \cV_h^\lm.
	\end{equation}}
The reconstruction $\psi_h + R_\alpha^*\psi_h$ then gives a more accurate approximation to $\Psi$. 

Equivalently, by assembling all boundary terms into the right hand side, the
reconstructed scheme can also be formulated as a Petrov--Galerkin method with 
trial function space 
\begin{equation}\cV_h^{\rlm} = \{v + R_0^*v: v\in \cV_{h}^{\lm}\}.\end{equation}
Since $R_0^* 0 = 0$, $\cV_h^{\rlm}$ is in fact a linear space.  With this formulation, the reconstructed method solves the following problem: \textit{find $\psi_{h,R_0} \in
	\cV_{h}^{\rlm}$, such that 
	\begin{equation}\label{eq-rlmdg-scheme}
	B(\psi_{h,R_0},v) = \widetilde{Q}_{\alpha}(v), \qquad \forall v \in \cV_h^\lm.
	\end{equation}}

The use of different trial and test functions spaces make the analysis of this scheme less transparent.
Currently, we have no theoretical guarantee of unisolvency or the numerical diffusion limit.  We observe, however, that the method recovers second-order convergence for several different test problem across a wide range of $\veps$.

\

In this paper, we apply the reconstruction  suggested in \cite{heningburg2019hybrid} to recover slopes for simplicity, although in general other upwind approaches can also be used\footnote{For example, one can apply upwind reconstruction with wider stencils to improve the accuracy with an increased computational costs.
	Furthermore, the reconstruction can also be different at different spatial cells along different collocation angles, which may lead to an adaptive version of the reconstructed method. We postpone the discussion on numerical efficiency with different reconstruction methods to future work.}.
For illustration,
we consider a uniform Cartesian mesh
on $[0,1]\times [0,1]\times [0,1]$. The grid points are labeled from
$\frac{1}{2}$
to $n+\frac{1}{2}$ respectively. We denote by $u^0_{i,j,k}$ the cell average of
$u$ on the cell $K_{i,j,k}$ that 
centers at $(x_i, y_j, z_k)$. Along each direction
$\Omega = (\Omega_x,\Omega_y,\Omega_z)$, 
\begin{equation}(R_\alpha\Pi_0 (u))|_{K_{i,j,k}} = (\delta_x^{s_x}u^0_{i,j,k}) (x - x_i) +
(\delta_y^{s_y}u^0_{i,j,k}) (y - y_j) + (\delta_z^{s_z}u^0_{i,j,k}) (z -
z_k),\end{equation}
where $s_x = - \sign (\Omega_x)$,
\begin{align}
\delta^-_x u^0_{i,j,k} &= 
\begin{cases}
\dfrac{u^0_{i,j,k} -	u^{0}_{i-1,j,k}}{h}, 
& 2\leq i\leq n, \\
\dfrac{u^{0}_{1,j,k}-\alpha(\Omega,(0,y_j,z_k))}{h/2},
& i = 1,
\end{cases} \\
\delta^+_x u^0_{i,j,k} &= 
\begin{cases}
\dfrac{u^{0}_{i+1,j,k} - u^0_{i,j,k}}{h}
& 1\leq i\leq n-1, \\
\dfrac{\alpha(\Omega,(1,y_j,z_k))-u^{0}_{n,j,k}}{h/2},
& i = n.
\end{cases} 
\end{align}
$\delta_y^\pm$ and $\delta_z^\pm$ are defined similarly. For numerical results in the next section, we only reconstruct the $P^1$ slopes to recover the second-order accuracy; 
$Q^1$ type reconstruction gives similar results in terms of the
convergence rate.
\subsubsection{Implementation}

Let $\mathbb{B}_0^\rlm= \{\xi^{l,p,0}+R_0^*\xi^{l,p,0}:l=1,\dots,n_\Omega, p =1,\dots,n_x\}$
and $\cV_h^{\rlm} = \tspan \{\mathbb{B}_0^\rlm,\mathbb{B}_1^\lm\}$. 
As in the first-order method, the total degrees of freedom is $n_\Omega\cdot n_x+ n_x\cdot(n_P-1)$. The
boundary terms are assembled into a vector $\br_\alpha$. Here we use 
$\bPsi = \left[\bPsi_0, \bPsi_1 \right]^T
$
to represent the solution of the reconstructed method,  where $\bPsi_1 = \bSigma^T\bPhi_1+(\bI_{n_\Omega\cdot
	n_x\cdot(n_P-1)}-\bSigma^T\bP_1)(\bR
\bPsi_0 +\br_\alpha)$. Note $\bP_1\bSigma^T = \mathbf{I}_{n_x\cdot (n_P-1)}$, which implies $\bP_1\bPsi_1 = \bPhi_1$. The block matrix form can then be written as follows.
\begin{subequations}\label{eq-rlmdg-block}
	\begin{eqnarray}
	\bL_{00}\bPsi_0+\bL_{01}\bPsi_1&=& \bM_0\bP_0\bPsi_0 +
	\bQ_0,\\
	\bSigma\bL_{10}\bPsi_0+\bSigma\bL_{11}
	\bPsi_1 &=&
	\bSigma\bM_1\bPhi_1 + \bSigma\bQ_1.
	\end{eqnarray}
\end{subequations}
With
\begin{eqnarray}
\widetilde{\bL}_{00} &=& \bL_{00} + \bL_{01} \bR,\\
\widetilde{\bL}_{10} &=& \bL_{10} + \bL_{11}(\bI_{n_\Omega\cdot n_x\cdot(n_P-1)}-\bSigma^T\bP_1)\bR,\\
\widetilde{\bQ}_0 &=& \bQ_0 - \bL_{01}(\bI_{n_\Omega\cdot
	n_x\cdot(n_P-1)}-\bSigma^T\bP_1)\br_\alpha,\\
\widetilde{\bQ}_1 &=& \bQ_1 - \bL_{11}(\bI_{n_\Omega\cdot
	n_x\cdot(n_P-1)}-\bSigma^T\bP_1)\br_\alpha,
\end{eqnarray}
one can rewrite \eqref{eq-rlmdg-block} as
\begin{subequations}\label{eq-rlmdg-block-tilde}
	\begin{eqnarray}
	\widetilde{\bL}_{00}\bPsi_0+\bL_{01}\bSigma^T\left(\bPhi_1 -\bP_1\bR
	\bPsi_0 \right)&=& \bM_0\bP_0\bPsi_0 +
	\widetilde{\bQ}_0,\label{eq-rlmdg-block0}\\
	\bSigma\widetilde{\bL}_{10}\bPsi_0+\bSigma\bL_{11}\bSigma^T\bPhi_1
	&=&
	\bSigma\bM_1\bPhi_1 + \bSigma\widetilde{\bQ}_1.\label{eq-rlmdg-block1}
	\noeqref{eq-rlmdg-block0,eq-rlmdg-block1}
	\end{eqnarray}
\end{subequations}
We follow the procedure as before to reduce the system dimension.

\

1. Solve for $\bPhi_1$ in terms of $\bPsi_0$ through \eqref{eq-rlmdg-block1}: 
\begin{equation}\label{eq-rlmdg-psi1}
\bPhi_1 = \bB_{11}^{-1} \bSigma\left(-\widetilde{\bL}_{10} \bPsi_0 +
\widetilde{\bQ}_1\right), \qquad \bB_{11} = \bSigma {\bL}_{11}\bSigma^T -
\bSigma\bM_1.
\end{equation}

2. Substitute $\bPhi_1$ from \eqref{eq-rlmdg-psi1} into \eqref{eq-rlmdg-block0} to
obtain a closed equation for $\bPsi_0$:
\begin{equation}\label{eq-rlmdg-psi0}
\begin{aligned}
\bPsi_0 
- \widetilde{\bL}_{00}^{-1} \bM_0 (\bP_0 \bPsi_0) 
&-\widetilde{\bL}_{00}^{-1}  {\bL}_{01}\bSigma^T(\bB_{11}^{-1}
\bSigma\widetilde{\bL}_{10} +\bP_1
\bR)\bPsi_0\\
&= \widetilde{\bL}_{00}^{-1} (\widetilde{\bQ}_0-
{\bL}_{01}\bSigma^T\bB_{11}^{-1}\bSigma\widetilde{\bQ}_1).
\end{aligned}
\end{equation}

3. Applying $\bP_0$ and $\bSigma\widetilde{\bL}_{10}$ to \eqref{eq-rlmdg-psi0}, to obtain a closed system for $\bX_0 = \bP_0\bPsi_0$ and $\bX_1 = (\bB_{11}^{-1}\bSigma\widetilde{\bL}_{10}+\bP_1\bR)\bPsi_0$:
\begin{equation}\label{eq-rlmdg-reducemat}
\begin{aligned}
\widetilde{\bK}
\left[\begin{matrix}\bX_0\\\bX_{1} \end{matrix}\right]
&=\left[\begin{matrix}
\bP_0\\
\bSigma\bL_{10}
\end{matrix}\right]\widetilde{\bL}_{00}^{-1} (\widetilde{\bQ}_0-
{\bL}_{01}\bSigma^T\bB_{11}^{-1}\bSigma\widetilde{\bQ}_1),
\end{aligned}
\end{equation}		
where
\begin{equation}
\widetilde{\bK} =	\left[\begin{matrix}
\bI_{n_x}- \bP_0\widetilde{\bL}_{00}^{-1} \bM_0 &
-\bP_0\widetilde{\bL}_{00}^{-1}{\bL}_{01}\bSigma^T\\
- \bSigma\widetilde{\bL}_{10}\widetilde{\bL}_{00}^{-1} \bM_0 &\bB_{11}
-\bSigma\widetilde{\bL}_{10}\widetilde{\bL}_{00}^{-1}{\bL}_{01}\bSigma^T \\
\end{matrix}\right].
\end{equation}

4. Solve for $\bX_0$ and $\bX_1$ in \eqref{eq-rlmdg-reducemat}. 
Use \eqref{eq-rlmdg-psi0} and \eqref{eq-rlmdg-psi1} to recover $\bPsi$.
\begin{eqnarray}
\bPsi_0 &=& \widetilde{\bL}_{00}^{-1}  \bL_{01}\bX_0 + \widetilde{\bL}_{00}^{-1}
\bM_0 \bX_1 + \bL_{00}^{-1} (\widetilde{\bQ}_0-
{\bL}_{01}\bSigma^T\bB_{11}^{-1}\bSigma\widetilde{\bQ}_1),\\
\bPhi_1 &=& \bB_{11}^{-1}\bSigma \left(-\widetilde{\bL}_{10} \bPsi_0 +
\widetilde{\bQ}_1\right).
\end{eqnarray}

\

As the first-order method, only Step 4 is used in the implementation. Since only upwind information is used, $\widetilde{\bL}_{00}$ is invertible and can be inverted with sweeps along each
angular direction. Note $\bB_{11}$ is invertible, as has been pointed out in \Cref{ap-B11}. One can follow the argument in \Cref{ap-K} to show $\widetilde{\bK}$ is invertible if the scheme \eqref{eq-rlmdg-vari} is unisolvent.

\section{Numerical tests}\label{sc-num}
\setcounter{equation}{0}
\setcounter{figure}{0}
\setcounter{table}{0}
In this section, we present numerical tests to examine performance of the methods.

\subsection{One dimensional tests (slab geometry)} 
In slab geometries, the radiative transport equation takes the form (see, e.g., \cite[Page 28]{Lewis-Miller-1984}). 
\begin{align}\label{eq-1d}
\mu \partial_x\psi (\mu,x) + \left(\frac{\sig{s}}{\veps} +\veps
\sig{a}\right) \psi(\mu,x) &=
\frac{\sig{s}}{2\veps}\int_{-1}^1\psi(\mu',x)d\mu' + \veps
q(x),\\
\psi(\mu,x_a) = \psi_l(\mu), \text{ if }\mu\geq0, &\quand \psi(\mu,x_b) = \psi_r(\mu), \text{ if }\mu<0,
\end{align}
where $x\in [x_a,x_b]$ and $\mu \in [-1,1]$. We will compare the $S_N$-$P^0$-DG scheme, $S_N$-$P^1$-DG scheme, low-memory
scheme (LMDG)
and the reconstructed scheme (RLMDG). Numerical error is evaluated in $L^1$ norm.
\begin{example}\label{examp-1dfab} 
	We first examine convergence rates of the methods using fabricated solutions. Let $\veps = 1$, $\sig{s} = 1$, 
	$\sig{a} = 1$ and $D = [0,1]$.
	Assuming the exact solution $\psi$, we compute the source term 
	$q$ and the inflow boundary conditions $\psi_l$ and $\psi_r$ accordingly. 
	With this approach, it may happen that $q$ depends on $\mu$.  We use the $32$ points Gauss quadrature on $[-1,1]$ for $S_N$ discretization.
	
	We consider the case $\psi = \cos x$ and $\psi = \cos(x+\mu)$. The 
	results are documented in \Cref{tab-1d-aniso}. 
	When $\psi$ is isotropic, the low-memory scheme
	exhibits second-order convergence. 
	For the anisotropic case, the LMDG scheme
	degenerates to first-order accuracy, while the RLMDG scheme
	remains second-order accurate. 
	\begin{table}[h!]
		\footnotesize
		\centering
		\begin{tabular}{c|c|c|c|c|c|c|c|c|c}
			\hline
			&&\multicolumn{2}{c|}{$P^0$-DG}&\multicolumn{2}{c|}{$P^1$-DG}&\multicolumn{2}{c|}{LMDG
			}&\multicolumn{2}{c}{RLMDG}\\
			\hline
			$\psi$&$h$&error&order&error&order&error&order&error&order\\
			\hline
			\multirow{2}{*}{$\cos x$} 
			&$1/20$ &3.79e-3&    - & 2.48e-5&    -& 2.24e-5&    -& 9.14e-5&
			-  \\
			&$1/40$ &1.91e-3& 0.99 & 6.27e-6& 1.98& 5.62e-6& 2.00& 2.31e-5&
			1.99\\
			&$1/80$ &9.55e-4& 1.00 & 1.58e-6& 1.99& 1.41e-6& 2.00& 5.80e-6&
			1.99\\
			&$1/160$&4.78e-4& 1.00 & 3.96e-7& 2.00& 3.52e-7& 2.00& 1.45e-6&
			2.00\\
			\hline
			\multirow{2}{*}{$\cos (x+\mu)$} 
			&$1/20$&  4.70e-3&     -& 2.11e-5&   -& 3.20e-3&    -& 7.74e-5&
			- \\ 
			&$1/40$&  2.36e-3&  0.99& 5.36e-6&1.98& 1.60e-3& 1.00& 1.95e-5&
			1.99\\
			&$1/80$&  1.18e-3&  1.00& 1.35e-6&1.99& 8.01e-4& 1.00& 4.89e-6&
			1.99\\
			&$1/160$& 5.91e-4&  1.00& 3.39e-7&1.99& 4.01e-4& 1.00& 1.23e-6&
			2.00\\
			\hline
		\end{tabular}
		\caption{Accuracy test for \Cref{examp-1dfab}.}\label{tab-1d-aniso}
	\end{table}
	
	\begin{figure}
		\begin{subfigure}[b]{0.3\textwidth}
			\includegraphics[width=\textwidth]{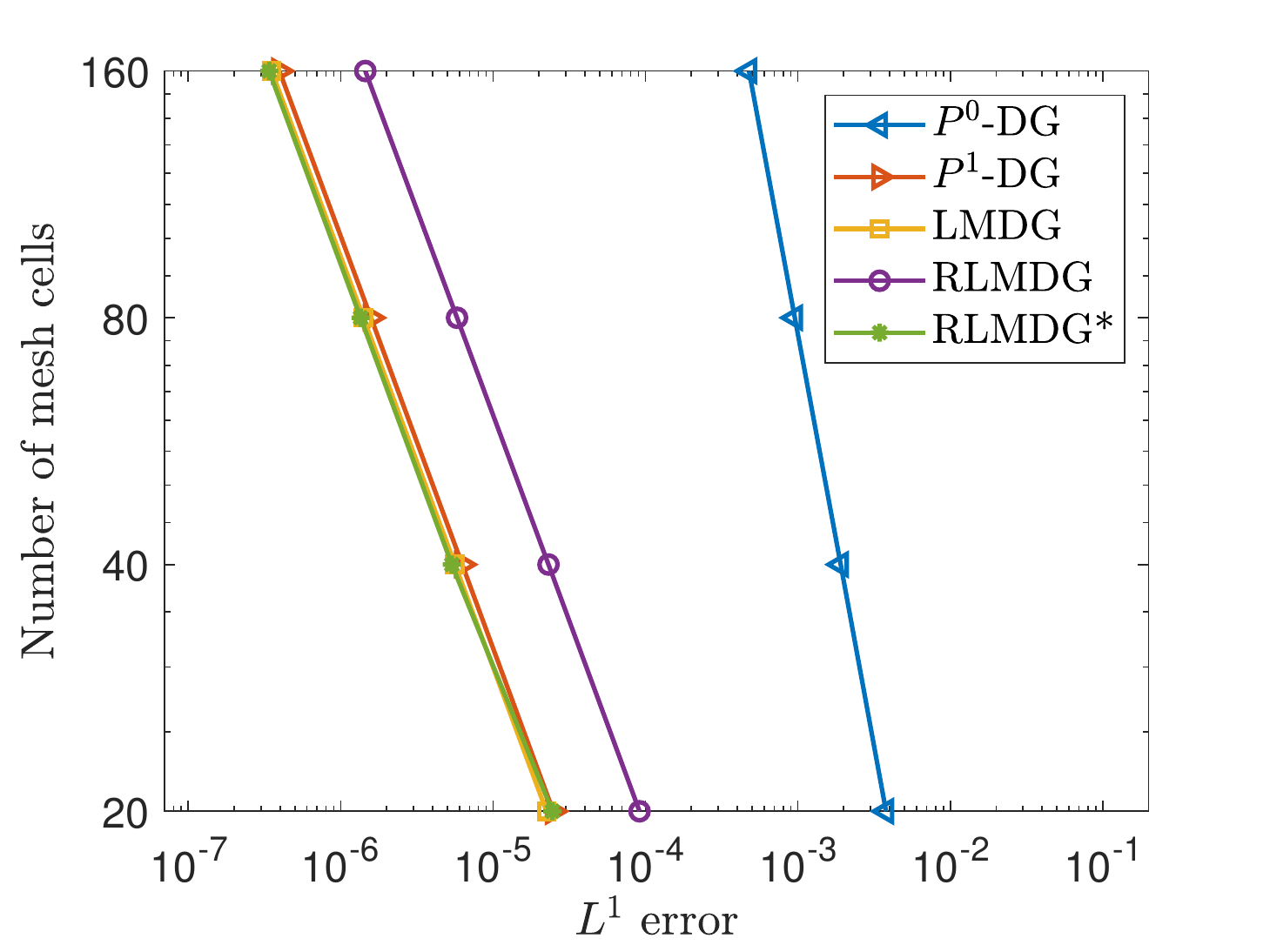}  
			\caption{Number of mesh cells.}
		\end{subfigure}
		~
		\begin{subfigure}[b]{0.3\textwidth}
			\includegraphics[width=\textwidth]{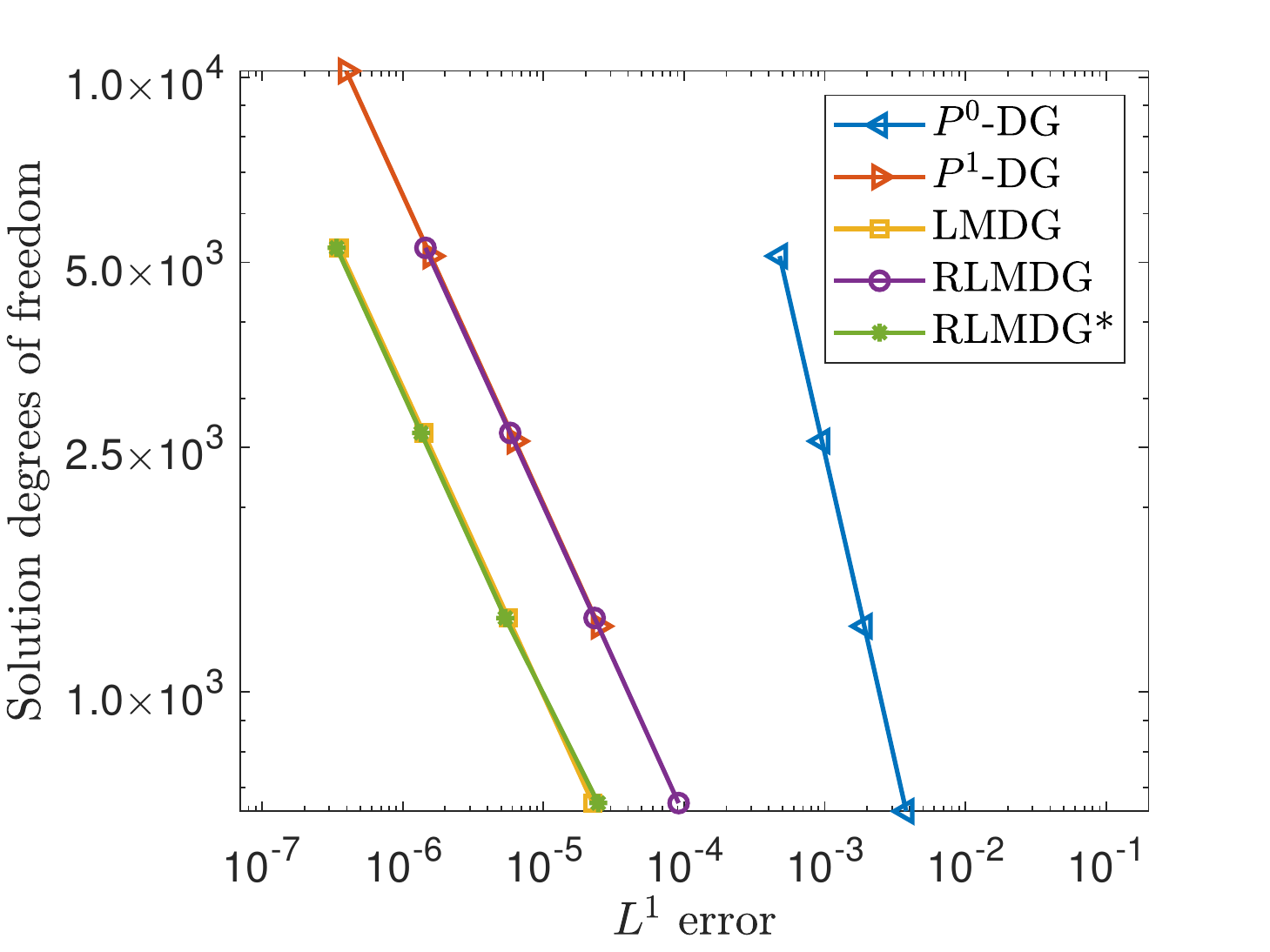}  
			\caption{Degrees of freedom.}
		\end{subfigure}
		~
		\begin{subfigure}[b]{0.3\textwidth}
			\includegraphics[width=\textwidth]{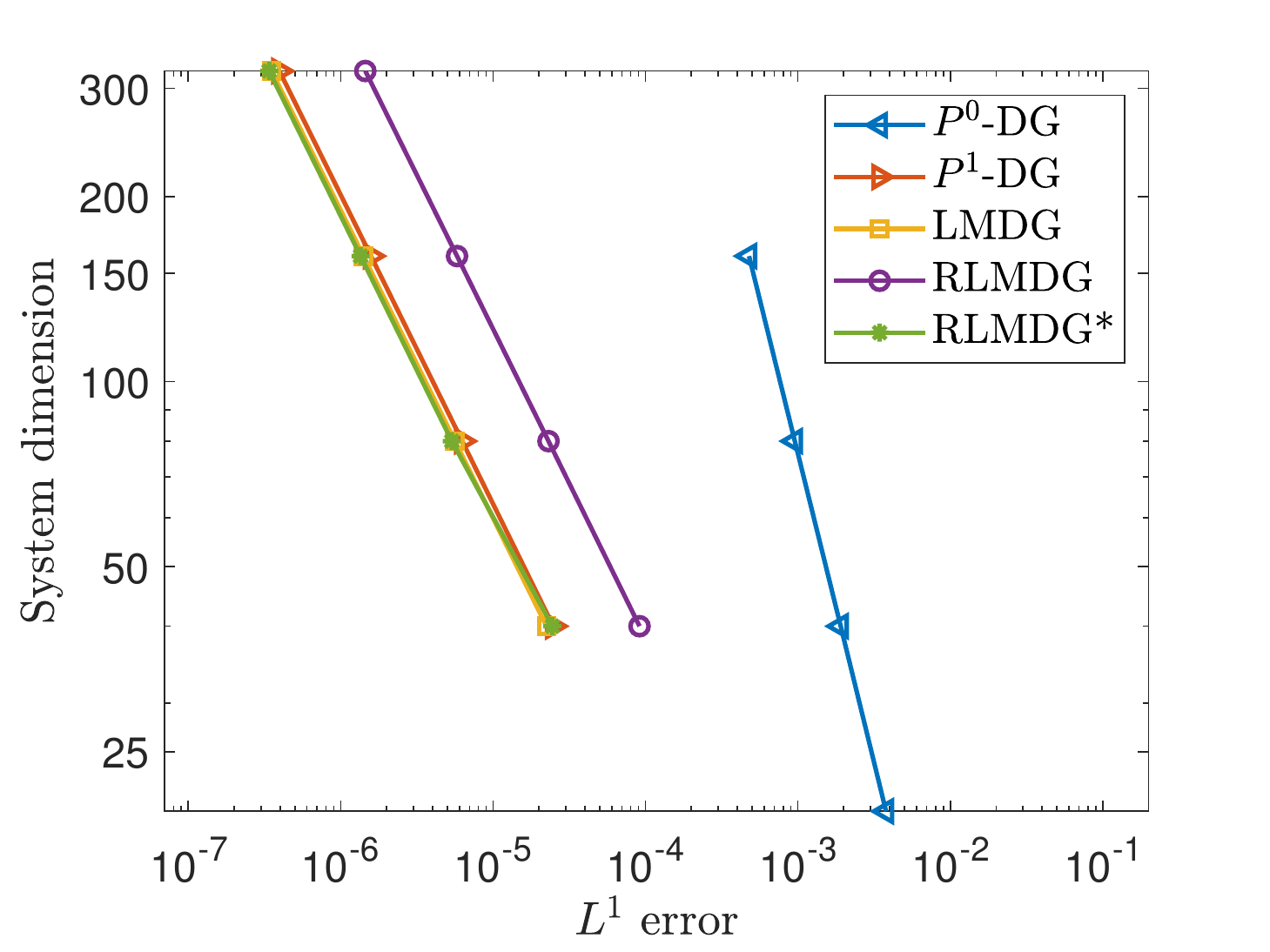}  
			\caption{System dimension.}
		\end{subfigure}
		\\
		\begin{subfigure}[b]{0.3\textwidth}
			\includegraphics[width=\textwidth]{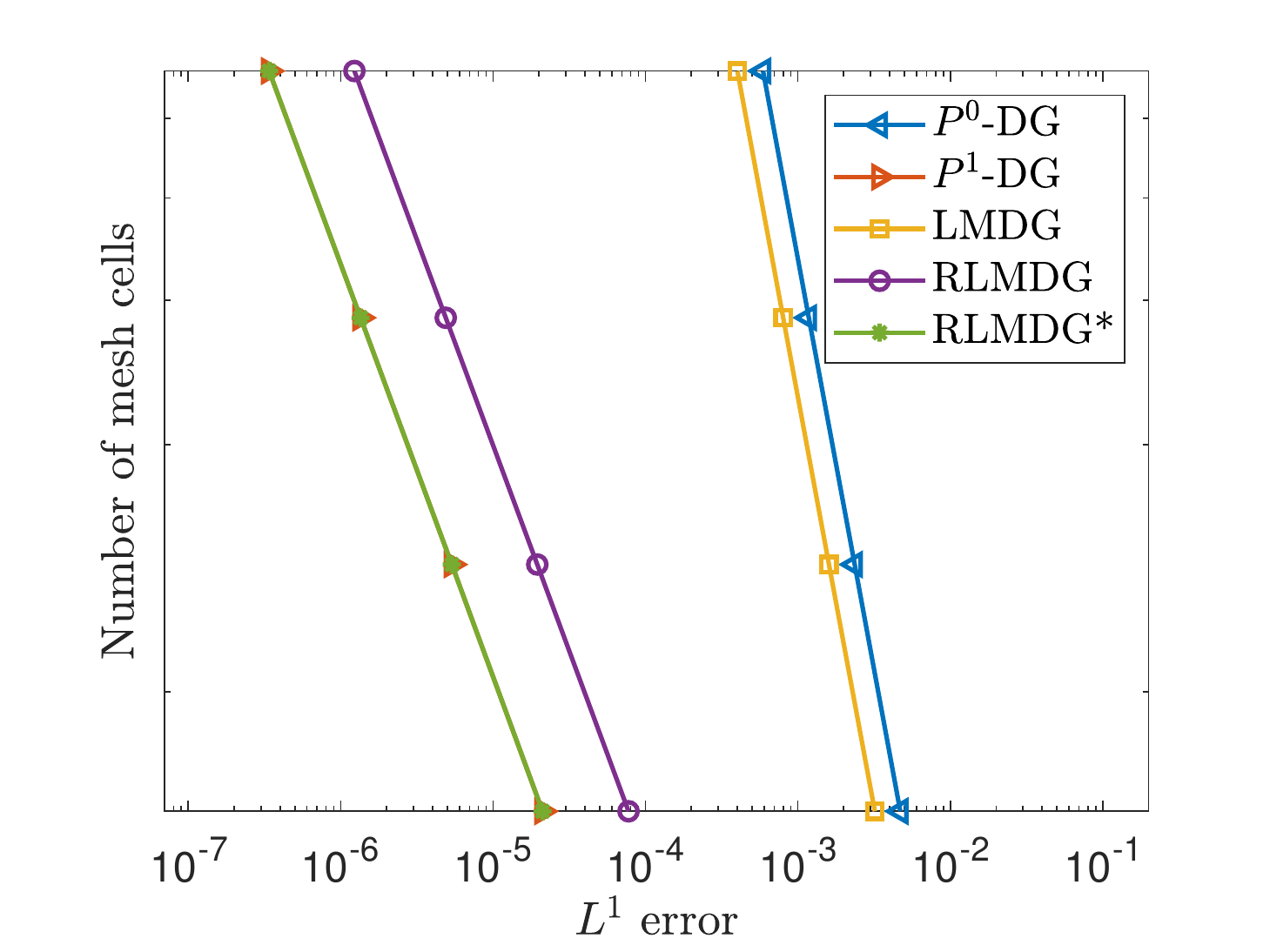}  
			\caption{Number of mesh cells.}
		\end{subfigure}
		~
		\begin{subfigure}[b]{0.3\textwidth}
			\includegraphics[width=\textwidth]{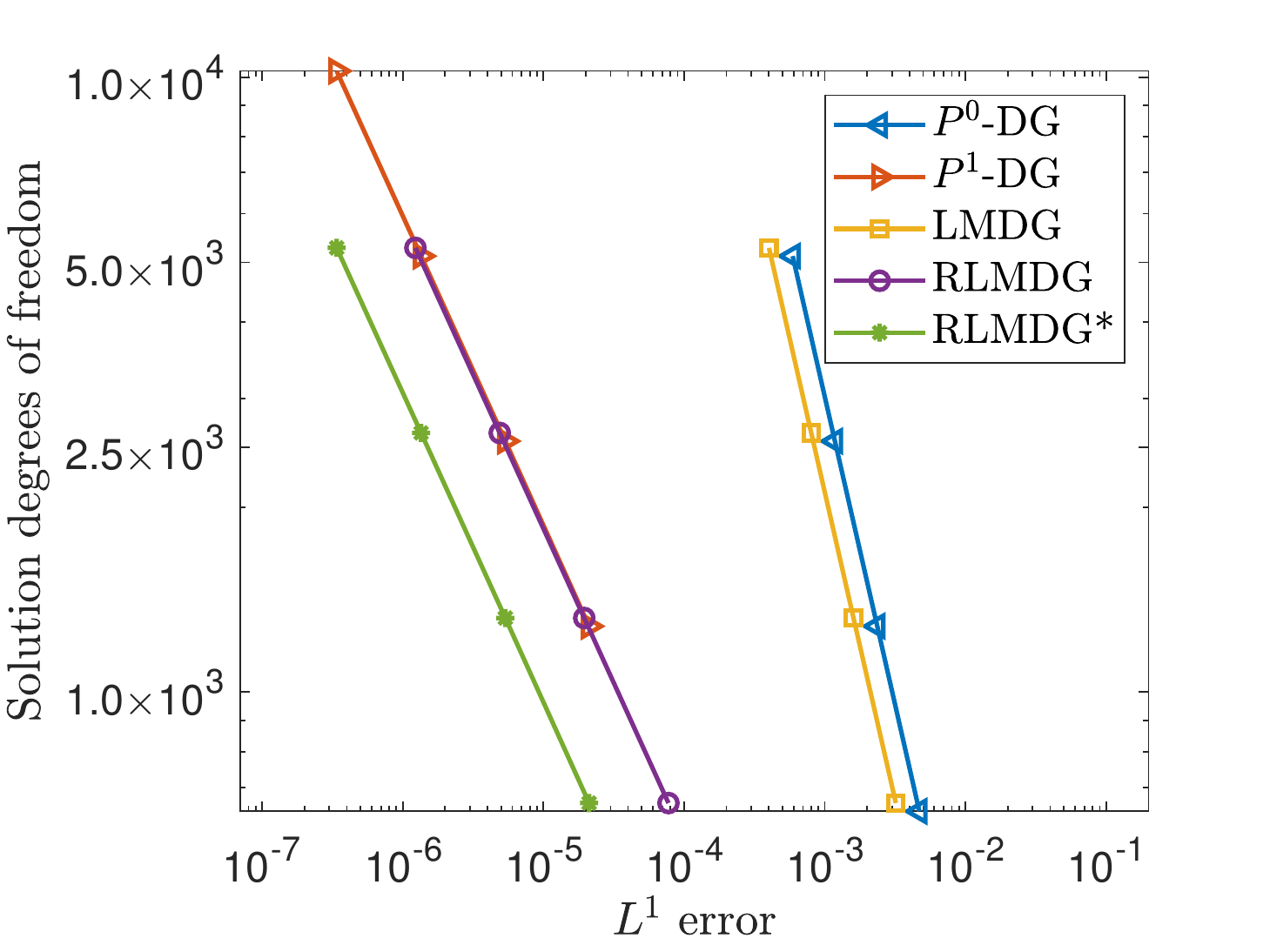}  
			\caption{Degrees of freedom.}
		\end{subfigure}
		~
		\begin{subfigure}[b]{0.3\textwidth}
			\includegraphics[width=\textwidth]{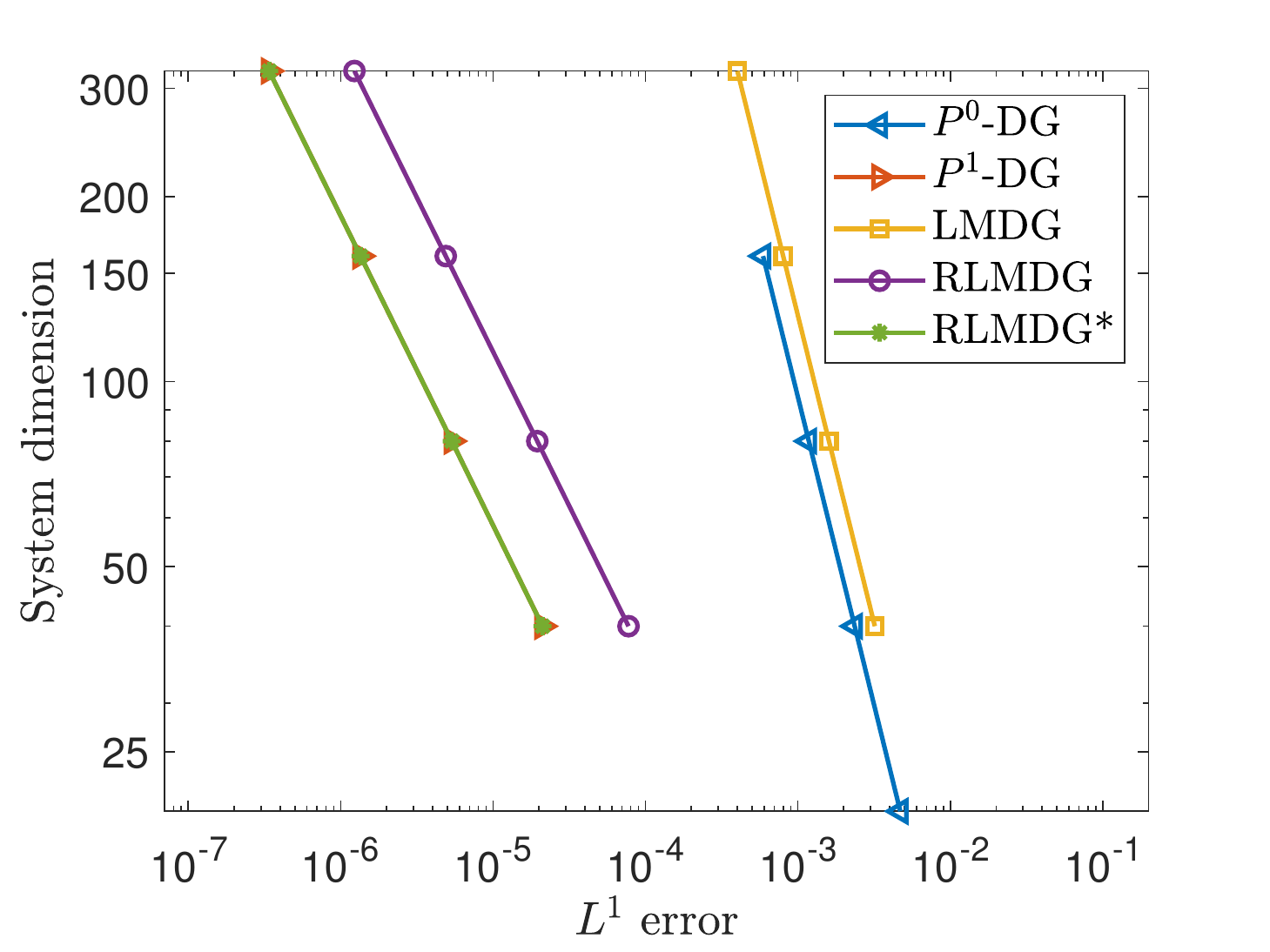}  
			\caption{System dimension.}
		\end{subfigure}
		\caption{Numerical efficiency in \Cref{examp-2daccu}. The first row is for isotropic test $\psi = \cos(x)$ and the second row is for anisotropic test $\psi = \cos(x+\mu)$. }\label{fig-efficiency}
	\end{figure}
	
	To better understand numerical efficiency, we analyze results in \Cref{tab-1d-aniso} by plotting $L^1$ error  versus number of mesh cells, total degrees of freedom of the solution (memory costs), and number of equations in the reduced linear system (either \eqref{eq-sndg-phi}, \eqref{eq-lmdg-reducemat}, or \eqref{eq-rlmdg-reducemat}).
	
	For the LMDG method, when the solution is isotropic, the method uses similar number of mesh cells as the $P^1$-DG method to reach the same accuracy. As a result, a reduced linear system of similar size is solved, but the degrees of freedom is smaller. For the anisotropic case, the LMDG method is first-order accurate. Compared with the $P^0$-DG method, it is able to reach similar accuracy on a coarser mesh. The reduced linear system is larger, but the number of degrees of freedom is indeed smaller. 
	
	For the RLMDG method, it seems to be less accurate compared with $P^1$-DG method, and a finer mesh has to be used to achieve the same accuracy. As a result, the solution degrees of freedom is similar to that of the $P^1$-DG method but the reduced system is even larger. However, we point out a more accurate reconstruction may solve this problem. For example, instead of using two cells, one can recover slopes in the interior region with a three-cell upwind reconstruction (which we call RLMDG$^*$). This new method is still second-order accurate, but its error is comparable to the $P^1$-DG method and significantly smaller than the current RLMDG method.  Efficiency results for RLMDG$^*$ are depicted by green lines in \Cref{fig-efficiency} (they overlap with red lines in (d) and (f)). These results show that RLMDG$^*$ yields reduced systems of similar size to those of the $P^1$-DG method, but it uses less overall memory.
\end{example}
\begin{example}\label{examp-1ddifflim}
	In the second numerical test, we examine the convergence rate and asymptotic preserving property of the methods. Let $\sig{s} = \sig{a} = 1$ in \eqref{eq-1d}. The computational domain
	is set as $D =[0,{\pi}]$. We take $\psi_l =
	\psi_r
	= 0$ and $q = \frac{4}{3}\sin(x)$. The $32$-point Gauss quadrature 
	is used for $S_N$ discretization.
	
	Numerical error at $\veps = 10^{-5}$ and $\veps = 1$ is listed in \Cref{tab-1d-eps10-5}, respectively. 
	The reference solutions are set as the numerical solutions 
	with $P^1$-DG scheme on a mesh with $1280$ cells.
	One can see from \Cref{tab-1d-eps10-5}, the LMDG 
	scheme exhibits second-order convergence rate at $\veps = 10^{-5}$, when the solution is
	almost isotropic, while it converges at a first-order rate when $\veps = 1$. 
	The RLMDG method is second-order in both cases.
	
	Solution profiles of different schemes on a sparse
	uniform mesh, with $h = \pi/8$, are shown in \Cref{fig-1dguermond-1}. When $\veps = 10^{-5}$, both LMDG and RLMDG methods preserve the correct diffusion limit, unlike the $P^0$-DG method. When $\veps = 1$, all schemes give valid approximations.
	\begin{table}[h!]
		\footnotesize
		\centering
		\begin{tabular}{c|c|c|c|c|c|c|c|c|c}
			\hline
			&&\multicolumn{2}{c|}{$P^0$-DG}&\multicolumn{2}{c|}{$P^1$-DG}&\multicolumn{2}{c|}{LMDG}
			&\multicolumn{2}{c}{RLMDG}\\
			\hline
			$\veps$&$h$&error&order&error&order&error&order&error&order\\
			\hline
			\multirow{2}{*}{$10^{-5}$} 
			&$1/20$&  2.00e-0& -  &1.89e-3&-&	1.89e-3&-&7.52e-3&-\\
			&$1/40$&  2.00e-0&0.00&4.70e-4&2.01&4.71e-4&2.00&1.88e-3&2.00\\
			&$1/80$&  2.00e-0&0.00&1.17e-4&2.01&1.16e-4&2.03&4.70e-4&2.00\\
			&$1/160$& 1.99e-0&0.00&2.91e-5&2.00&3.06e-5&1.92&1.17e-4&2.00\\
			\hline
			\multirow{2}{*}{$1$} 
			&$1/20$&  1.06e-1&-&	2.91e-3&-	&3.08e-2&-&		9.55e-3& -\\
			&$1/40$&  5.38e-2&0.98& 7.72e-4&1.92&1.59e-2&0.95&2.60e-3&1.88\\
			&$1/80$&  2.71e-2&0.99& 1.99e-4&1.95&8.09e-3&0.98&6.90e-4&1.91\\
			&$1/160$& 1.35e-2&1.00& 5.03e-5&1.99&4.08e-3&0.99&1.80e-4&1.94\\
			
			\hline
		\end{tabular}
		\caption{Accuracy test for \Cref{examp-1ddifflim}.}\label{tab-1d-eps10-5}
	\end{table}
	\begin{figure}[h!]
		\centering
		\begin{subfigure}[b]{0.45\textwidth}
			\includegraphics[width=\textwidth]{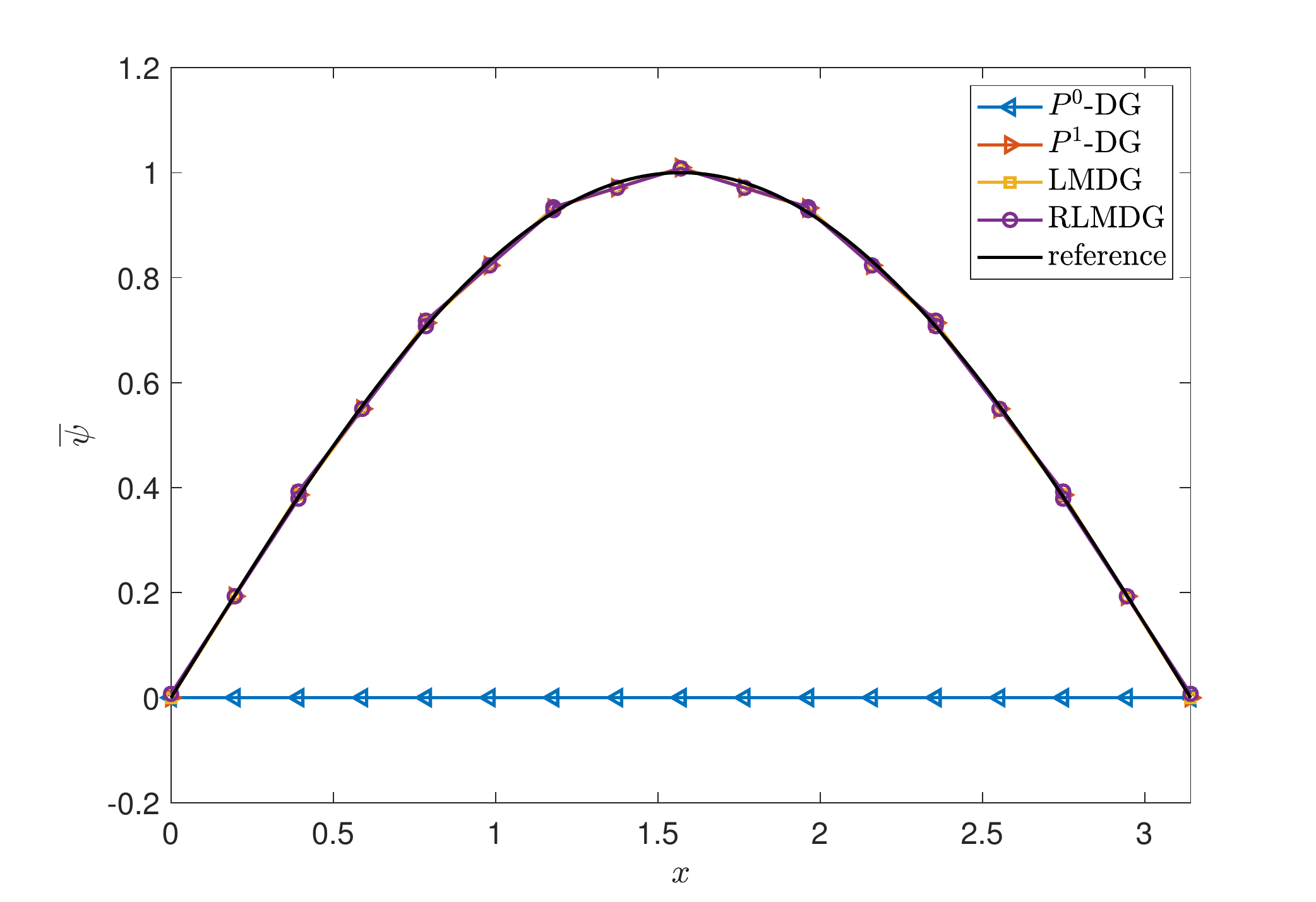}
			\caption{$\veps = 10^{-5}$.}
		\end{subfigure}
		~
		\begin{subfigure}[b]{0.45\textwidth}
			\includegraphics[width=\textwidth]{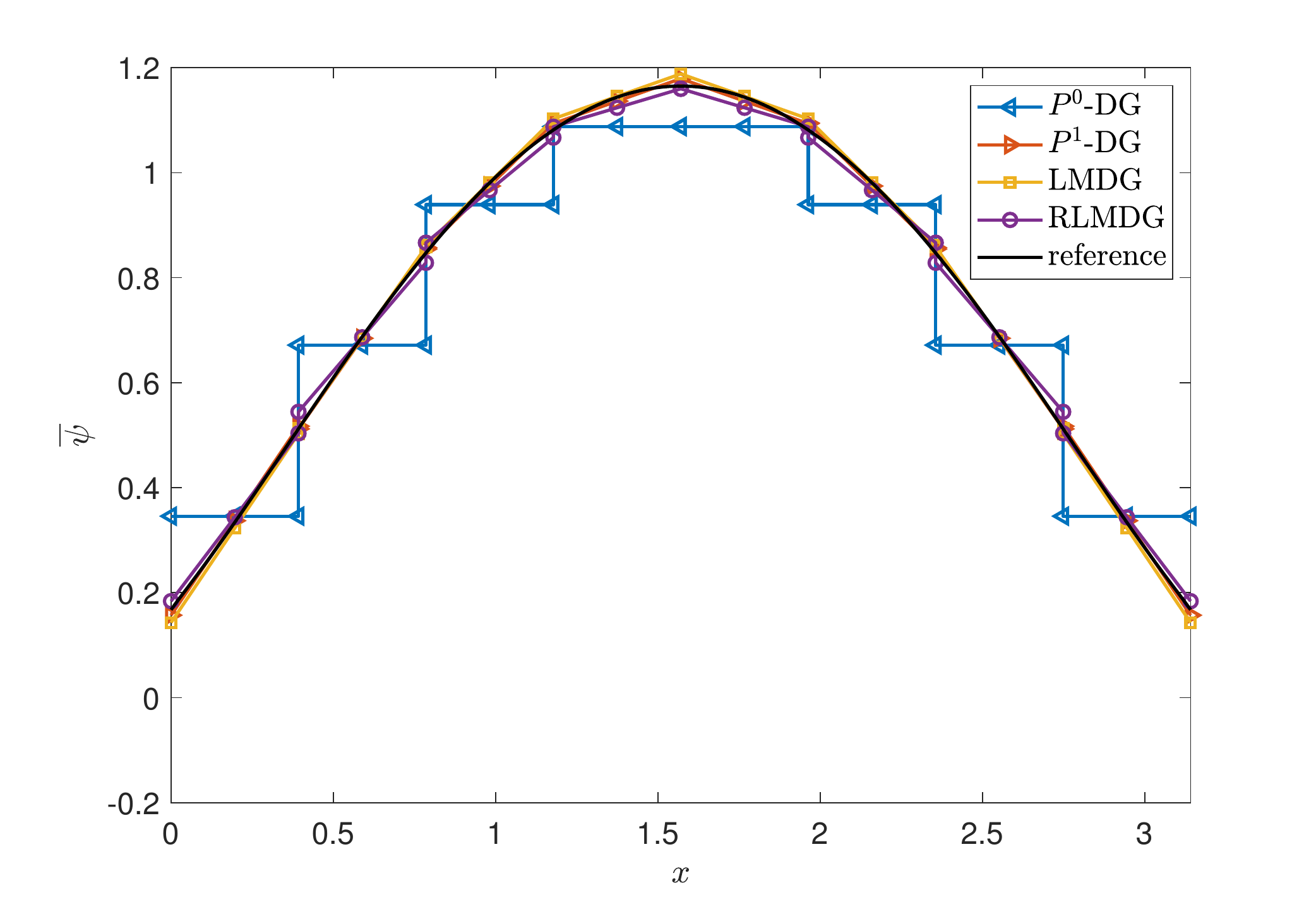}  
			\caption{$\veps = 1$.}
		\end{subfigure}
		\caption{Profiles of numerical scalar fluxes in \Cref{examp-1ddifflim}.}
	\end{figure}
\end{example}

\begin{example}\label{examp-guermond}
	We then consider a test
	from \cite{ragusa2012robust} with discontinuous cross-sections. 
	The problem is defined on $[0,1]$ and is purely scattering, i.e., 
	$\sig{a}\equiv 0$. The cross-section is $\sig{s} = \sig{s,1}=100$ 
	on the left part of the domain $[0,0.5]$, and is $\sig{s} = \sig{s,2} =
	100, 1000 \text{ or } 10000$ on the right part $[0.5,1]$. 
	The source term is constant $q = 0.01$. In the numerical test, 
	we set the mesh size to be $h =0.1$ and $h = 0.02$, and solutions are
	depicted in \Cref{fig-1dguermond-1} and \Cref{fig-1dguermond-2}, respectively. 
	As one can see, unlike the $P^0$-DG scheme, 
	both LMDG and RLMDG schemes provide correct
	solution profiles. Since the problem is diffusive, the LMDG scheme
	gives accurate approximations that are almost indistinguishable with the
	$P^1$-DG solutions. The reconstructed scheme has difficulty resolving the
	kink at $x = 0.5$, likely because the reconstruction is no longer accurate at this point.  This artifact can indeed be alleviated as the mesh is refined comparing \Cref{fig-1dguermond-1} and \Cref{fig-1dguermond-2}.
	\begin{figure}[h!]
		\begin{subfigure}[b]{0.3\textwidth}
			\includegraphics[width=\textwidth]{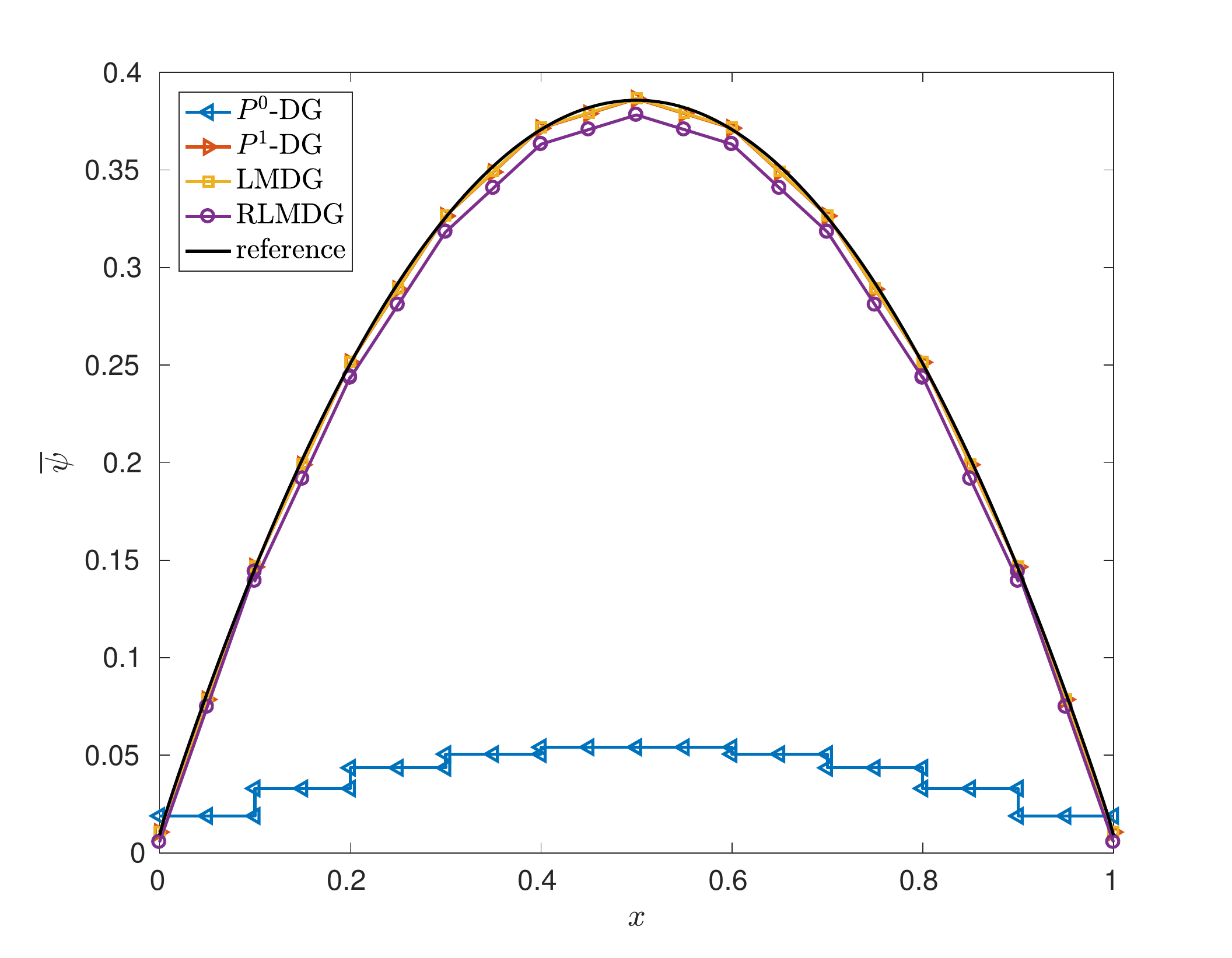}  
			\caption{$\sig{s,2} = 100$.}
		\end{subfigure}
		~
		\begin{subfigure}[b]{0.3\textwidth}
			\includegraphics[width=\textwidth]{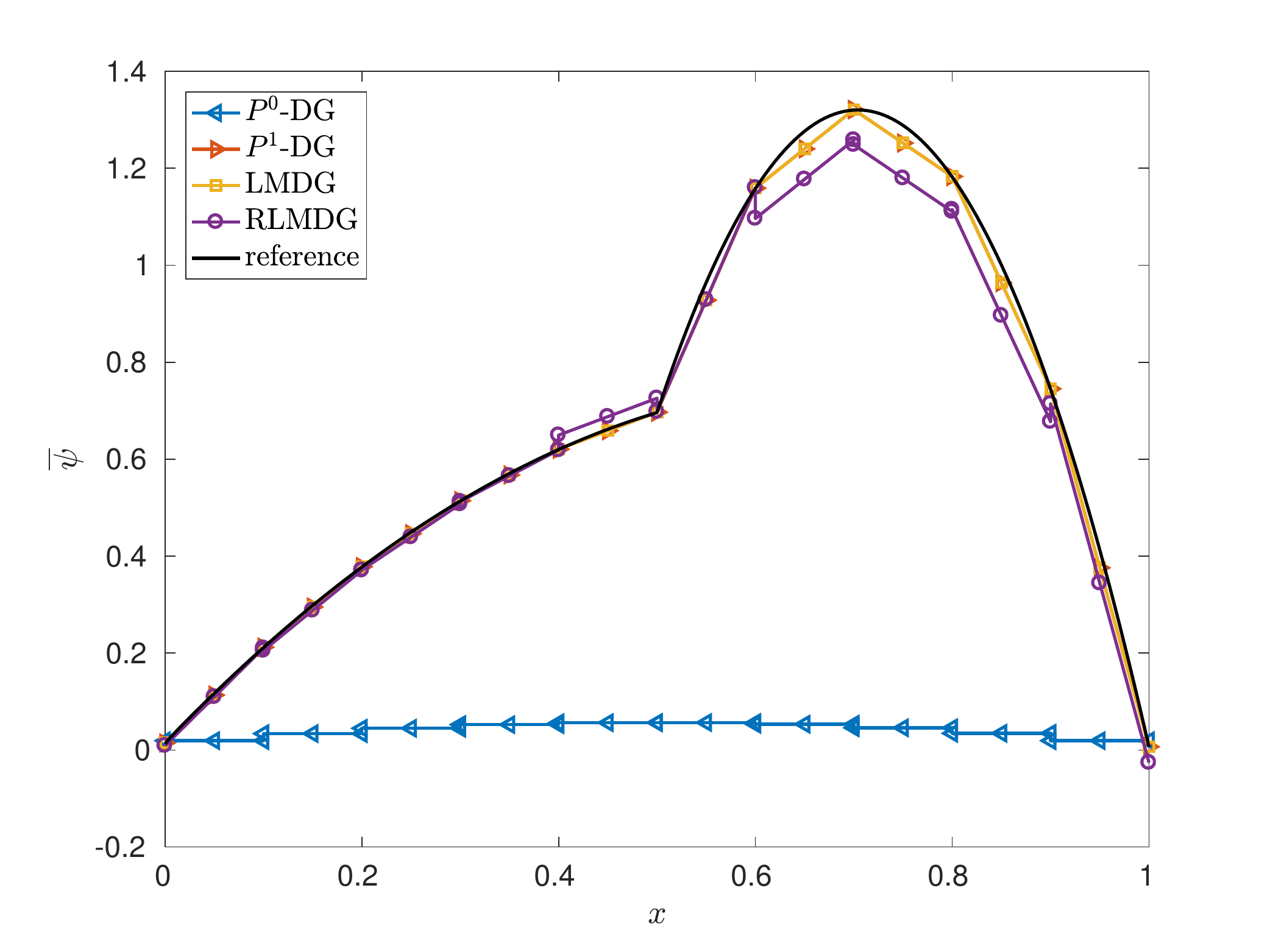}  
			\caption{$\sig{s,2} = 1000$.}
		\end{subfigure}
		~
		\begin{subfigure}[b]{0.3\textwidth}
			\includegraphics[width=\textwidth]{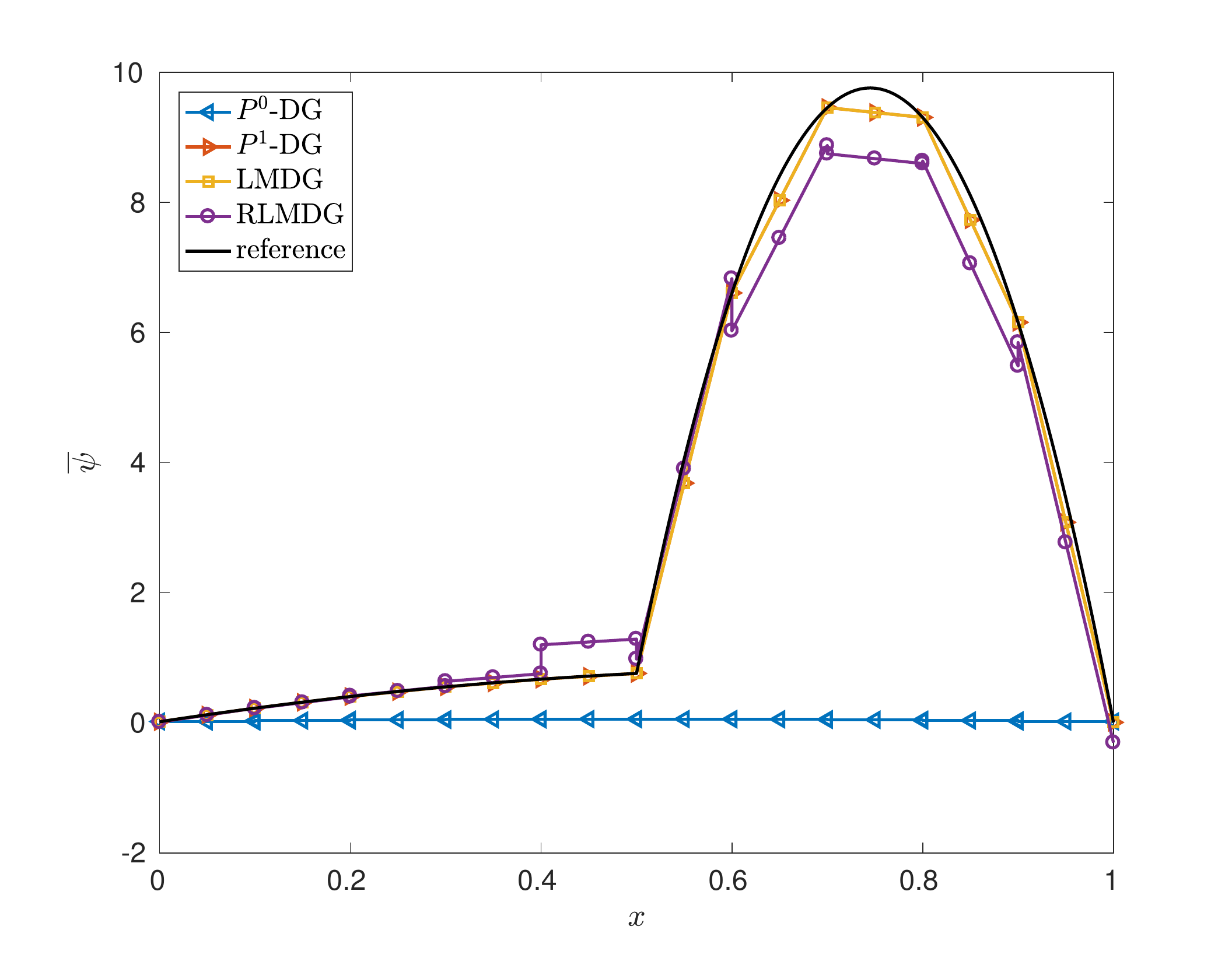}  
			\caption{$\sig{s,2} = 10000$.}
		\end{subfigure}
		\caption{Profiles of numerical scalar fluxes in 
			\Cref{examp-guermond}, $h = 0.1$.}\label{fig-1dguermond-1}
	\end{figure}
	\begin{figure}[h!]
		\begin{subfigure}[b]{0.3\textwidth}
			\includegraphics[width=\textwidth]{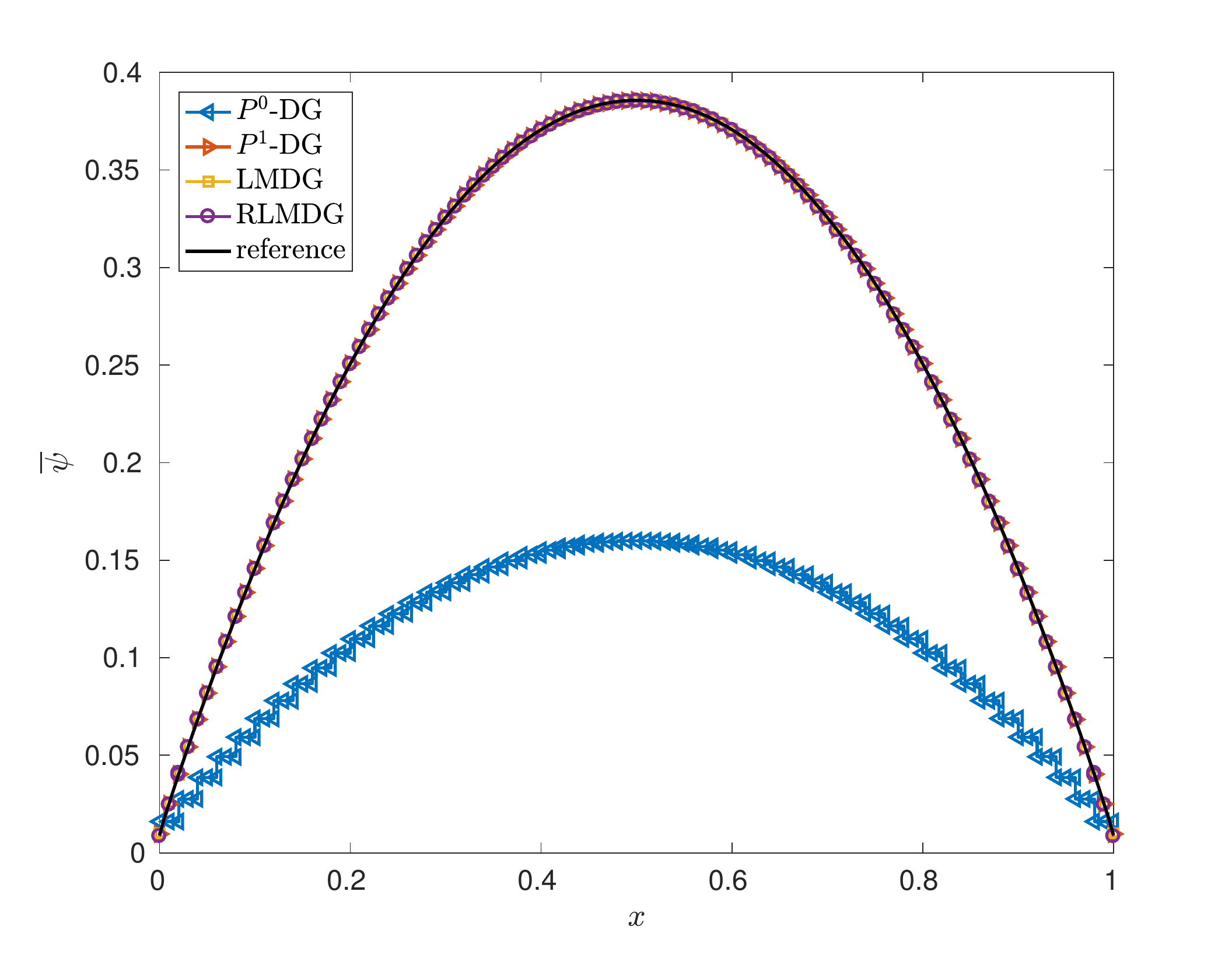}  
			\caption{$\sig{s,2} = 100$.}
		\end{subfigure}
		~
		\begin{subfigure}[b]{0.3\textwidth}
			\includegraphics[width=\textwidth]{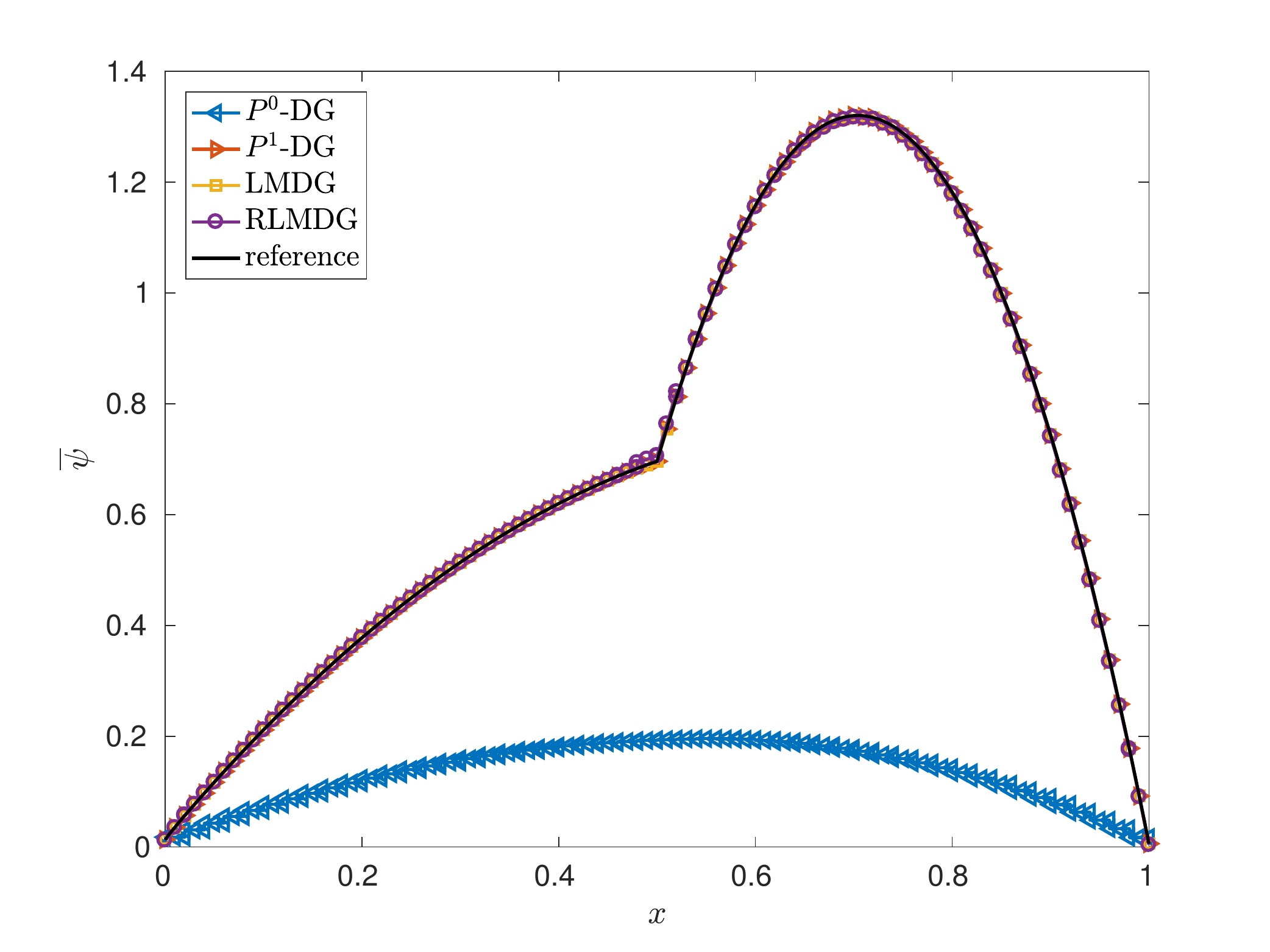}  
			\caption{$\sig{s,2} = 1000$.}
		\end{subfigure}
		~
		\begin{subfigure}[b]{0.3\textwidth}
			\includegraphics[width=\textwidth]{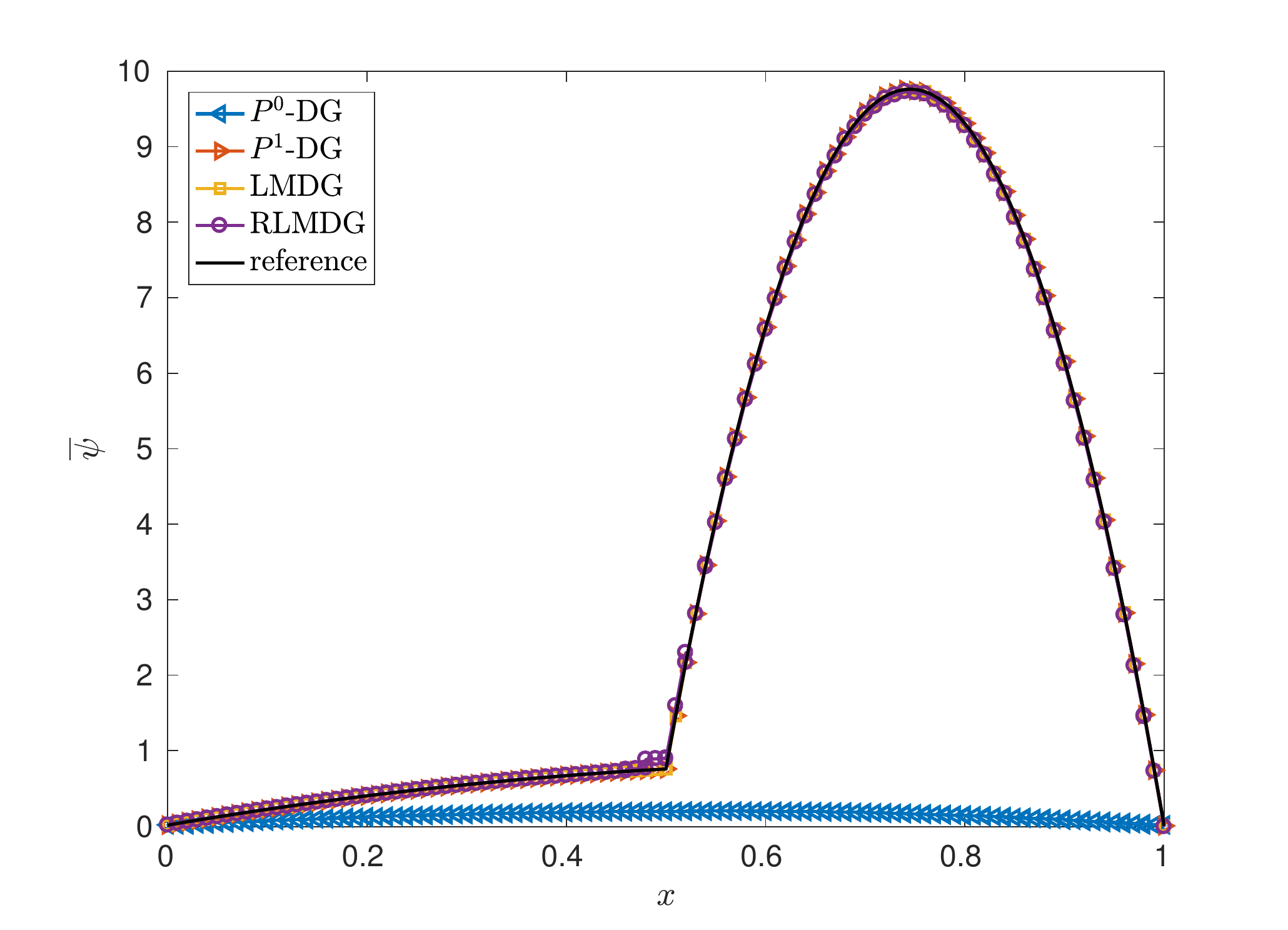}  
			\caption{$\sig{s,2} = 10000$.}
		\end{subfigure}
		\caption{Profiles of numerical scalar fluxes in 
			\Cref{examp-guermond}, $h = 0.02$.}\label{fig-1dguermond-2}
	\end{figure}
\end{example}
\begin{example}\label{examp-disccross1}
	In this numerical test, we solve a test problem from
	\cite{larsen1989asymptotic} with discontinuous
	cross-sections. We take $q = 0$ with the left inflow $\psi_l = 1$ 
	at $x_a = 0$ and $\psi_r = 0$ at $x_b = 11$. Let 
	$\frac{\sig{s}}{\veps} = \left\{\begin{matrix} 0,&0<x<1 \\100,&
	1<x<11\end{matrix}\right.$ and 
	$\veps{\sig{a}} = \left\{\begin{matrix} 2,&0<x<1 \\0,&
	1<x<11\end{matrix}\right.$.
	The $16$-point Gauss quadrature rule is used for angular discretization.
	The spatial mesh is set as
	$h = \left\{\begin{matrix}0.1,&0<x<1\\1,&1<x<11\\\end{matrix}\right.$.
	
	Profiles of the scalar flux obtained with various schemes are depicted in 
	\Cref{fig-disccross1-1}.  The reference solutions are obtained with the $P^1$-DG
	scheme on a refined mesh. The solution of the LMDG scheme is satisfactory. As before, the RLMDG scheme gives an
	accurate approximation to the scalar flux, except for kinks near the
	discontinuity.  However, this numerical artifact can also be alleviated by suppressing the reconstruction across the  discontinuity; see \Cref{fig-disccross1-2}. 
	\begin{figure}[h!]
		\centering
		\begin{subfigure}[h]{0.45\textwidth}
			\includegraphics[width=\textwidth]{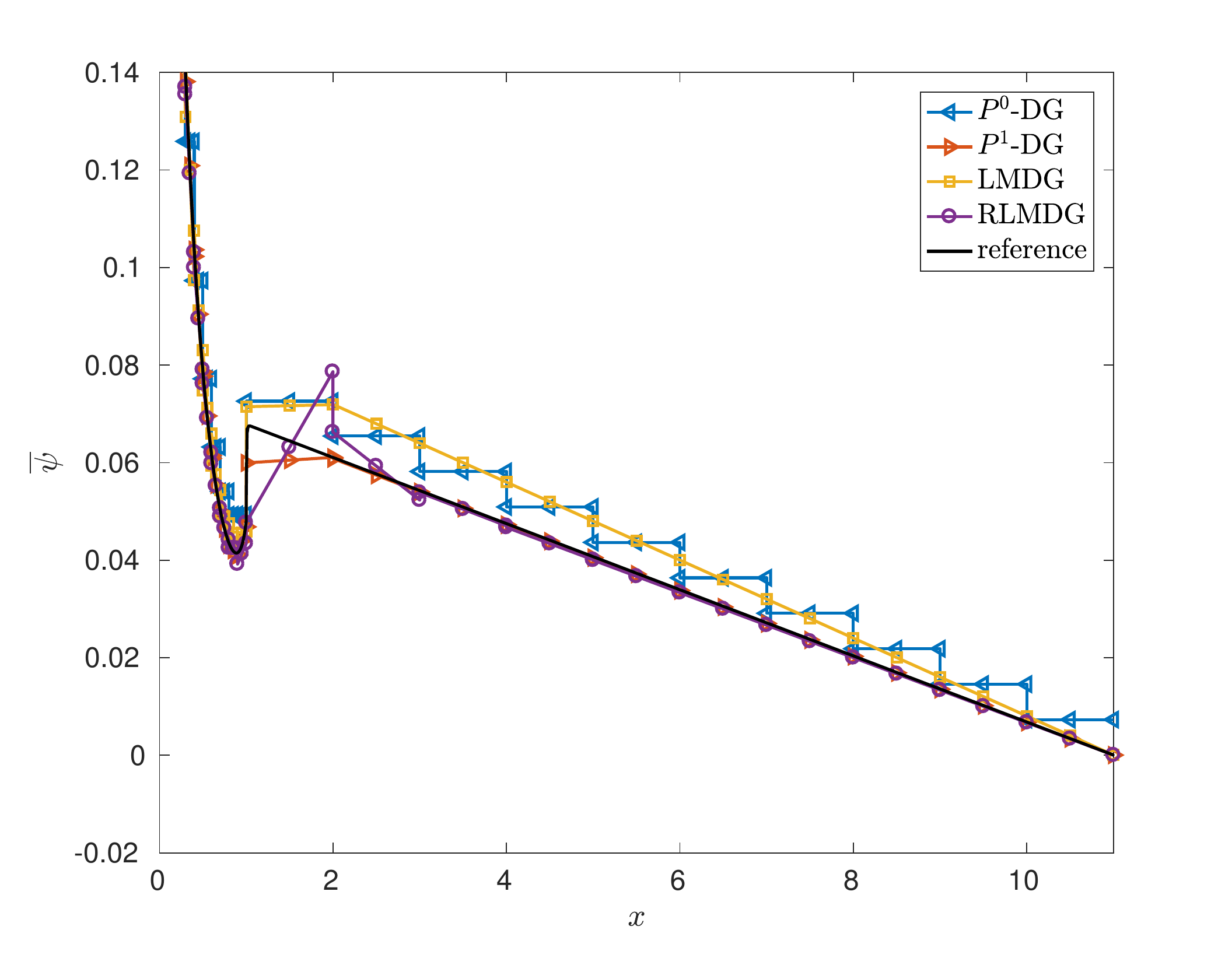}
			\caption{Numerical scalar fluxes.}\label{fig-disccross1-1}
		\end{subfigure}
		\begin{subfigure}[h]{0.45\textwidth}
			\includegraphics[width=\textwidth]{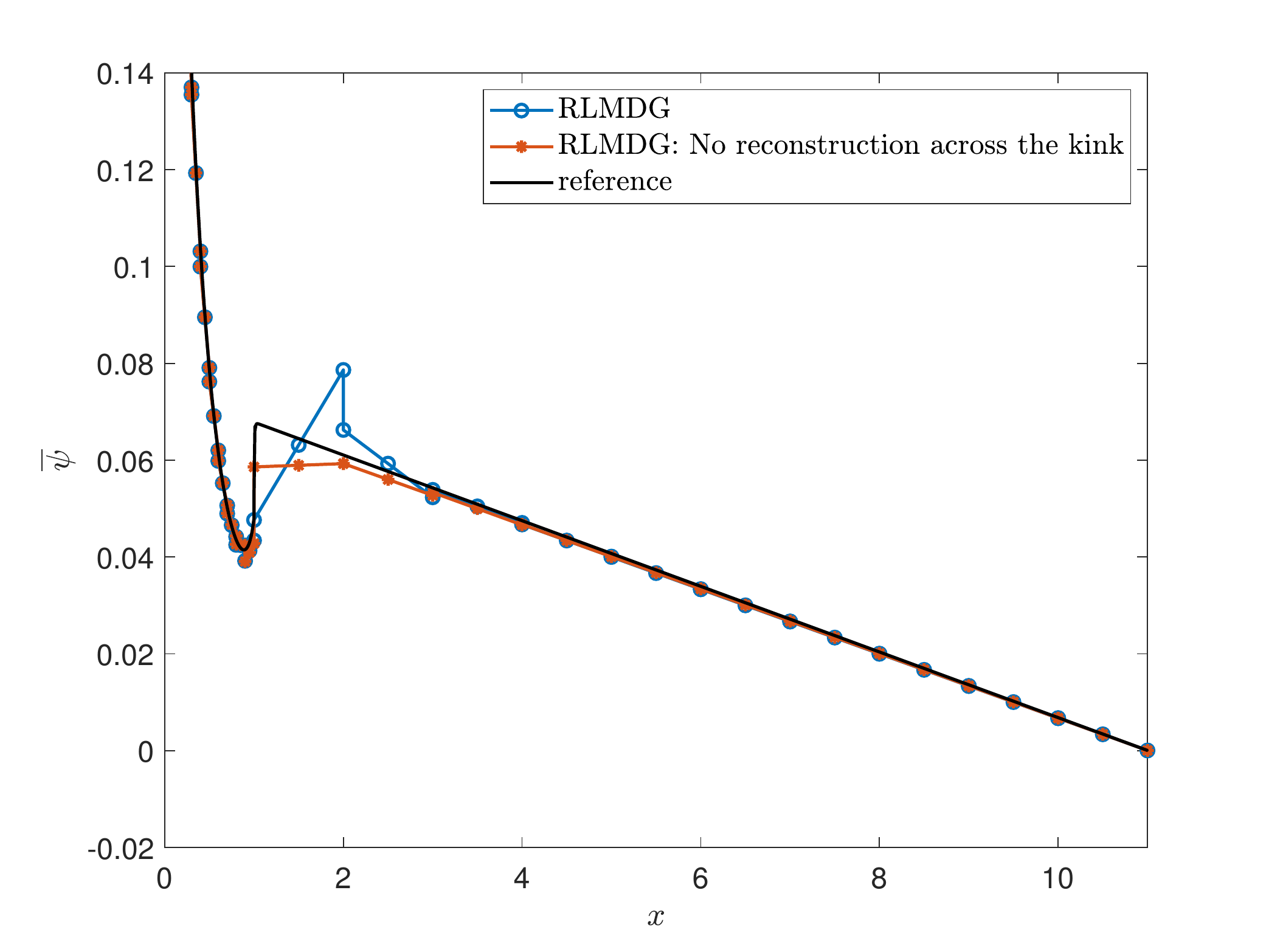}
			\caption{With suppressed reconstruction.}\label{fig-disccross1-2}
		\end{subfigure}
		\caption{Profiles of numerical scalar fluxes in 
			\Cref{examp-disccross1}.}
	\end{figure}
\end{example}
\begin{example}\label{examp-disccross2}
	This test is also from \cite{larsen1989asymptotic}, with $D = [0,20]$ and $\psi_l = \psi_r =0$. The cross-sections are
	$\frac{\sig{s}}{\veps} = \left\{\begin{matrix} 90,&0<x<10 \\100,&
	10<x<20\end{matrix}\right.$
	and 
	$\veps{\sig{a}} = \left\{\begin{matrix} 10,&0<x<10 \\0,&
	10<x<20\end{matrix}\right.$.
	We solve the problem using the ${16}$-point Gauss quadrature rule and 
	the spatial mesh is uniform with $h=1$.
	For this numerical test, the system has smaller changes among different directions. 
	Both the LMDG and RLMDG schemes give accurate approximations.
	Solution profiles are give in \Cref{fig-disccross2}.
	\begin{figure}[h!]
		\centering
		\includegraphics[width=0.45\textwidth]{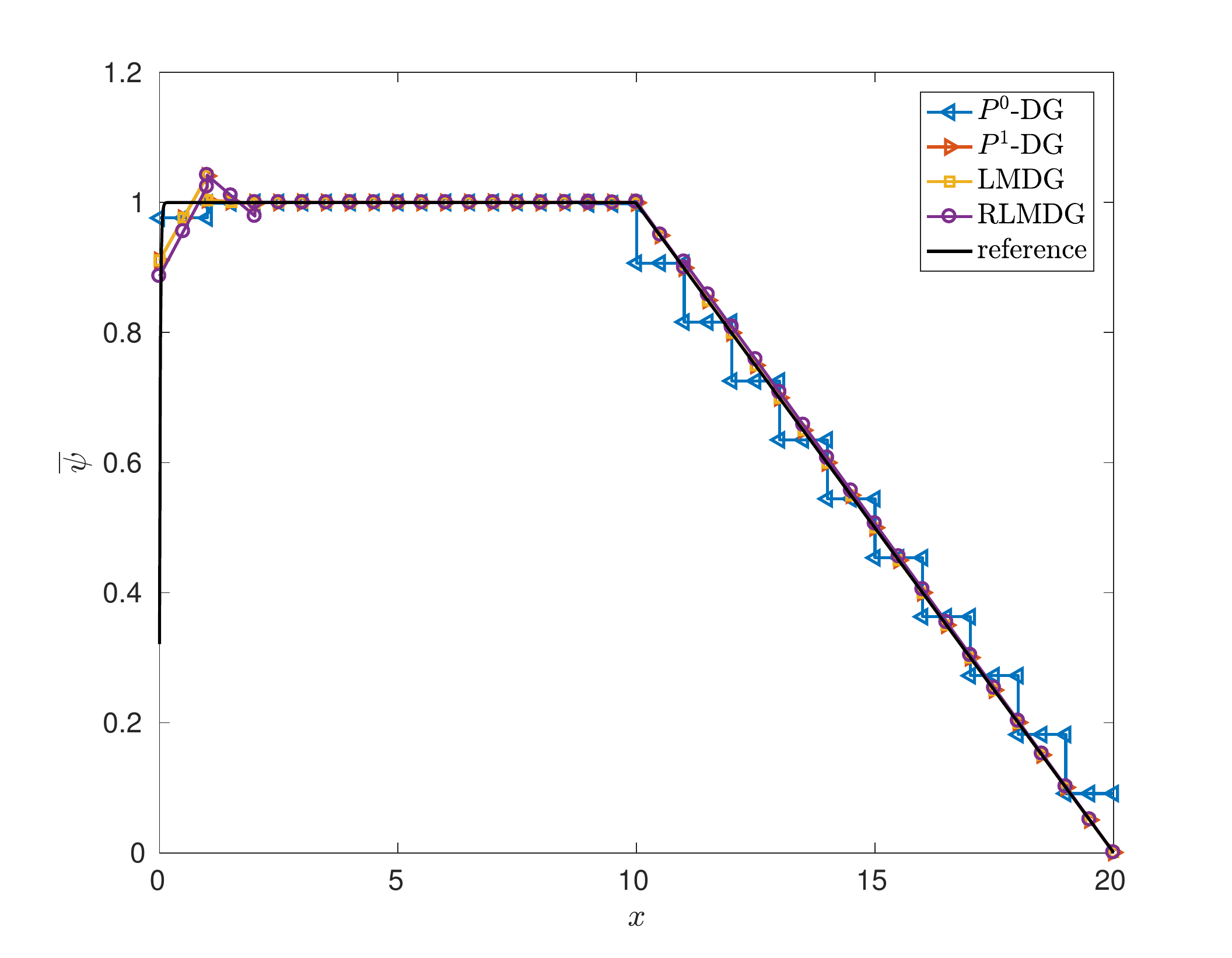}
		\caption{Profiles of numerical scalar fluxes in 
			\Cref{examp-disccross2}.}\label{fig-disccross2}
	\end{figure}
\end{example}
\subsection{Two dimensional tests}
We consider two dimensional problems on Cartesian meshes in this section.
\begin{example}\label{examp-2daccu}
	We set $\veps = 1$ and $\sig{s} = \sig{a} =1$ and test the accuracy with exact solutions $\psi = \sin(x+y)$ and
	$\psi = (\Omega_x-3\Omega_y)^2\sin(2x+y)$.
	As can be seen from \Cref{tab-2d-aniso}, 
	for $\psi  = \sin(x+y)$, both LMDG and RLMDG
	schemes are second-order accurate. While for the anisotropic problem with $\psi = (\Omega_x-3\Omega_y)^2\sin(2x+y)$, 
	the RLMDG scheme is still second-order accurate and the LMDG
	scheme is first-order accurate.
	\begin{table}[h!]
		\centering
		\footnotesize
		\hskip-1.0cm
		\begin{tabular}{c|c|c|c|c|c|c|c|c|c|c}
			\hline
			\multicolumn{11}{c}{$\psi = \sin(x+y)$}\\
			\hline
			&\multicolumn{2}{c|}{$P^0$-DG}&\multicolumn{2}{c|}{$P^1$-DG}&\multicolumn{2}{c|}{$Q^1$-DG}&\multicolumn{2}{c|}{LMDG}&\multicolumn{2}{c}{RLMDG}\\
			\hline
			$h/\sqrt{2}$&error&order&error&order&error&order&error&order&error&order\\
			\hline
			$1/20$& 2.04e-2&-   &1.45e-4&-   &1.40e-4&-   &1.24e-4&-   &4.59e-4&-\\
			$1/40$& 1.10e-2&0.89&3.42e-5&2.08&3.53e-5&1.98&3.12e-5&1.99&1.18e-4&1.96\\
			$1/80$& 5.77e-3&0.94&8.28e-6&2.04&8.88e-6&1.99&7.82e-6&2.00&2.98e-5&1.98\\
			$1/160$&2.96e-3&0.96&2.04e-6&2.02&2.26e-6&2.00&1.96e-6&2.00&7.51e-6&1.99\\
			\hline
			\multicolumn{11}{c}{$\psi = (\Omega_x - 2\Omega_y)^2 \sin(2x + y)$}\\
			\hline
			&\multicolumn{2}{c|}{$P^0$-DG}&\multicolumn{2}{c|}{$P^1$-DG}&\multicolumn{2}{c|}{$Q^1$-DG}
			&\multicolumn{2}{c|}{LMDG}&\multicolumn{2}{c}{RLMDG}\\
			\hline
			$h/\sqrt{2}$&error&order&error&order&error&order&error&order&error&order\\
			\hline
			$1/20$&  7.84e-2	&     & 1.64e-3&		 -& 1.39e-3&	-& 5.04e-2&		 -& 4.81e-3&   -\\   
			$1/40$&  4.18e-2	&0.91 & 4.12e-4&	2.00&	3.53e-4&	1.98&	2.57e-2&	0.97&	1.21e-3&1.99\\
			$1/80$&  2.12e-2	&0.98 & 1.01e-4&	2.03&	8.87e-5&	1.99&	1.30e-2&	0.99&	3.05e-4&1.99\\
			$1/160$& 1.07e-2	&0.97 & 2.52e-5&	2.00&	2.22e-5&	2.00&	6.51e-3&	0.99&	7.63e-5&2.00\\
			\hline
		\end{tabular}
		\caption{2D accuracy test with fabricated solutions.}\label{tab-2d-aniso}
	\end{table}
	
\end{example}
\begin{example}\label{examp-2ddifflim}
	To examine the asymptotic preserving property, we consider the problem defined on $[-1,1]\times[-1,1]$ with 
	zero inflow boundary conditions. Let $\sig{s} =
	\sig{a} = 1$. We assume $q =
	(\frac{\pi^2}{6}+1)\cos(\frac{\pi}{2}x)\cos(\frac{\pi}{2}y)$. The
	asymptotic solution is $\psi^{(0)}=
	\cos(\frac{\pi}{2}x)\cos(\frac{\pi}{2}y)$. We
	test with $\veps= 1,2^{-6},2^{-10},2^{-14}$; the numerical results are given in \Cref{fig-2d}. For the $P^0$-DG and
	$P^1$-DG schemes, solutions become zero near the
	diffusion limit, while for the $Q^1$-DG scheme, LMDG scheme and RLMDG scheme, the correct asymptotic profile is maintained.
	\begin{figure}[h!]
		\centering
		\begin{subfigure}[b]{0.23\textwidth}
			\includegraphics[width=\textwidth]{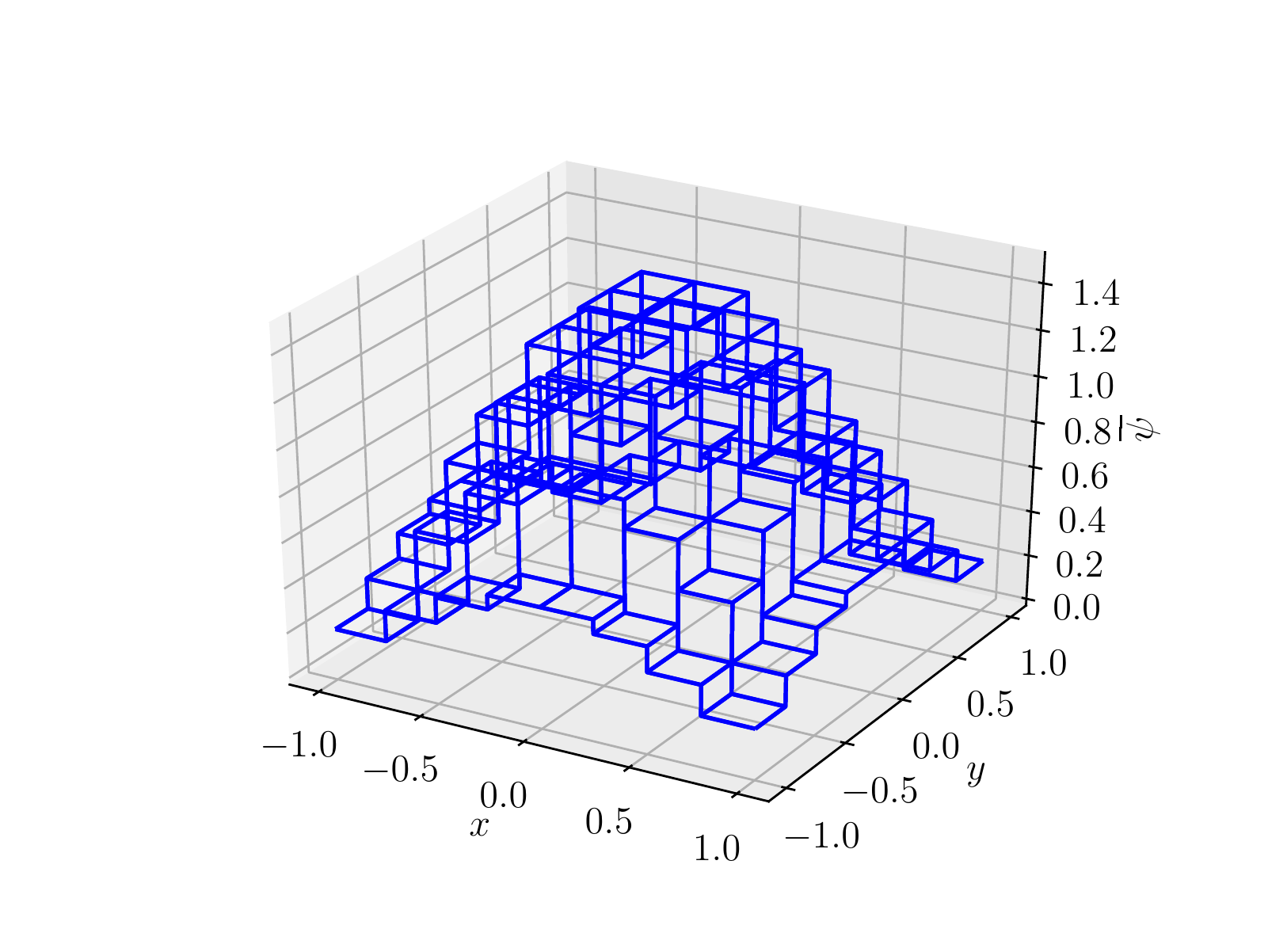}  
			\caption{$P^0$, $\veps = 1$.}
		\end{subfigure}
		~
		\begin{subfigure}[b]{0.23\textwidth}
			\includegraphics[width=\textwidth]{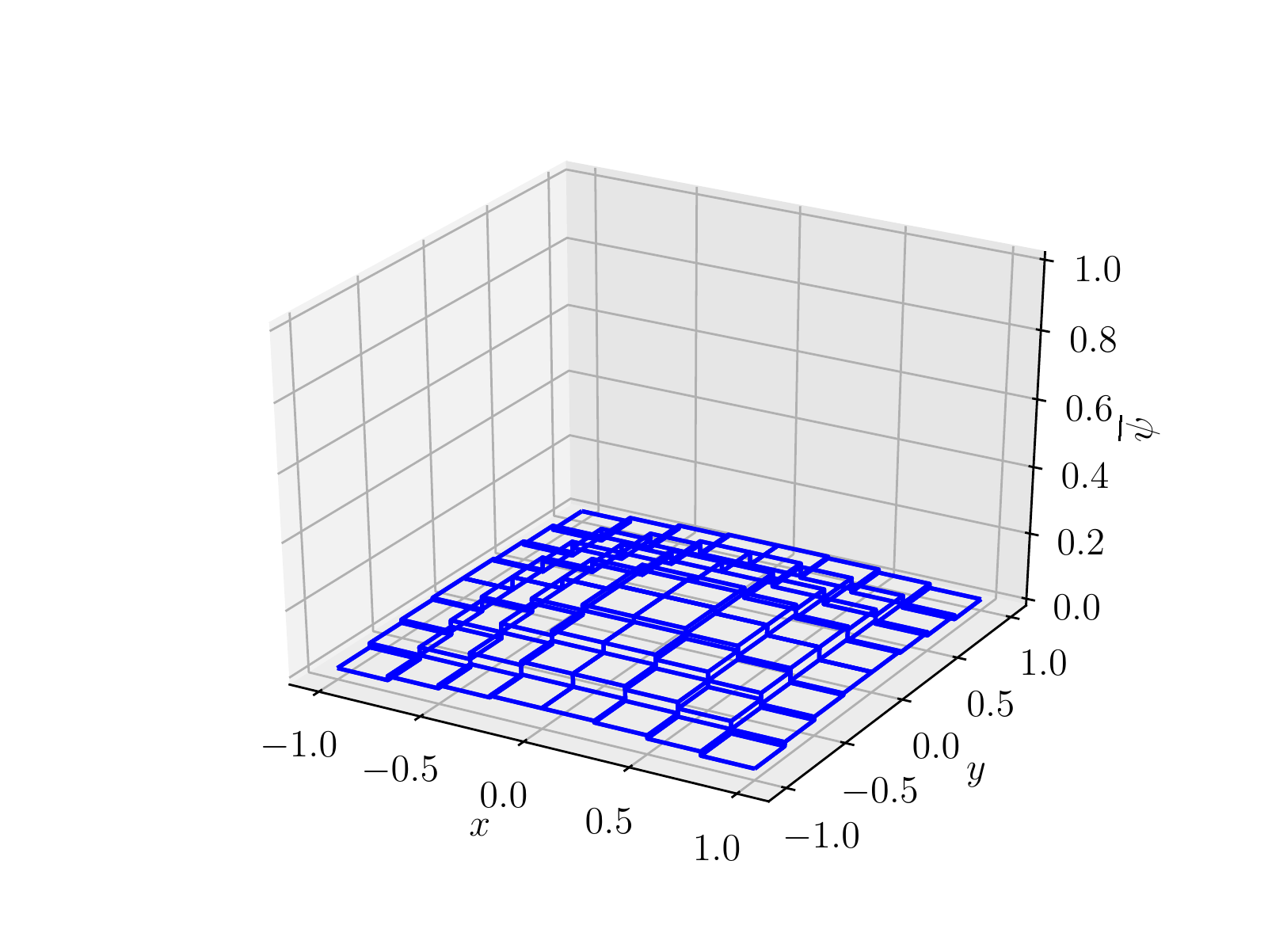}  
			\caption{$P^0$, $\veps = 2^{-6}$.}
		\end{subfigure}
		~
		\begin{subfigure}[b]{0.23\textwidth}
			\includegraphics[width=\textwidth]{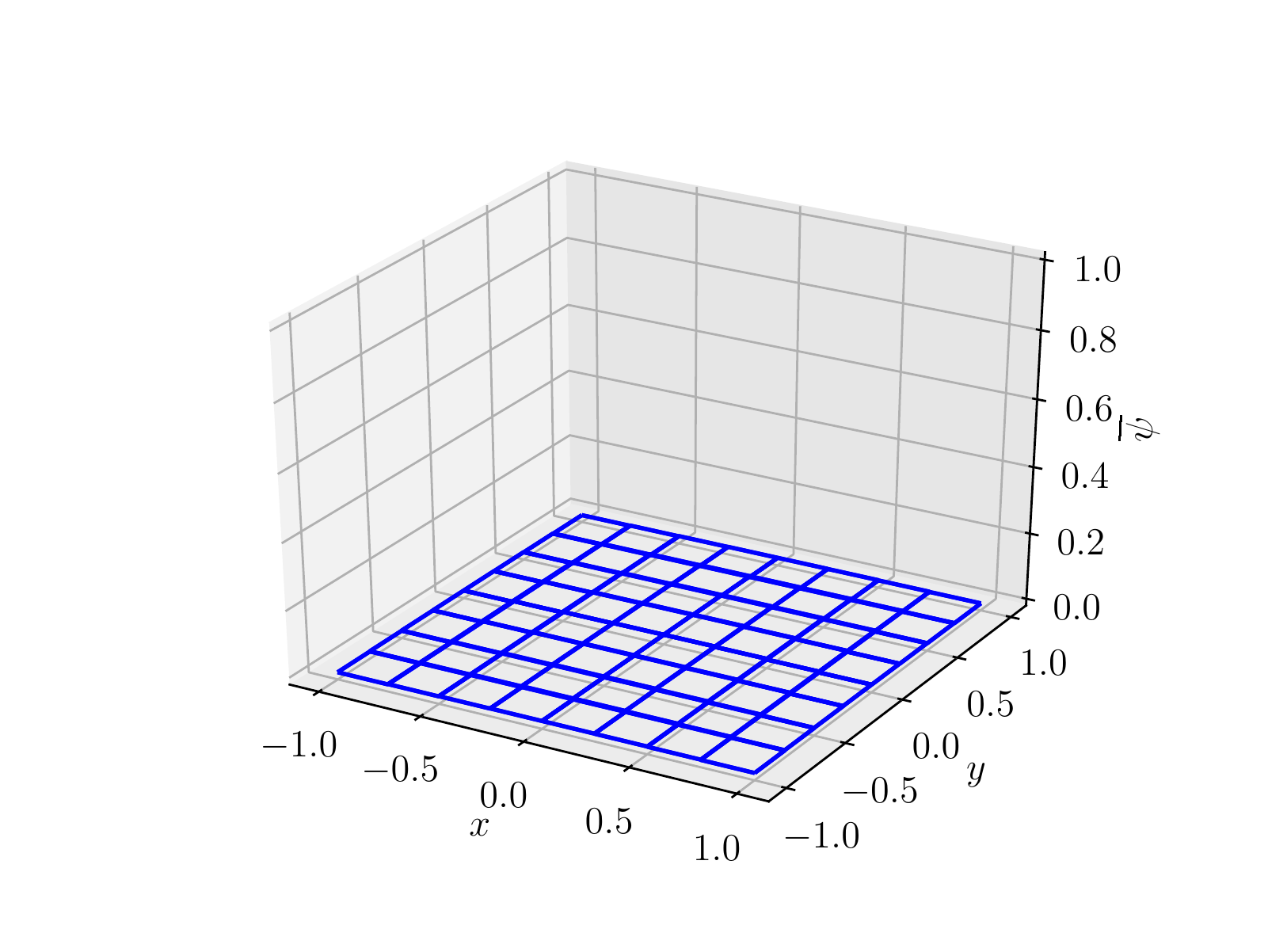}  
			\caption{$P^0$, $\veps = 2^{-10}$.}
		\end{subfigure}
		~
		\begin{subfigure}[b]{0.23\textwidth}
			\includegraphics[width=\textwidth]{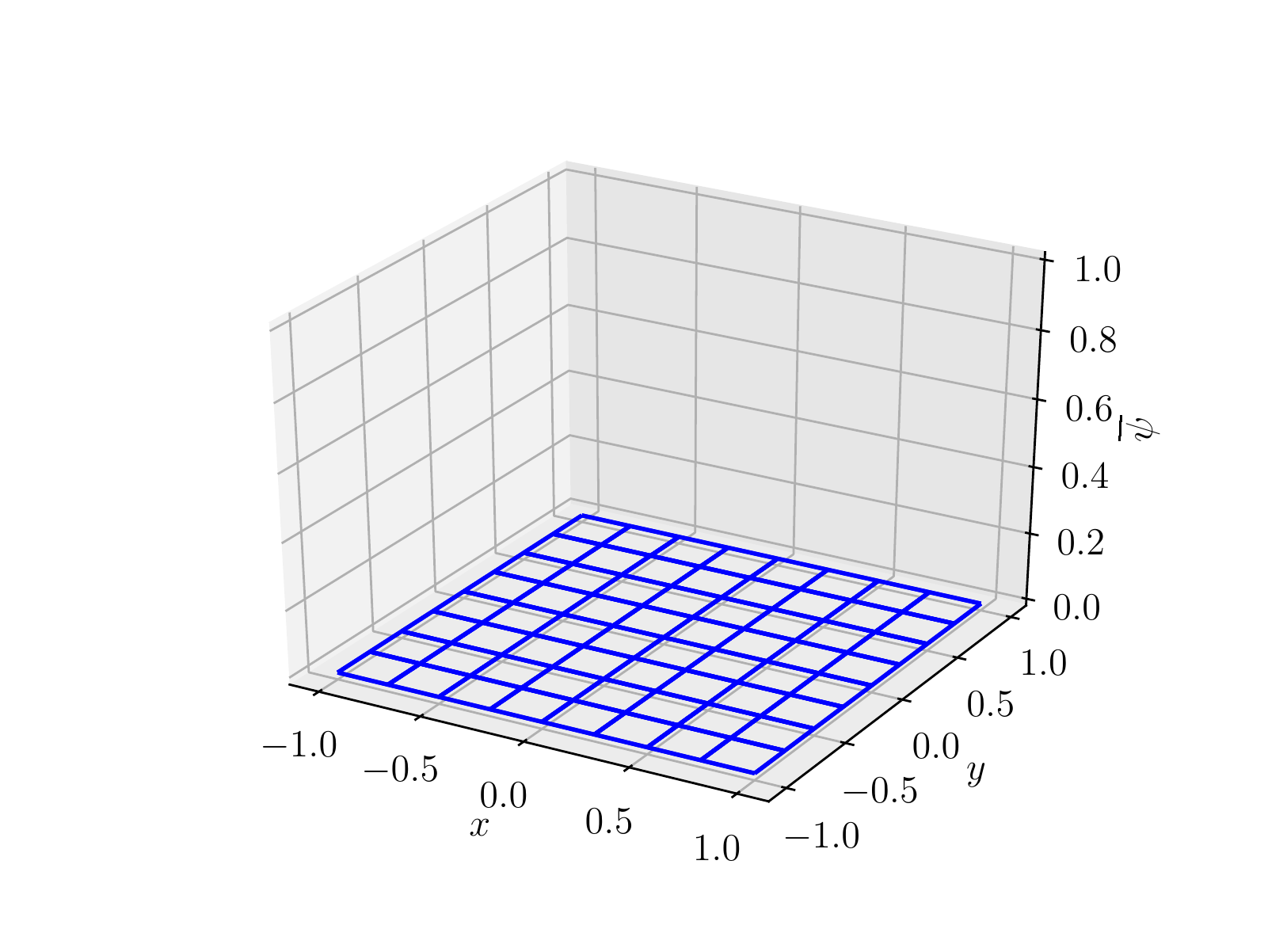}  
			\caption{$P^0$, $\veps = 2^{-14}$.}
		\end{subfigure}\\
		\begin{subfigure}[b]{0.23\textwidth}
			\includegraphics[width=\textwidth]{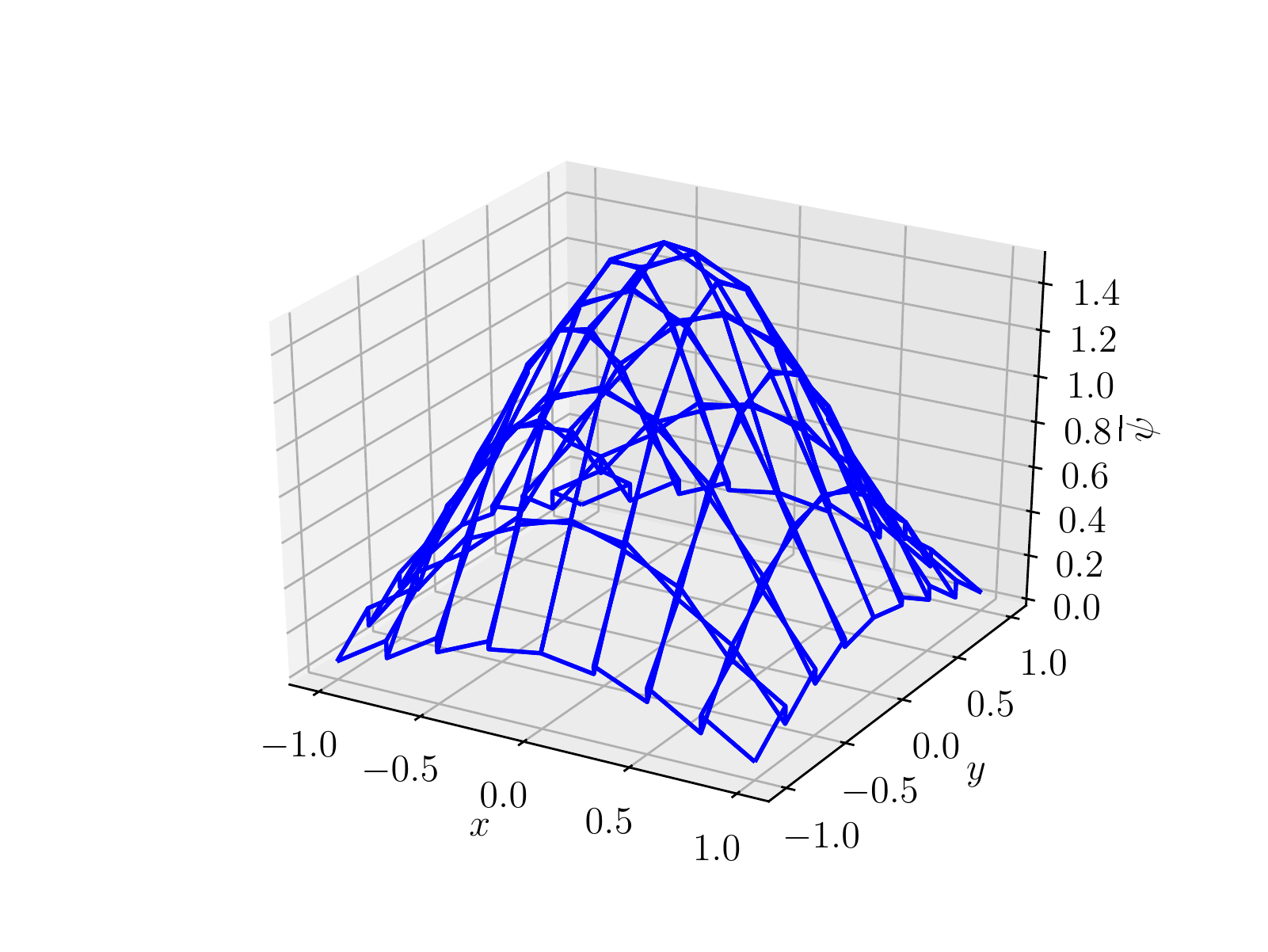}  
			\caption{$P^1$, $\veps = 1$.}
		\end{subfigure}
		~
		\begin{subfigure}[b]{0.23\textwidth}
			\includegraphics[width=\textwidth]{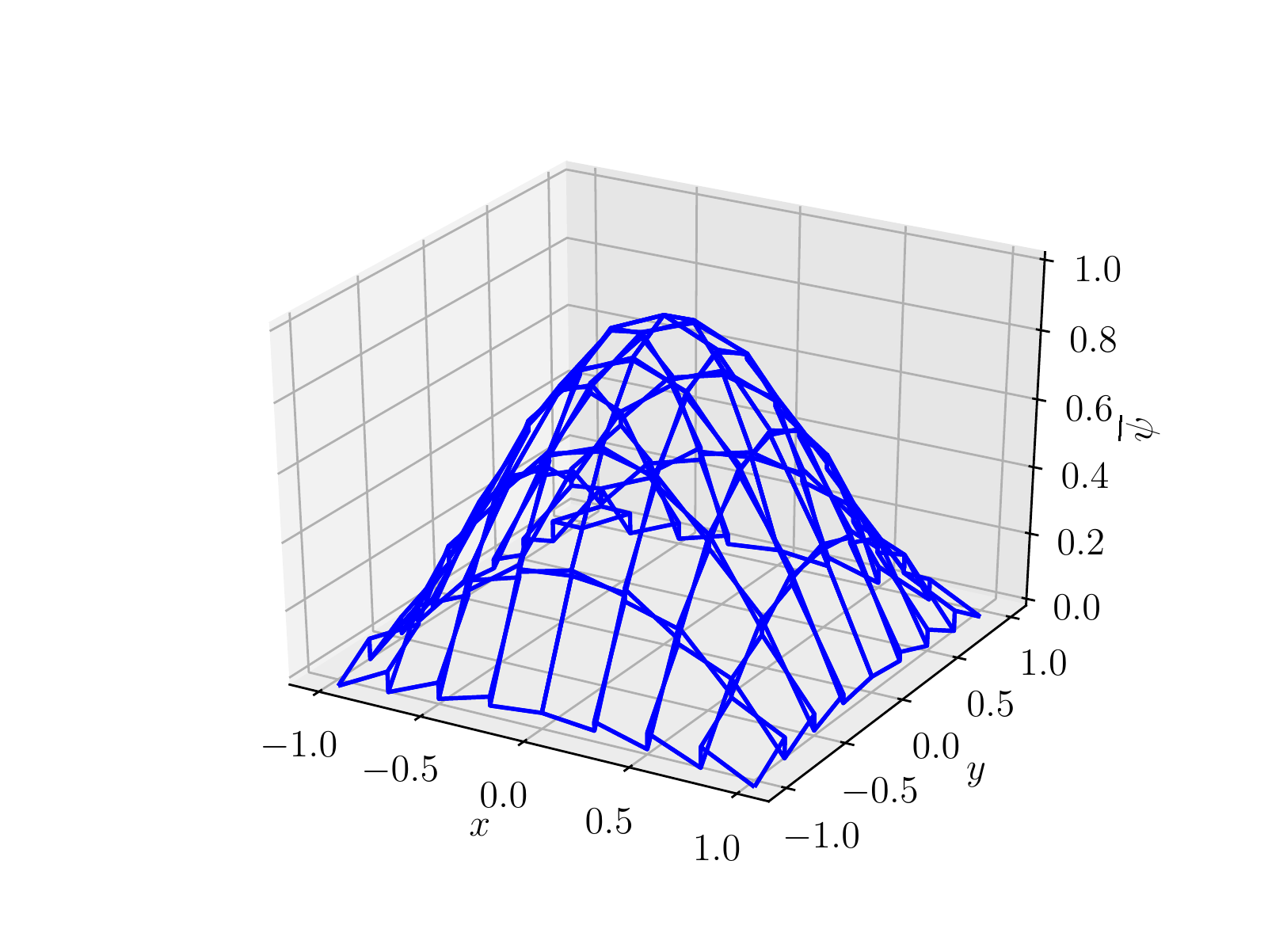}  
			\caption{$P^1$, $\veps = 2^{-6}$.}
		\end{subfigure}
		~
		\begin{subfigure}[b]{0.23\textwidth}
			\includegraphics[width=\textwidth]{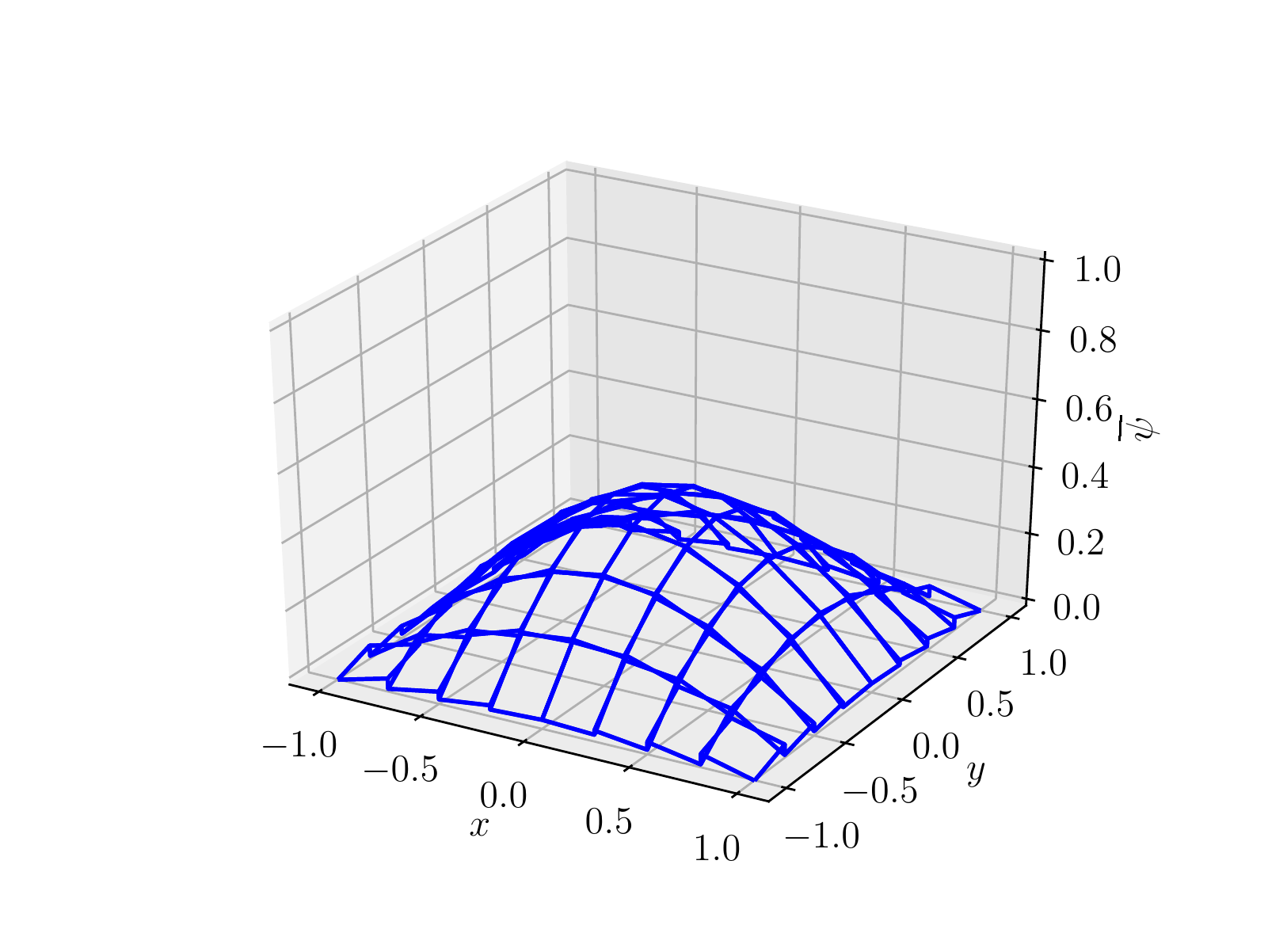}  
			\caption{$P^1$, $\veps = 2^{-10}$.}
		\end{subfigure}
		~
		\begin{subfigure}[b]{0.23\textwidth}
			\includegraphics[width=\textwidth]{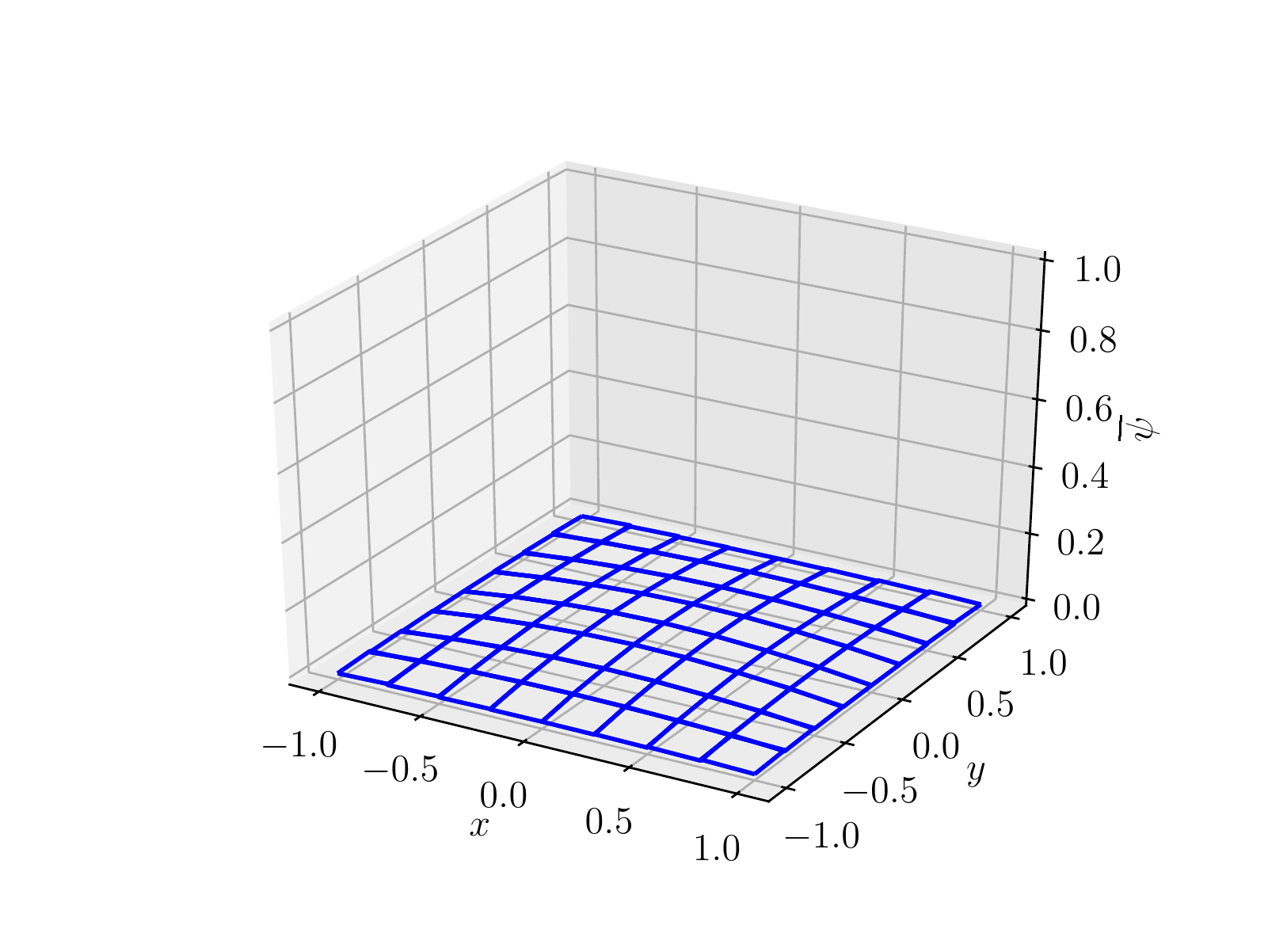}  
			\caption{$P^1$, $\veps = 2^{-14}$.}
		\end{subfigure}\\
		\begin{subfigure}[b]{0.23\textwidth}
			\includegraphics[width=\textwidth]{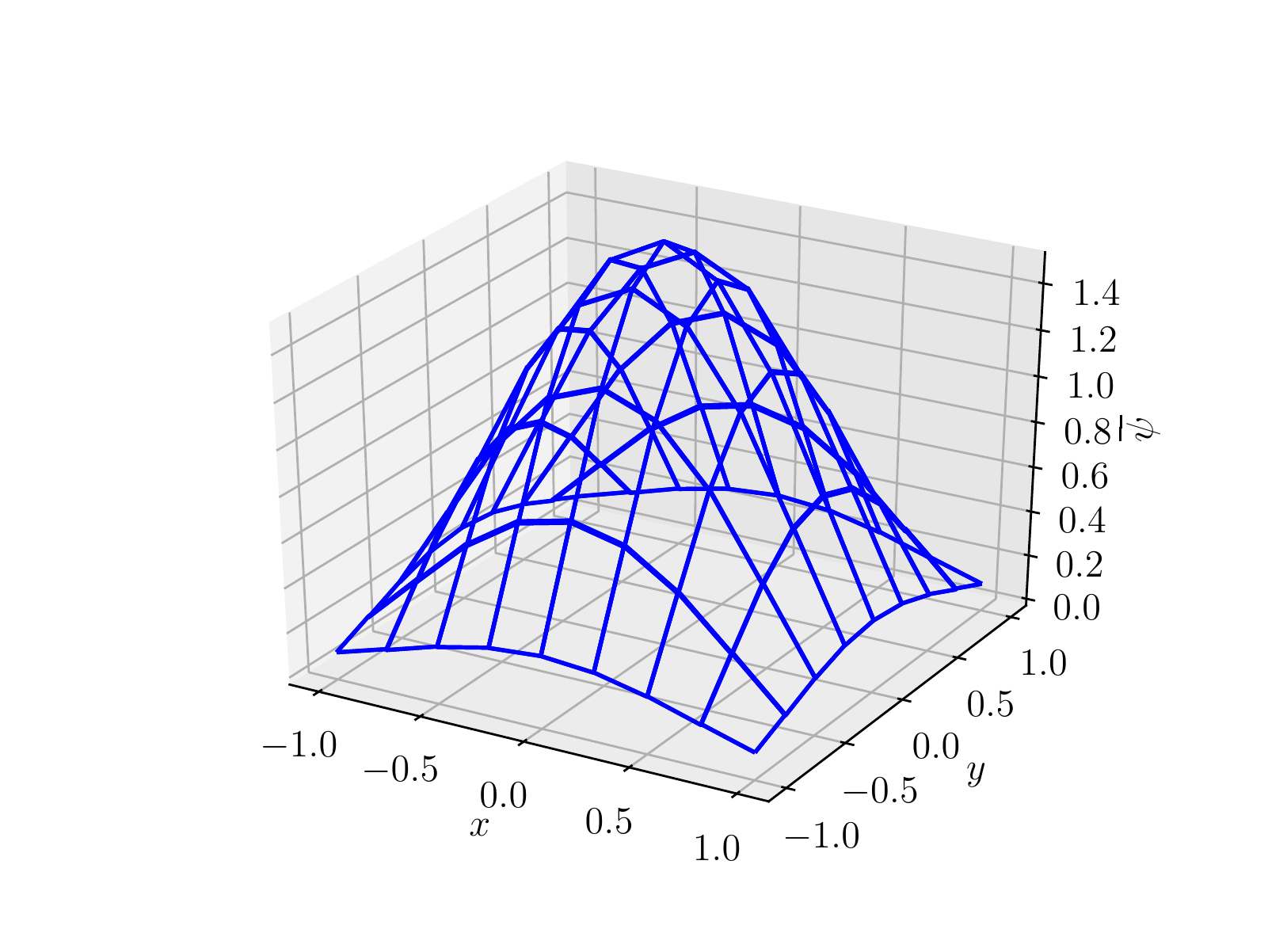}  
			\caption{$Q^1$, $\veps = 1$.}
		\end{subfigure}
		~
		\begin{subfigure}[b]{0.23\textwidth}
			\includegraphics[width=\textwidth]{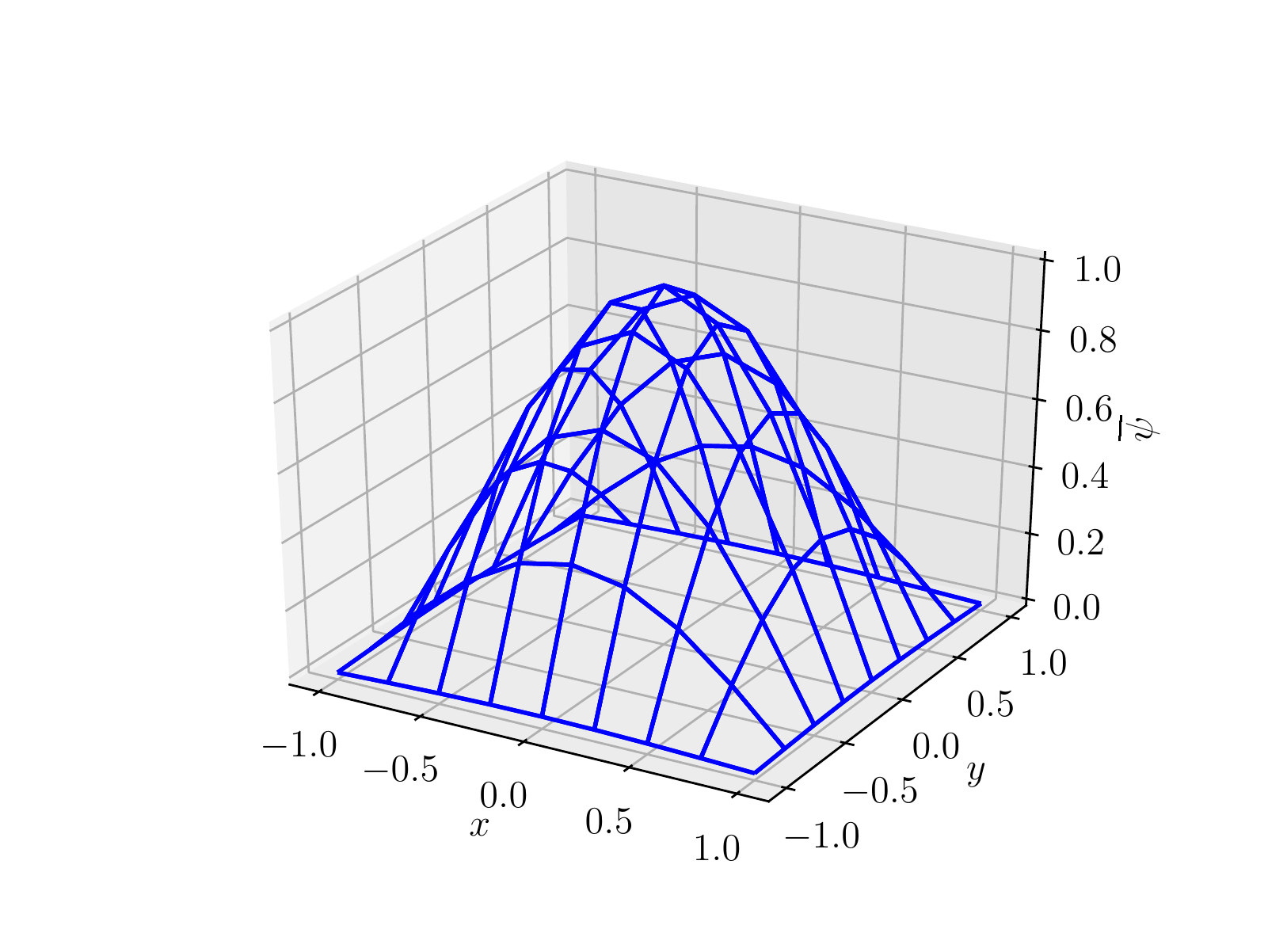}  
			\caption{$Q^1$, $\veps = 2^{-6}$.}
		\end{subfigure}
		~
		\begin{subfigure}[b]{0.23\textwidth}
			\includegraphics[width=\textwidth]{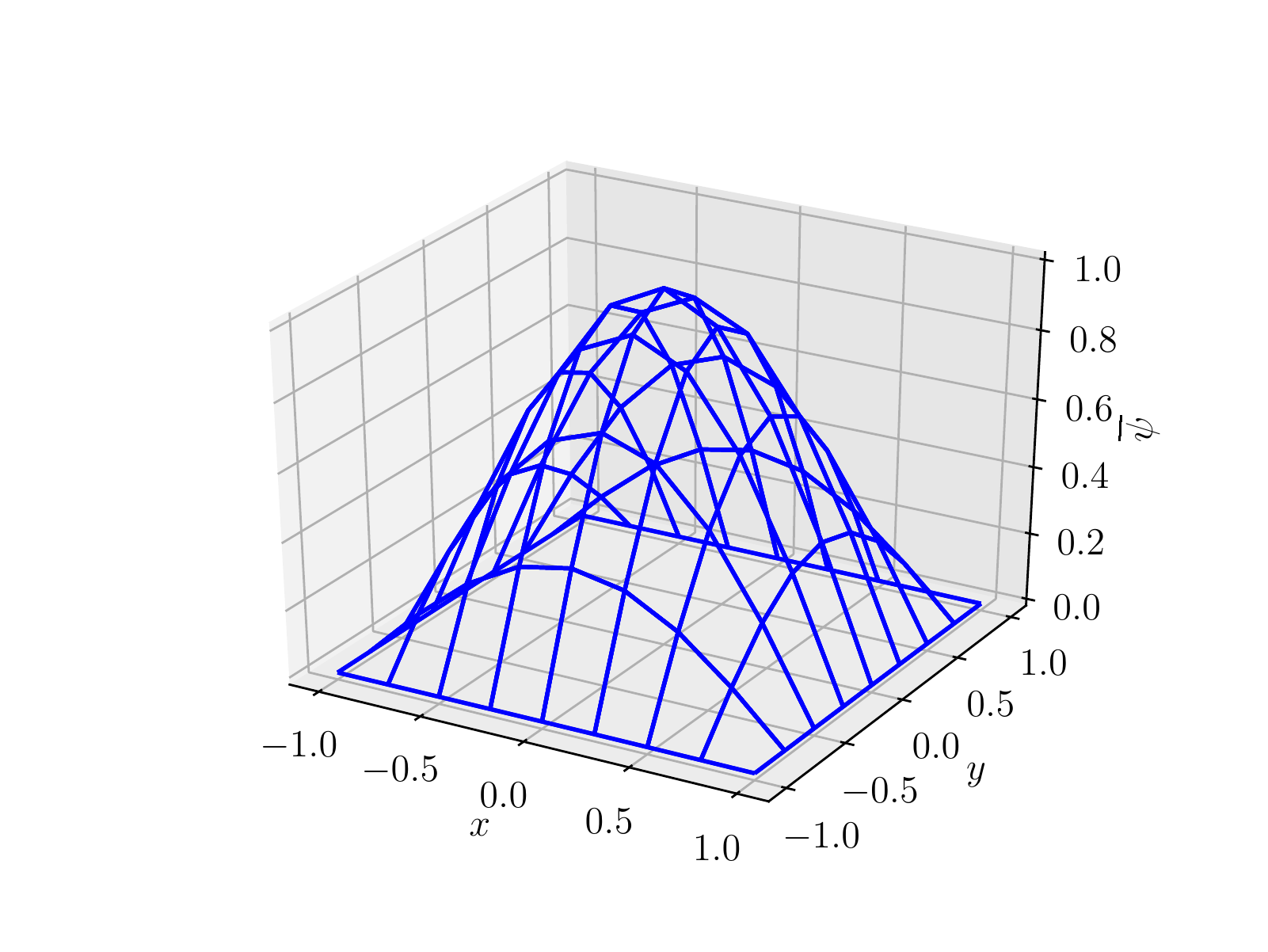}  
			\caption{$Q^1$, $\veps = 2^{-10}$.}
		\end{subfigure}
		~
		\begin{subfigure}[b]{0.23\textwidth}
			\includegraphics[width=\textwidth]{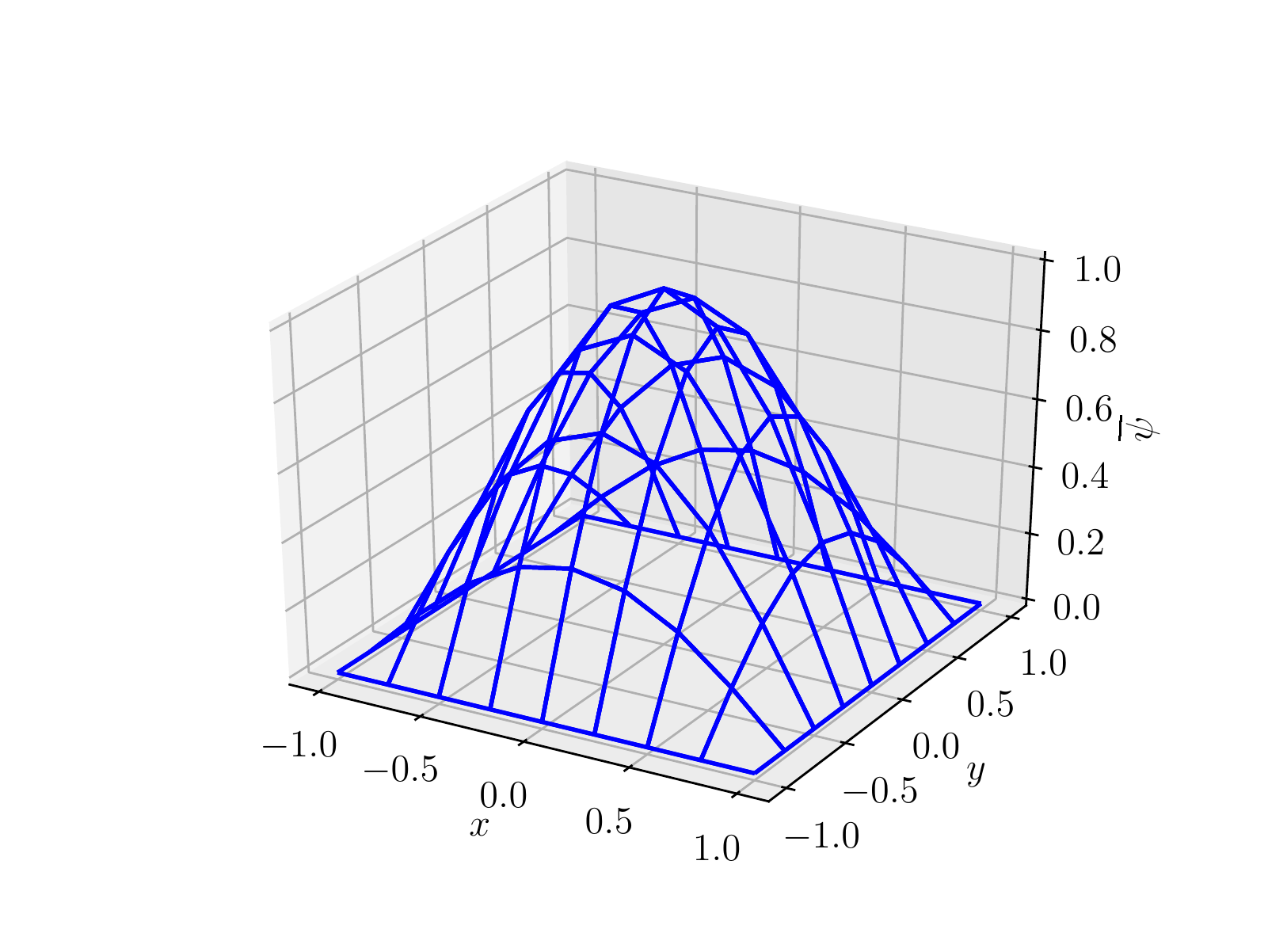}  
			\caption{$Q^1$, $\veps = 2^{-14}$.}
		\end{subfigure}\\
		\begin{subfigure}[b]{0.23\textwidth}
			\includegraphics[width=\textwidth]{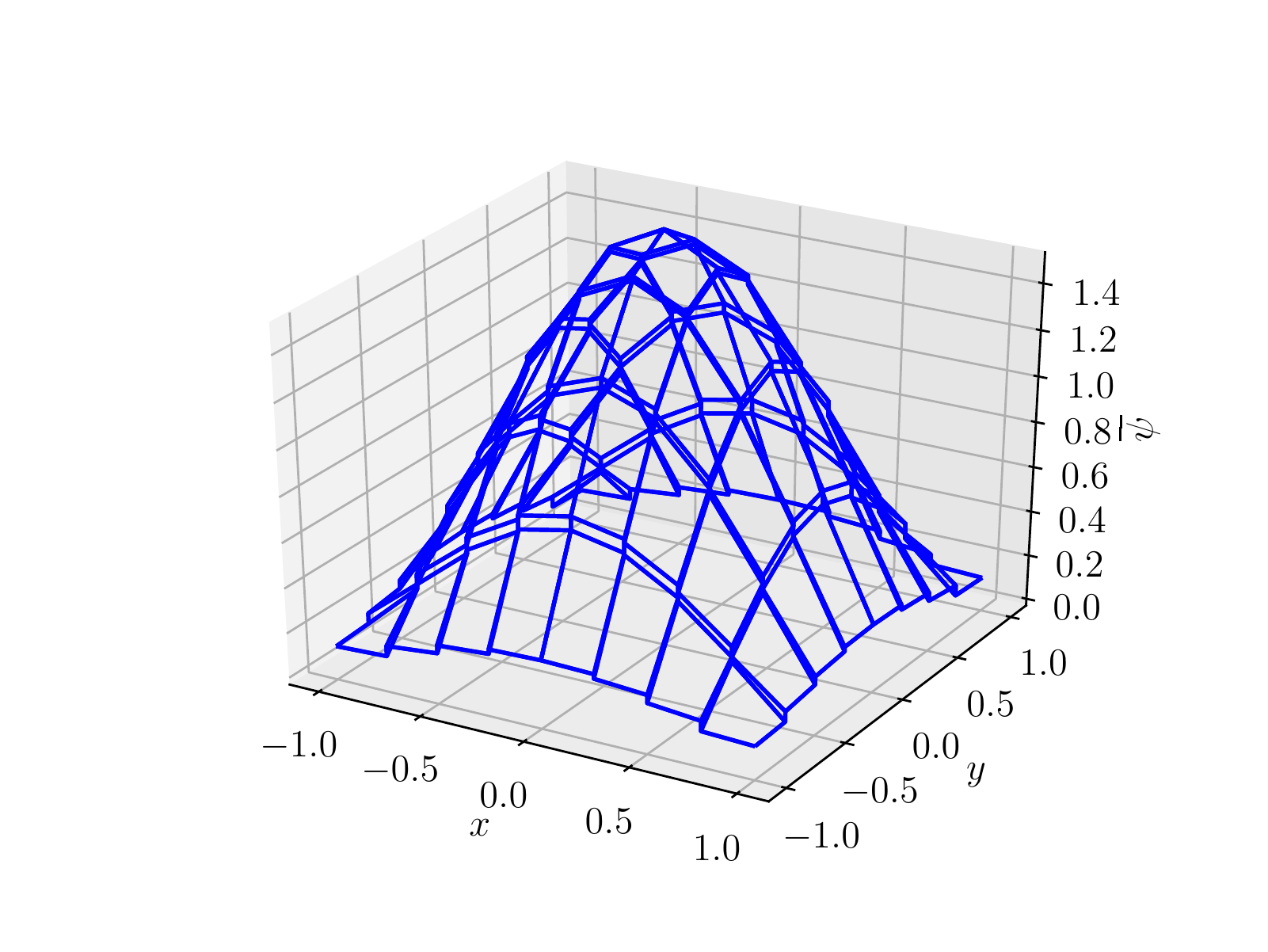}  
			\caption{LM, $\veps = 1$.}
		\end{subfigure}
		~
		\begin{subfigure}[b]{0.23\textwidth}
			\includegraphics[width=\textwidth]{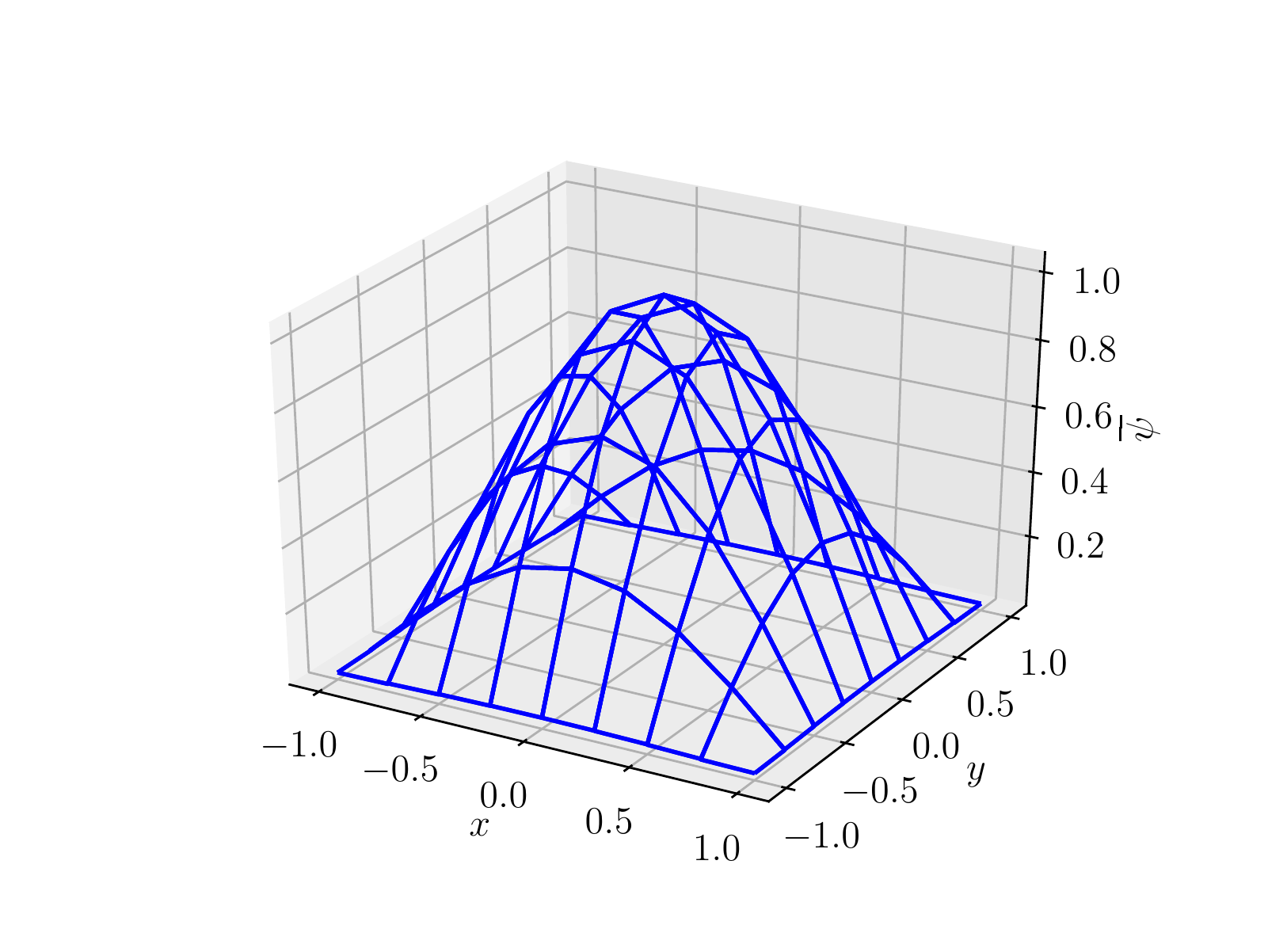}  
			\caption{LM, $\veps = 2^{-6}$.}
		\end{subfigure}
		~
		\begin{subfigure}[b]{0.23\textwidth}
			\includegraphics[width=\textwidth]{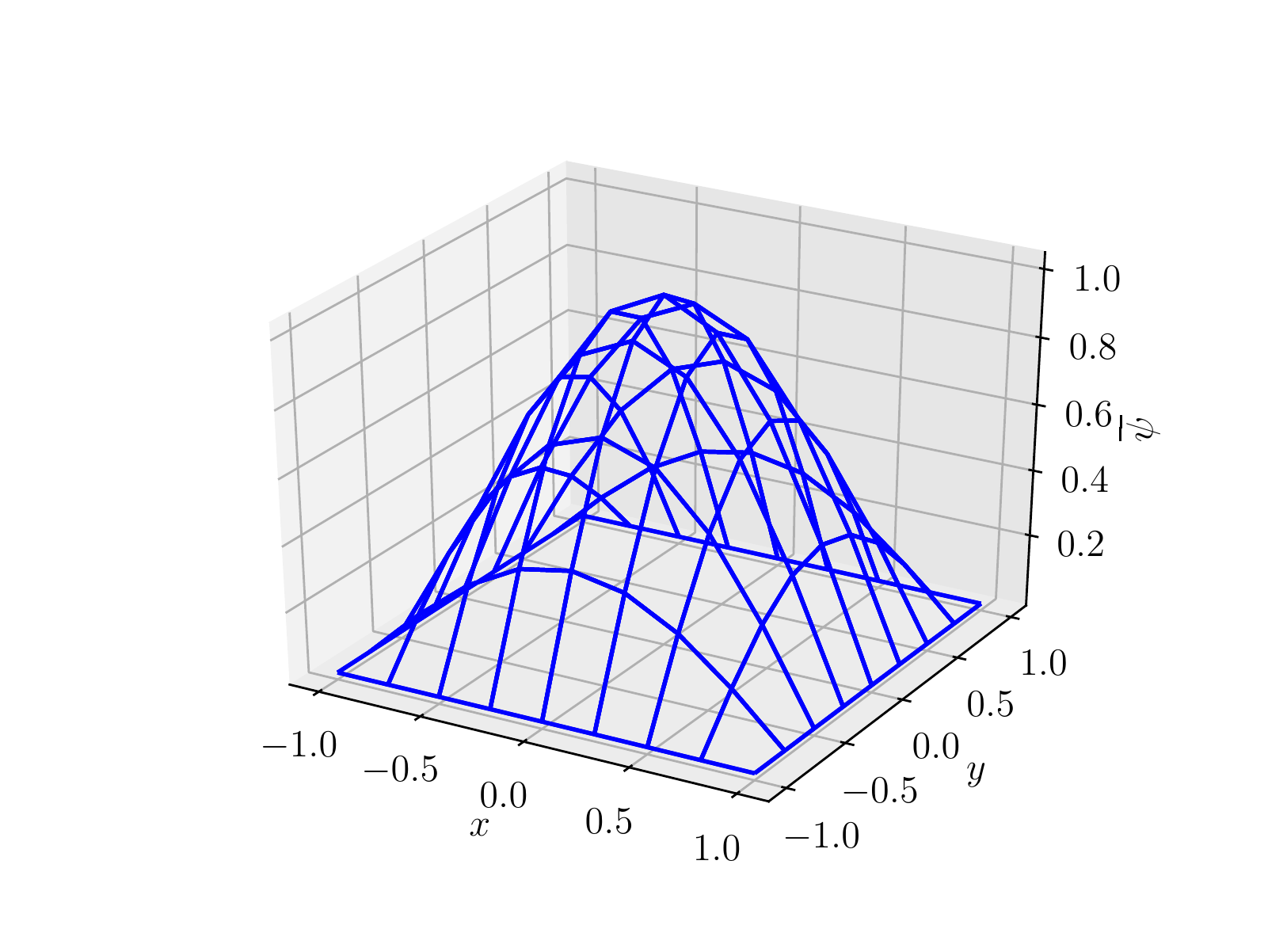}  
			\caption{LM, $\veps = 2^{-10}$.}
		\end{subfigure}
		~
		\begin{subfigure}[b]{0.23\textwidth}
			\includegraphics[width=\textwidth]{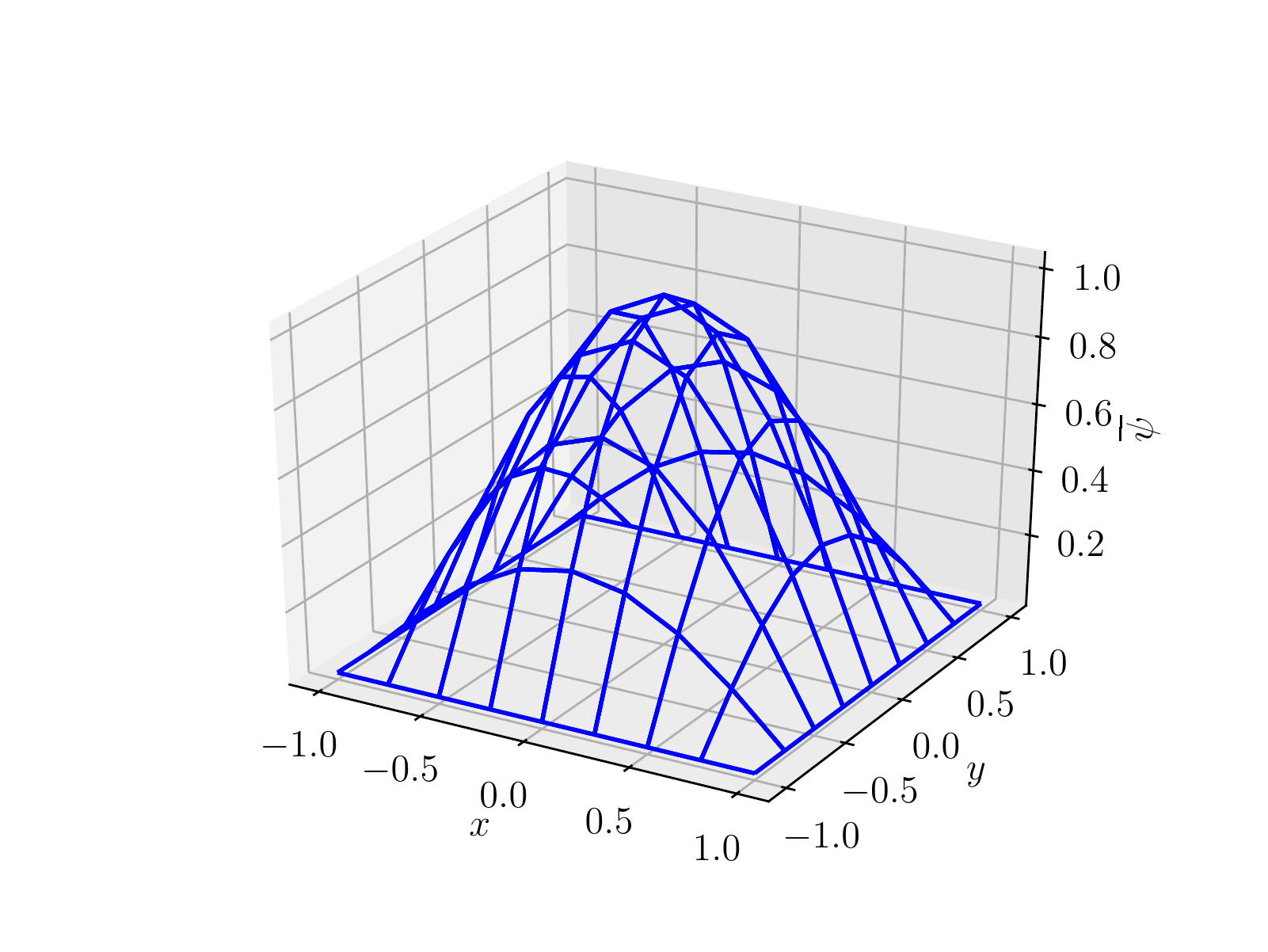}  
			\caption{LM, $\veps = 2^{-14}$.}
		\end{subfigure}\\
		\begin{subfigure}[b]{0.23\textwidth}
			\includegraphics[width=\textwidth]{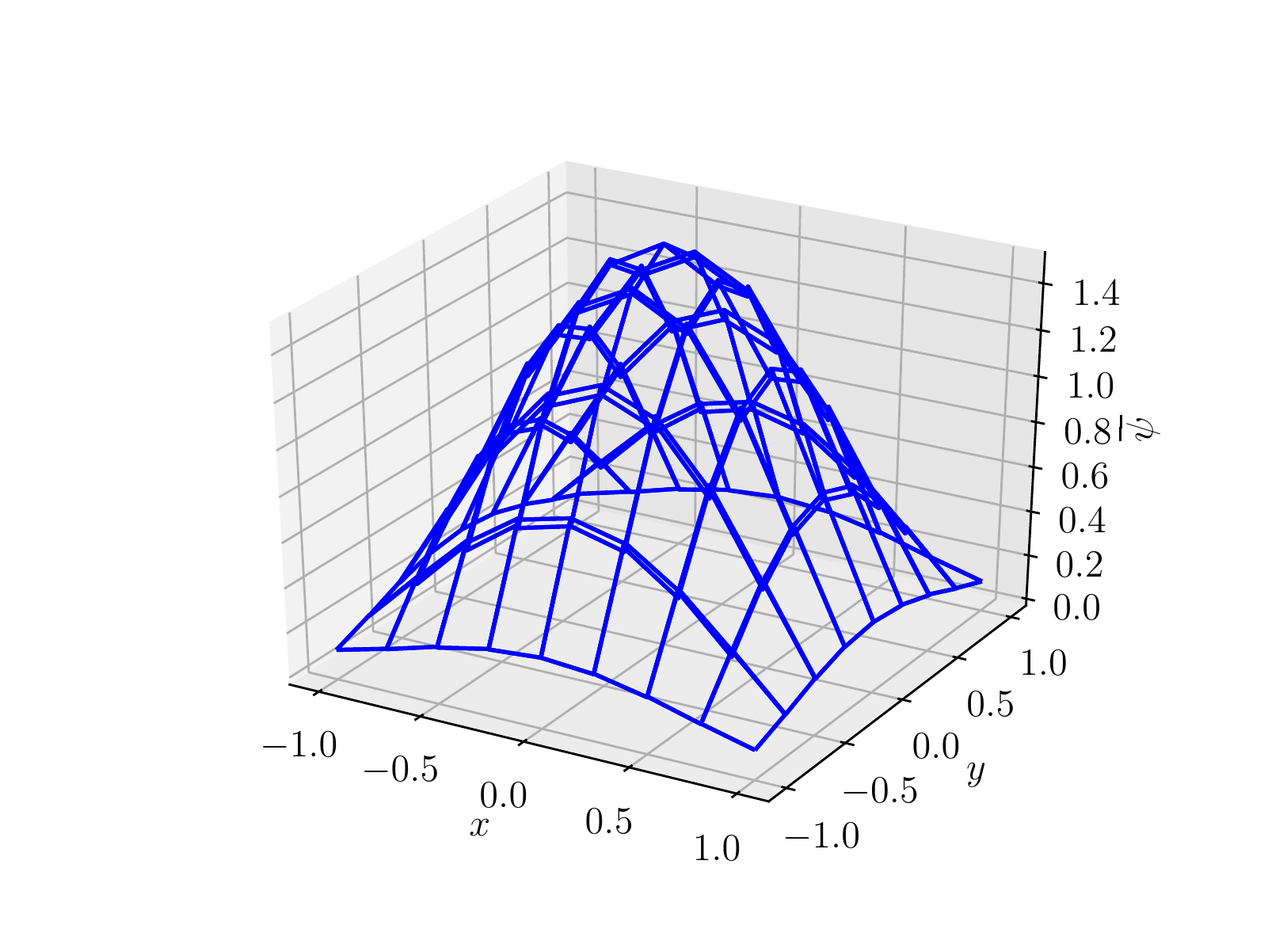}  
			\caption{RLM, $\veps = 1$.}
		\end{subfigure}
		~
		\begin{subfigure}[b]{0.23\textwidth}
			\includegraphics[width=\textwidth]{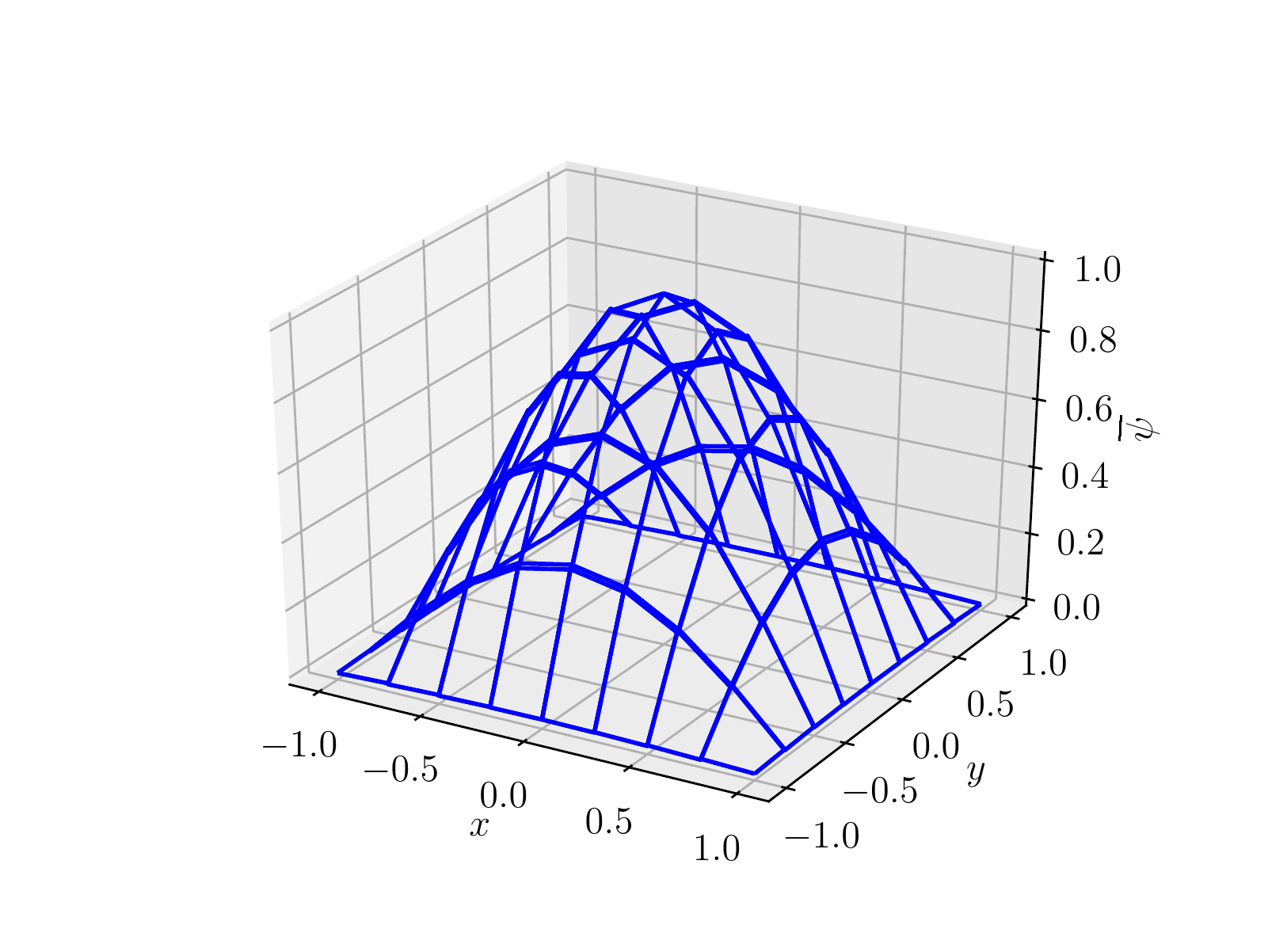}  
			\caption{RLM, $\veps = 2^{-6}$.}
		\end{subfigure}
		~
		\begin{subfigure}[b]{0.23\textwidth}
			\includegraphics[width=\textwidth]{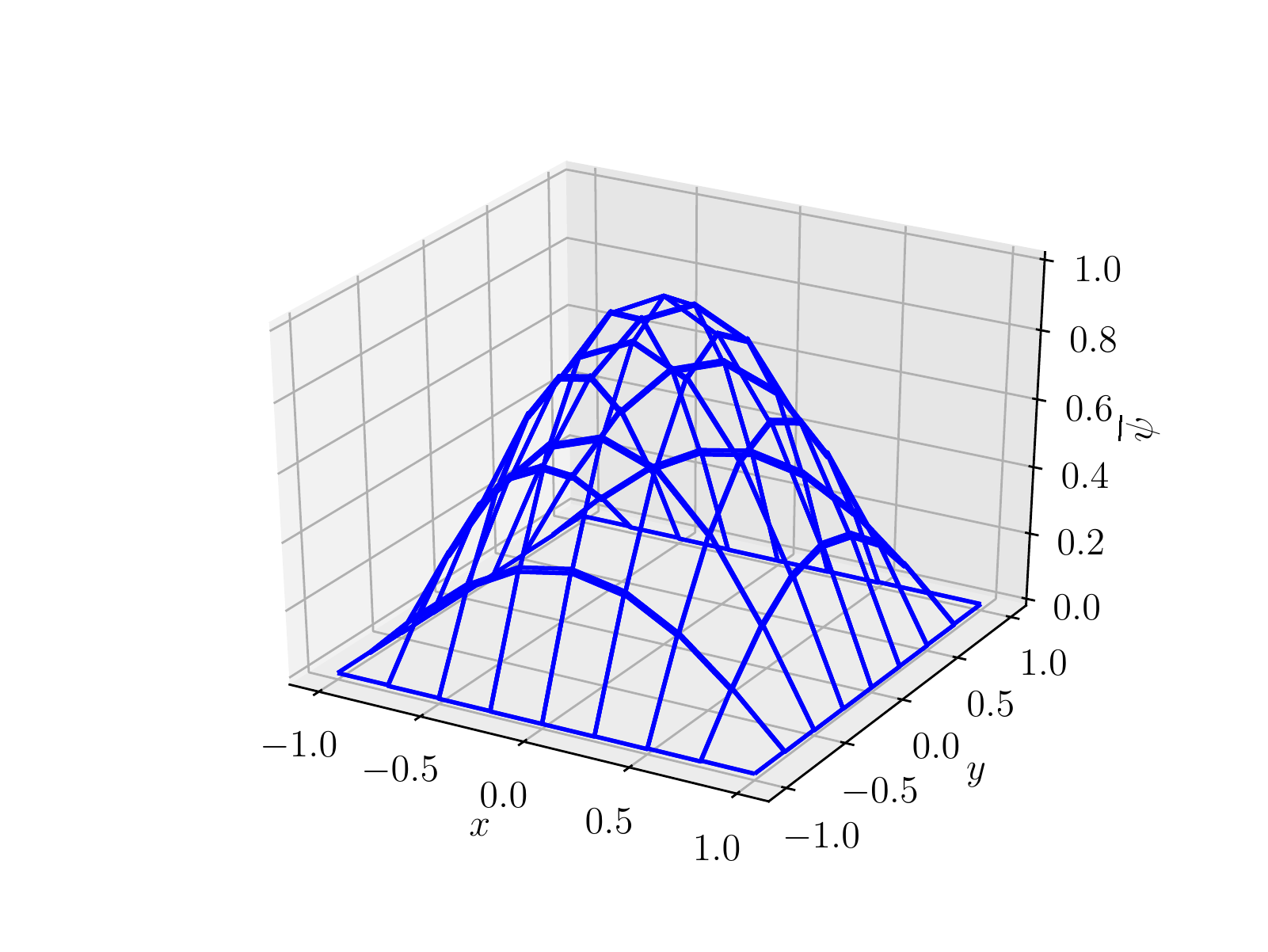}  
			\caption{RLM, $\veps = 2^{-10}$.}
		\end{subfigure}
		~
		\begin{subfigure}[b]{0.23\textwidth}
			\includegraphics[width=\textwidth]{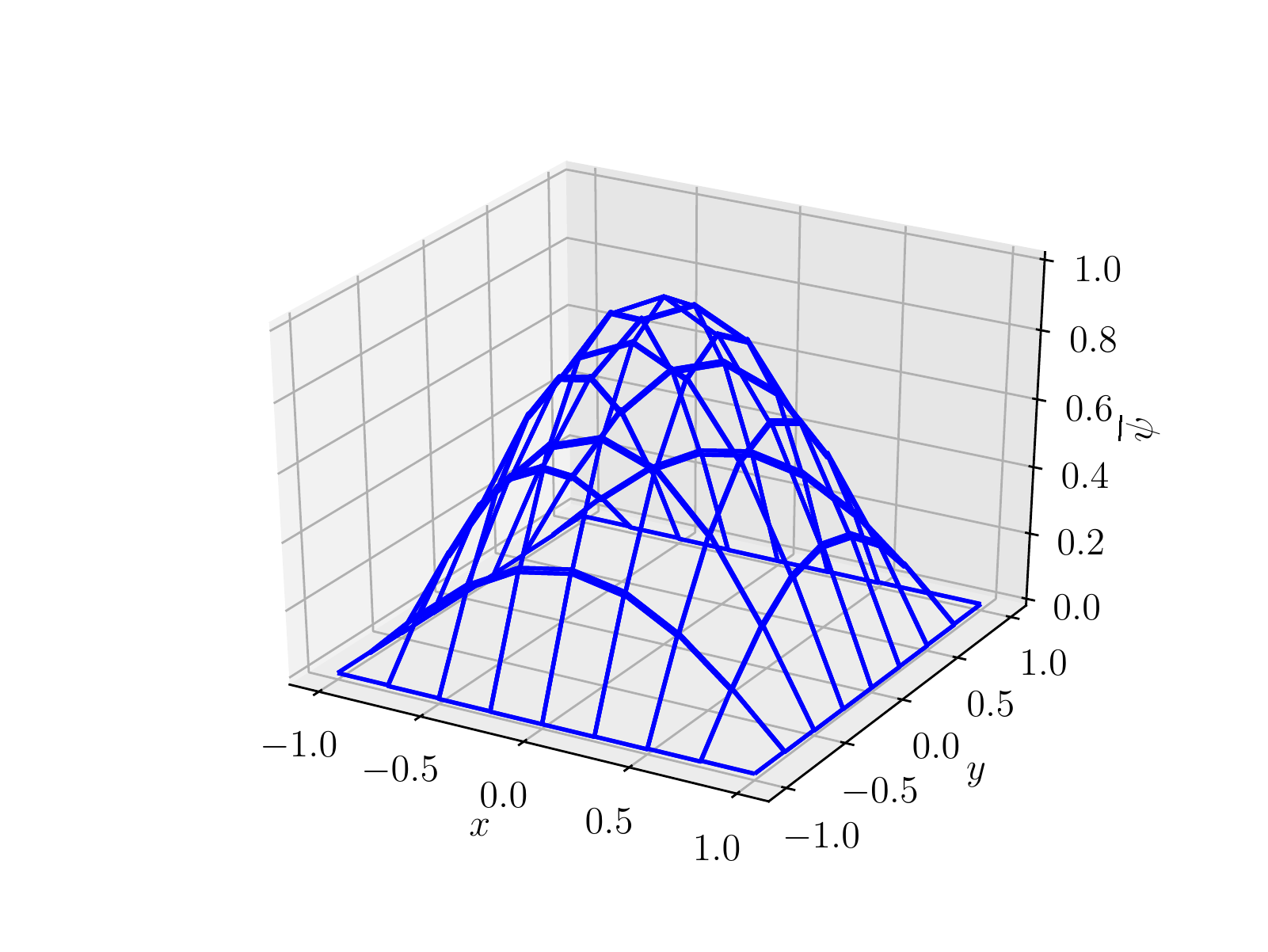}  
			\caption{RLM, $\veps = 2^{-14}$.}
		\end{subfigure}
		\caption{Profiles of numerical scalar fluxes in 
			\Cref{examp-2ddifflim}.}\label{fig-2d}
	\end{figure}
\end{example}

\section{Conclusions and future work}\label{sc-conclude}
\setcounter{equation}{0}
\setcounter{figure}{0}
\setcounter{table}{0}

In this paper, we study a class of low-memory $S_N$-DG methods for the radiative transport equation. In our first method, we use the variational form of the original $S_N$-DG scheme with a smaller finite element space, in which functions have isotropic slopes. This method preserves the asymptotic diffusion limit and can still be solved with sweeps. It is first-order accurate and exhibits second-order convergence rate near the diffusion limit. The second method is a correction of the first method with reconstructed slopes, which also preserves the diffusion limit and is second-order accurate in general settings (numerically). A summary of different methods and their properties can be found in \Cref{tab-compare}.

Future work will focus on the efficiency boost of the low-memory methods. Possible directions include: (i) further reducing degrees of freedom by enriching piecewise constant space only with continuous linear elements; (ii) developing preconditioners for linear systems; (iii) comparing numerical efficiency of the methods with different reconstruction approaches, including adaptivity.
\begin{table}[h!]
	\footnotesize
	\centering
	\begin{tabular}{c|c|c|c|c|c|c}
		\hline
		\multicolumn{2}{c|}{}&$P^0$-DG&$P^1$-DG&$Q^1$-DG&LMDG&RLMDG\\
		\hline
		\multicolumn{2}{c|}{Unisolvency when $\sig{a}\geq \delta_{\mathrm{a}} >0$}
		&\multicolumn{4}{c|}{Yes}&\multirow{3}{*}{\thead{Unknown.\\ Numeri-\\cally:\\ Yes}}\\
		\cline{1-6}
		\multirow{2}{*}{\thead{Preserves \\interior \\diffusion 
				limit}}&1D&\multirow{2}{*}{No}&\multicolumn{3}{c|}{Yes}&\\\cline{2-2}\cline{4-6}
		&2D& &\thead{Triangular: Yes \\Rectangular: No}&\multicolumn{2}{c|}{Yes}&\\
		\hline
		\multirow{2}{*}{\thead{Order of \\accuracy
		}}&isotropic&\multirow{2}{*}{1}&\multicolumn{2}{c|}{\multirow{2}{*}{2}}&2&\multirow{2}{*}{2}\\\cline{2-2}\cline{6-6}
		&anisotropic&&\multicolumn{2}{c|}{} &1&\\
		\hline
		\multirow{3}{*}{\thead{System \\dimension
		}}&1D&\multirow{3}{*}{$n_x$}&\multicolumn{4}{c}{$2n_x$}\\\cline{2-2}\cline{4-7}
		&2D&
		&{$3n_x$}&\multicolumn{3}{c}{$4n_x$}\\\cline{2-2}\cline{4-7}
		&3D&                      &{$4n_x$}&\multicolumn{3}{c}{$8n_x$}\\\cline{2-2}\cline{4-7}
		\hline
		\multirow{3}{*}{\thead{Solution \\dimension
		}}&1D&\multirow{3}{*}{$n_\Omega \cdot
			n_x$}&\multicolumn{2}{c|}{$2n_\Omega\cdot
			n_x$}&\multicolumn{2}{c}{$n_\Omega\cdot n_x+n_x$}\\\cline{2-2}\cline{4-7}
		&2D&
		&{$3n_\Omega\cdot n_x$}&$4n_\Omega\cdot
		n_x$&\multicolumn{2}{c}{$n_\Omega\cdot n_x+3n_x$}\\\cline{2-2}\cline{4-7}
		&3D&&{$4n_\Omega\cdot n_x$}&$8n_\Omega\cdot
		n_x$&\multicolumn{2}{c}{$n_\Omega\cdot n_x+7n_x$}\\
		\hline
	\end{tabular}
	\caption{Comparison of different methods.}\label{tab-compare}
\end{table}
\vspace{-0.5cm}
\section*{Acknowledgment}
ZS would like to thank Oak Ridge National Laboratory for hosting his NSF
internship and to thank the staff, post-docs, interns and other visitors at ORNL for their
warm hospitality.

\appendix
\section{Assembly of the matrices}\label{ap-sndg-matrix}
From the variational form \eqref{eq-vari-LSQ}, we can derive a matrix system
$\bL \bPsi = \bS\bPsi + \bQ$.
The matrices are defined as
$\bL = [L^{(l,p,r),(l',p',r')}]_{(n_\Omega\cdot n_x \cdot n_P)\times (n_\Omega\cdot n_x \cdot n_P)}$, 
$\bS = [S^{(l,p,r),(l',p',r')}]_{(n_\Omega\cdot n_x \cdot n_P)\times (n_\Omega\cdot n_x \cdot n_P)}$ and  \\
$\bQ = [Q^{(l,p,r)}]_{(n_\Omega\cdot n_x \cdot n_P)}$, where
\begin{align}
&    L^{(l,p,r),(l',p',r')} = L(\xi^{l',p',r'},\xi^{l,p,r})
= \delta_{ll'} w_l \sum_{K\in \cT_h}\left(-\int_K b^{p',r'}\Omega_l\cdot \nabla
b^{p,r}dx\right.\\ 
&
\left. + \int_{\partial K} \widehat{b}^{p',r'}\Omega_l\cdot \nu_K (b^{p,r})^{\mathrm{int}} dx+\int_K \left(\frac{\sigma_s}{\veps}+\veps\sigma_a\right)b^{p',r'}b^{p,r}dx\right),\nonumber\\
& S^{(l,p,r),(l',p',r')}= S(\xi^{l',p',r'},\xi^{l,p,r})=
w_lw_{l'} \sum_{K\in \cT_h}\int_K \frac{\sigma_s}{\veps} b^{p,r}b^{p',r'} dx,\\
&Q^{(l,p,r)}= Q(\xi^{l,p,r})=w_l\left( \sum_{K\in
	\cT_h}\int_K \veps q b^{p,r} dx - \sum_{K\in\cT_h}\int_{\partial K
	\cap F_h^\partial}\alpha 
\Omega_l \cdot \nu_K b^{p,r} dx\right).
\end{align}
Note that $\bS$ can be decomposed as the product of two matrices $\bS = \bM\bP$, where
$\bM = [M^{(l,p,r),(p'',r'')}]_{(n_\Omega\cdot n_x\cdot n_P)\times( n_x\cdot
	n_P)}$ with
$M^{(l,p,r),(p'',r'')} = w_l \sum_{K\in \cT_h}\int_K 
\frac{\sigma_s}{\veps} b^{p,r}b^{p'',r''} dx$,
and $\bP = [P^{(p'',r''),(l',p',r')}]_{(n_x\cdot n_P)\times (n_\Omega\cdot n_x\cdot n_P)}
$ with 
$P^{(p'',r''),(l',p',r')} = w_{l'}\delta_{p''p'}\delta_{r''r'}$.
Hence the matrix equation becomes
$\bL\bPsi = \bM \bP \bPsi + \bQ$.

\section{Proof of \Cref{thm-sndg-mat-psi}}\label{ap-sndg-mat-psi}
\begin{proof}
	Since the variational problem of the $S_N$-DG method is unisolvent, $\bL - \bM \bP$ is invertible. 
	To show $\bI_{n_x\cdot n_P}-\bP\bL^{-1}\bM$ is invertible, one only needs to check
	\begin{equation}(\bI_{n_x\cdot n_P}-\bP\bL^{-1}\bM)\bX = \mathbf{0} \Rightarrow \bX = \mathbf{0}.\end{equation}
	Indeed, with $(\bI_{n_x\cdot n_P}-\bP\bL^{-1}\bM)\bX = \mathbf{0}$, we have
	\begin{equation}
	\begin{aligned}
	(\bL -\bM \bP) (\bL^{-1}\bM\bX) = \mathbf{0}
	&\Rightarrow \bL^{-1}\bM\bX = \mathbf{0} \Rightarrow \bM\bX = \mathbf{0} \Rightarrow \bX = \mathbf{0}.
	\end{aligned}
	\end{equation}
	Hence $\bI_{n_x\cdot n_P}-\bP\bL^{-1}\bM$ is invertible.
\end{proof}

\section{Proof of \Cref{lem-B11}}\label{ap-B11}

\begin{proof}
	Note $\bB_{11}$ corresponds to the variational problem \eqref{eq-sndglm-scheme} with $\cW_h = \cV_{h,1}$. Since the variational problem is unisolvent (even when $\sig{a} = 0$), $\bB_{11}
	$ is invertible. 
	
	$\forall u,v\in \cD_{h,1}$, since $u = \overline{u}$, we have
	\begin{align}
	\sum_{j=1}^{n_\Omega} w_j \int_{ K} u_j \Omega_j\cdot\nabla v_j dx = \int_{K}
	\bar{u}
	\left(\sum_{j=1}^{n_\Omega}w_j\Omega_j \right)\cdot\nabla \bar{v} dx
	&=0,\\
	\sum_{j=1}^{n_\Omega}\sum_{K\in \cT_h} \int_{ K} \frac{\sig{s}}{\veps}
	(u_j-\overline{u}) v_j dx&= 0.\end{align}
	Therefore,
	\begin{equation}B_{11}(u,v) = 
	\sum_{K\in \cT_h} \int_{K} \veps\sig{a} \overline{u}\ 
	\overline{v} dx + 
	\sum_{j=1}^{n_\Omega} w_j \sum_{K\in \cT_h} \int_{\partial K} \Omega_j\cdot
	\nu_K \widehat{u}_j v_j^{\mathrm{int}} dx.\end{equation}
	We would like to write the last term as a summation with respect to edges.
	$\nu_{F}^+$ is defined as the unit normal of an edge $F$ such that $e_1 \cdot
	\nu_{F}^+>0$. $e_1$ is the vector in $\mathbb{R}^d$, whose first component is $1$ and others are $0$. Suppose $\nu_{F}^+$ is pointing from $K^+$ to $K^-$, we denote by $[v]_j =
	v_j|_{K^+}-v_j|_{K^-}$. Then 
	\begin{equation}
	\begin{aligned}\sum_{j=1}^{n_\Omega} w_j \sum_{K\in \cT_h} \int_{\partial K} \Omega_j\cdot
	\nu \widehat{u}_j v_j dx &= 
	\sum_{j=1}^{n_\Omega} w_j \sum_{F\in \cF_h} \int_{F} \Omega_j\cdot
	\nu_{F}^+ \widehat{u}_j [v]_j dx\\
	&=\sum_{\substack{j=1,\dots,n_\Omega,\\  \Omega_j\cdot e_1>0}} w_j \sum_{F\in \cF_h} \int_{F}| \Omega_j\cdot
	\nu_{F}^+| [u]_j [v]_j dx.
	\end{aligned}
	\end{equation}
	The last equality uses the central symmetry of the angular quadrature. 
	Hence
	\begin{equation}
	B_{11}(u,v) = 
	\sum_{K\in \cT_h} \int_{K} \veps\sig{a} \overline{u}\ 
	\overline{v} d\Omega_N dx + 
	\sum_{\substack{j=1,\dots,n_\Omega\\ \Omega_j\cdot e_1>0}} w_j \sum_{F\in \cF_h} \int_{F}| \Omega_j\cdot
	\nu_{F}^+| [u]_j [v]_j dx.
	\end{equation}
	Here $\Omega_j\cdot e_1$ gives the first component of $\Omega_j$. Since $B_{11}$ is a symmetric and positive semi-definite bilinear form,
	$\bB_{11}$ is then a symmetric and positive semi-definite matrix. The positive definiteness is implied by the fact  $\bB_{11}$ is invertible. 
\end{proof}
\section{Proof of \Cref{thm-K}}\label{ap-K}
We first prove the following lemma. 
\begin{LEM}\label{lem-key}
	Suppose $\bI_m - \bC \bA - \bD \bB $ is invertible, where $\bA$ is an $n\times
	m$ matrix, $\bB$ is an $n'\times m$ matrix, $\bC$ is an
	$m\times n$ matrix, and $\bD$ is an $m \times n'$ matrix. Then 
	$\bI_{n+n'} - \left[\begin{matrix}
	\bA\\\bB\end{matrix}\right]\left[\begin{matrix}\bC &\bD\end{matrix}\right]$ is invertible.
\end{LEM}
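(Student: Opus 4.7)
The plan is to prove the lemma by the standard Schur/Sylvester complement trick: show that any $\bz$ in the kernel of $\bI_{n+n'}-\bY\bX$ is forced to be zero, where I set
\begin{equation*}
\bY=\begin{bmatrix}\bA\\ \bB\end{bmatrix}\in\mathbb{R}^{(n+n')\times m},\qquad
\bX=\begin{bmatrix}\bC&\bD\end{bmatrix}\in\mathbb{R}^{m\times(n+n')},
\end{equation*}
so that the matrix in question is $\bI_{n+n'}-\bY\bX$ and moreover $\bX\bY=\bC\bA+\bD\bB$. The key observation is the elementary identity
\begin{equation*}
(\bI_{n+n'}-\bY\bX)\bz=0\ \Longrightarrow\ \bz=\bY\bX\bz=\bY\bw,\qquad \bw:=\bX\bz\in\mathbb{R}^m,
\end{equation*}
which reduces a kernel vector on the larger space to a kernel vector on the smaller space.

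First I would take an arbitrary $\bz=[\bz_1^T,\bz_2^T]^T$ with $(\bI_{n+n'}-\bY\bX)\bz=0$, set $\bw=\bX\bz\in\mathbb{R}^m$, and read off $\bz_1=\bA\bw$, $\bz_2=\bB\bw$ from $\bz=\bY\bw$. Then I would substitute back to compute
\begin{equation*}
\bw=\bX\bz=\bC\bz_1+\bD\bz_2=(\bC\bA+\bD\bB)\bw,
\end{equation*}
which means $(\bI_m-\bC\bA-\bD\bB)\bw=0$. Finally, the hypothesis that $\bI_m-\bC\bA-\bD\bB$ is invertible forces $\bw=0$, and therefore $\bz_1=\bA\bw=0$ and $\bz_2=\bB\bw=0$, i.e.\ $\bz=0$. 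This establishes injectivity of $\bI_{n+n'}-\bY\bX$ and, since it is square, also invertibility.

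There is no real obstacle here: the argument is simply the Sylvester/Weinstein--Aronszajn swap in disguise (one could alternatively invoke $\det(\bI_{n+n'}-\bY\bX)=\det(\bI_m-\bX\bY)$ directly), and all the manipulations are linear-algebra identities. The only point requiring a little care is that the ``kernel transfer'' step $\bz=\bY\bw$ is what allows the $(n+n')$-dimensional nullspace condition to be rewritten as an $m$-dimensional one; once that is in place, the hypothesis does the rest. This lemma will subsequently be applied in the proof of \Cref{thm-K} with $\bA,\bC$ and $\bB,\bD$ taken from the block decomposition of $\bK$, using the known invertibility of the corresponding reduced operator associated with the variational problem \eqref{eq-gene-scheme} on $\cV_h^{\lm}$.
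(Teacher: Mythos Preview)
Your proof is correct and is essentially identical to the paper's own argument: both take a vector in the kernel of $\bI_{n+n'}-\bY\bX$, form the auxiliary vector $\bw=\bX\bz$ (the paper calls it $\bX=\bC\bX_0+\bD\bX_1$), deduce $(\bI_m-\bC\bA-\bD\bB)\bw=0$, and conclude from the hypothesis. Your additional remark about the Sylvester determinant identity is a valid alternative but not pursued in the paper.
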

\begin{proof}
	It suffices to show that 
	$\left(\bI_{n+n'} - \left[\begin{matrix}
	\bA\\\bB\end{matrix}\right]\left[\begin{matrix}\bC
	&\bD\end{matrix}\right]\right)
	\left[\begin{matrix}
	\bX_0\\\bX_1\end{matrix}\right] =
	\mathbf{0}$ 
	implies $\bX_0$ and $\bX_1$ are $\mathbf{0}$. Indeed, with $\bX = \bC \bX_0 + \bD
	\bX_1$, the equality gives
	\begin{equation}\label{eq-keylem}
	\bA \bX = \bA\bC \bX_0+\bA\bD \bX_1 = \bX_0\quand \bB \bX = \bB\bC
	\bX_0+\bB\bD \bX_1 = \bX_1,
	\end{equation}
	which implies
	$(\bI_m-\bC\bA-\bD\bB)\bX = \bX-\bC \bX_0-\bD \bX_1 = \mathbf{0}$.
	Since $\bI_m-\bC \bA-\bD\bB$ is invertible, $\bX = \mathbf{0}$. Using \eqref{eq-keylem} we
	have $\bX_0 = \bA \bX = \mathbf{0}$ and $\bX_1 = \bB \bX = \mathbf{0}$.
\end{proof}

Then we show $\bK$ is invertible.
\begin{proof}
	One can see from \Cref{thm-gene-uni-solvency} that the variational problem \eqref{eq-gene-scheme} is uniquely solvable. Therefore the associated system 
	\eqref{eq-lmdg-psi0} is also unisolvent and $\bI_{n_\Omega\cdot n_x}- \bL_{00}^{-1} \bM_0 \bP_0 
	-\bL_{00}^{-1}  \bL_{01}\bSigma^T\bB_{11}^{-1} \bSigma\bL_{10}$ is invertible. 
	Taking $\bA = \bP_0$, $\bB =\bSigma \bL_{10}$, 
	$\bC = \bL_{00}^{-1}\bM_0$ and $\bD =
	\bL_{00}^{-1}\bL_{01}\bSigma^T\bB_{11}^{-1}$ in \Cref{lem-key}, one
	can show that the matrix 
	$\bI_{n_x \cdot n_P} -  
	\left[\begin{matrix}\bP_0\\ \bSigma \bL_{10}\end{matrix}\right]
	\left[\begin{matrix}\bL_{00}^{-1}\bM_0&
	\bL_{00}^{-1}\bL_{01}\bSigma^T\bB_{11}^{-1}
	\end{matrix}\right]
	$
	is invertible. Hence 
	$\bK = \left(\bI_{n_x\cdot n_P}-
	\left[\begin{matrix}\bP_0\\ \bSigma \bL_{10}\end{matrix}\right]
	\left[\begin{matrix}\bL_{00}^{-1}\bM_0&
	\bL_{00}^{-1}\bL_{01}\bSigma^T\bB_{11}^{-1}
	\end{matrix}\right]\right)
	\left[\begin{matrix}
	\bI_{n_x} &\\ & \bB_{11} \\
	\end{matrix}\right]$
	is also invertible.
\end{proof}


\begin{thebibliography}{10}
	
	\bibitem{adams2001discontinuous}
	{\sc M.~L. Adams}, {\em Discontinuous finite element transport solutions in
		thick diffusive problems}, Nucl. Sci. Engrg., 137 (2001), pp.~298--333.
	
	\bibitem{agoshkov2012boundary}
	{\sc V.~Agoshkov}, {\em Boundary Value Problems for Transport Equations},
	Springer Science \& Business Media, 2012.
	
	\bibitem{arnold2002unified}
	{\sc D.~N. Arnold, F.~Brezzi, B.~Cockburn, and L.~D. Marini}, {\em Unified
		analysis of discontinuous {G}alerkin methods for elliptic problems}, SIAM J.
	Numer. Anal., 39 (2002), pp.~1749--1779.
	
	\bibitem{babuvska1992locking}
	{\sc I.~Babu{\v{s}}ka and M.~Suri}, {\em On locking and robustness in the
		finite element method}, SIAM J. Numer. Anal., 29 (1992), pp.~1261--1293.
	
	\bibitem{bardos1984diffusion}
	{\sc C.~Bardos, R.~Santos, and R.~Sentis}, {\em Diffusion approximation and
		computation of the critical size}, Trans. Amer. Math. Soc., 284 (1984),
	pp.~617--649.
	
	\bibitem{bensoussan1979boundary}
	{\sc A.~Bensoussan, J.~L. Lions, and G.~C. Papanicolaou}, {\em Boundary layers
		and homogenization of transport processes}, Publ. Res. Inst. Math. Sci., 15
	(1979), pp.~53--157.
	
	\bibitem{cockburn2001runge}
	{\sc B.~Cockburn and C.-W. Shu}, {\em Runge--{K}utta discontinuous {G}alerkin
		methods for convection-dominated problems}, J. Sci. Comput., 16 (2001),
	pp.~173--261.
	
	\bibitem{Dautray-Lions-2000}
	{\sc R.~Dautray and J.~L. Lions}, {\em Mathematical Analysis and Numerical
		Methods for Science and Technology, Volume 6: Evolution Problems II},
	Spinger-Verlag, Berlin, 2000.
	
	\bibitem{Davison-1957}
	{\sc B.~Davison}, {\em Neutron Transport Theory}, Oxford University Press,
	London, 1973.
	
	\bibitem{egger2012mixed}
	{\sc H.~Egger and M.~Schlottbom}, {\em A mixed variational framework for the
		radiative transfer equation}, Mathematical Models and Methods in Applied
	Sciences, 22 (2012), p.~1150014.
	
	\bibitem{golub2012matrix}
	{\sc G.~H. Golub and C.~F. Van~Loan}, {\em Matrix Computations}, Johns Hopkins
	University Press, 3~ed., 2012.
	
	\bibitem{graziani2006computational}
	{\sc F.~Graziani}, {\em Computational Methods in Transport}, vol.~48, Springer,
	2006.
	
	\bibitem{guermond2010asymptotic}
	{\sc J.-L. Guermond and G.~Kanschat}, {\em Asymptotic analysis of upwind
		discontinuous {G}alerkin approximation of the radiative transport equation in
		the diffusive limit}, SIAM J. Numer. Anal., 48 (2010), pp.~53--78.
	
	\bibitem{guermond2014discontinuous}
	{\sc J.-L. Guermond, G.~Kanschat, and J.~C. Ragusa}, {\em Discontinuous
		{G}alerkin for the radiative transport equation}, in Recent Developments in
	Discontinuous {G}alerkin Finite Element Methods for Partial Differential
	Equations, Springer, 2014, pp.~181--193.
	
	\bibitem{guermond2019positive}
	{\sc J.-L. Guermond, B.~Popov, and J.~Ragusa}, {\em Positive asymptotic
		preserving approximation of the radiation transport equation}, arXiv preprint
	arXiv:1905.03390,  (2019).
	
	\bibitem{Habetler-Matkowsky-1975}
	{\sc G.~J. {Habetler} and B.~J. {Matkowsky}}, {\em {Uniform asymptotic
			expansions in transport theory with small mean free paths, and the diffusion
			approximation}}, J. Math. Phys., 16 (1975), pp.~846--854.
	
	\bibitem{HHE2010}
	{\sc W.~Han, J.~Huang, and J.~Eichholz}, {\em Discrete-ordinate discontinuous
		{G}alerkin methods for solving the radiative transfer equation}, SIAM J. Sci.
	Comput., 32 (2010), pp.~477--497.
	
	\bibitem{heningburg2019hybrid}
	{\sc V.~Heningburg and C.~Hauck}, {\em Hybrid solver for the radiative
		transport equation using finite volume and discontinuous {G}alerkin},
	submitted,  (2019).
	
	\bibitem{jin1999efficient}
	{\sc S.~Jin}, {\em Efficient asymptotic-preserving ({A}{P}) schemes for some
		multiscale kinetic equations}, SIAM J. Sci. Comput., 21 (1999), pp.~441--454.
	
	\bibitem{jin1996numerical}
	{\sc S.~Jin and C.~D. Levermore}, {\em Numerical schemes for hyperbolic
		conservation laws with stiff relaxation terms}, J. Comput. Phys., 126 (1996),
	pp.~449--467.
	
	\bibitem{larsen1989asymptotic}
	{\sc E.~Larsen and J.~Morel}, {\em Asymptotic solutions of numerical transport
		problems in optically thick, diffusive regimes {I}{I}}, J. Comput. Phys., 83
	(1989), pp.~212--236.
	
	\bibitem{larsen1987asymptotic}
	{\sc E.~W. Larsen, J.~Morel, and W.~F. Miller}, {\em Asymptotic solutions of
		numerical transport problems in optically thick, diffusive regimes}, J.
	Comput. Phys., 69 (1987), pp.~283--324.
	
	\bibitem{larsen2010advances}
	{\sc E.~W. Larsen and J.~E. Morel}, {\em Advances in discrete-ordinates
		methodology}, in Nuclear Computational Science, Springer, 2010, pp.~1--84.
	
	\bibitem{lesaint1974finite}
	{\sc P.~Lesaint and P.~A. Raviart}, {\em On a finite element method for solving
		the neutron transport equation}, in Mathematical Aspects of Finite Elements
	in Partial Differential Equations, Proceedings of a Symposium Conducted by
	the Mathematics Research Center, the University of Wisconsin-Madison,
	Madison, WI, USA, 1974, pp.~1--3.
	
	\bibitem{Lewis-Miller-1984}
	{\sc E.~E. Lewis and J.~W.~F.~Miller}, {\em Computational Methods in Neutron
		Transport}, John Wiley and Sons, New York, 1984.
	
	\bibitem{Mihalis-Mihalis-1999}
	{\sc D.~Mihalis and B.~Weibel-Mihalis}, {\em Foundations of Radiation
		Hydrodynamics}, Dover, Mineola, New York, 1999.
	
	\bibitem{Pomraning-1973}
	{\sc G.~C. Pomraning}, {\em Radiation Hydrodynamics}, Pergamon Press, New York,
	1973.
	
	\bibitem{ragusa2012robust}
	{\sc J.~C. Ragusa, J.-L. Guermond, and G.~Kanschat}, {\em A robust
		${S}_{N}$-{D}{G}-approximation for radiation transport in optically thick and
		diffusive regimes}, J. Comput. Phys., 231 (2012), pp.~1947--1962.
	
	\bibitem{reed1973triangular}
	{\sc W.~H. Reed and T.~Hill}, {\em Triangular mesh methods for the neutron
		transport equation}, tech. report, Los Alamos Scientific Lab., N. Mex.(USA),
	1973.
	
	\bibitem{wu2015geometric}
	{\sc L.~Wu and Y.~Guo}, {\em Geometric correction for diffusive expansion of
		steady neutron transport equation}, Commun. Math. Phys., 336 (2015),
	pp.~1473--1553.
	
	\bibitem{xu2010local}
	{\sc Y.~Xu and C.-W. Shu}, {\em Local discontinuous {G}alerkin methods for
		high-order time-dependent partial differential equations}, Commun. Comput.
	Phys., 7 (2010), pp.~1--46.
	
	\bibitem{yan2002local}
	{\sc J.~Yan and C.-W. Shu}, {\em Local discontinuous {G}alerkin methods for
		partial differential equations with higher order derivatives}, J. Sci.
	Comput., 17 (2002), pp.~27--47.
	
\end{thebibliography}
\bibliographystyle{siamplain}

\end{document}